\documentclass[11pt]{amsart}
\usepackage{amssymb}
\usepackage{amsmath}
\usepackage{amsfonts,latexsym,amstext}
\usepackage[foot]{amsaddr}

\usepackage{color}
\usepackage{bbm}

\newtheorem{theorem}{Theorem}[section]
\newtheorem{definition}[theorem]{Definition}

\newtheorem{lemma}[theorem]{Lemma}
\newtheorem{proposition}[theorem]{Proposition}
\newtheorem{corollary}[theorem]{Corollary}
\newtheorem{remark}[theorem]{Remark}
\newtheorem{assump}[theorem]{Assumptions}

\pagestyle{headings}

\newcommand{\R}{{\mathbb R}}
\newcommand{\C}{{\mathcal C}}

\newcommand{\Z}{{\mathbb{Z}}}

\newcommand{\N}{{\mathbb N}}

\newcommand{\F}{{\mathcal F}}

\newcommand{\E}{{\mathbb E}}

\newcommand{\B}{{\mathcal B}}

\def\R{\mathbb{R}}
\def\N{\mathbb{N}}
\def\E{\mathbb{E}}

\def\F{\mathcal F}

\setlength{\oddsidemargin}{-0.5cm} \setlength{\topmargin}{-1cm}
\setlength{\evensidemargin}{-1.1cm} \setlength{\textwidth}{16.3cm}
\setlength{\textheight}{24.2cm}

\title[LLN in $L^2$ for supercritical branching Markov processes]{On laws of large numbers in $L^2$ for supercritical branching Markov processes beyond $\lambda$-positivity}

\author{Matthieu Jonckheere$^*$ and Santiago Saglietti$^\dagger$}

\address[$*$]{Departamento de Matem\'atica, Universidad de Buenos Aires, 1428 Buenos Aires, Argentina.\newline email:\,{\tt mjonckhe@dm.uba.ar.}}

\address[$\dagger$]{Faculty of Industrial Engineering and Management, Technion, 3200003 Haifa, Israel.\newline email:\,{\tt saglietti.s@technion.ac.il.}} 






\begin{document}
\maketitle

\begin{abstract}
We give necessary and sufficient conditions for laws of large numbers to hold in $L^2$ for the empirical measure of a large class of branching Markov processes, including $\lambda$-positive systems but also some $\lambda$-transient ones, such as the branching Brownian motion with drift and absorption at $0$. This is a significant improvement over previous results on this matter, which had only dealt so far with $\lambda$-positive systems. Our approach is purely probabilistic and is based on spinal decompositions and many-to-few lemmas. In addition, we characterize when the limit in question is always strictly positive on the event of survival, and use this characterization to derive a simple method for simulating (quasi-)stationary distributions.
\end{abstract}

\section{Introduction}

Since the late seventies there has been an intensive effort of research dedicated to proving laws of large numbers for branching processes. Given a (continuous time) branching process $\xi=(\xi_t)_{t \geq 0}$ whose particles (or individuals) live on some measurable space $(J,\B_J)$, by a law of large numbers we shall understand the following: there exists a nonempty class $\mathcal{C} \subseteq \B_J$, a  measure $\nu$ on $(J,\B_J)$ and a random variable $D_\infty$ such that for all pairs $B,B' \in \mathcal{C}$ with $\nu(B') \neq 0$
\begin{equation} \label{eq:LLN}
\frac{\xi_t(B)}{\E(\xi_t(B'))} \longrightarrow \frac{\nu(B)}{\nu(B')} \cdot D_\infty
\end{equation} where, for $A \in \B_J$, we denote by $\xi_t(A)$ the number of particles of $\xi$ inside the set $A$ at time $t$. 
The earliest results in this regard can be found in \cite{moy1967,watanabe1967,asmussen1976,athreya2}. Later, the renovated approach in \cite{kurtz1997} introducing the notion of spine for the \mbox{branching process} sprouted a multitude of new results, see for instance \cite{englander2010,harris2014,harris2015}. In the recent works \cite{ChenShio2007,chen2016}, functional analytic methods were used to obtain results in the setting of branching symmetric Hunt processes. 
Laws of large numbers were also investigated in the related context of superprocesses, see  ~\cite{EngTur2002,englander2006law,Chen2015,ChenRenWang2008,LiuRenSong2013}. For a more detailed overview of past results and recent developments on these matters, we refer the interested reader to \cite{englander2014spatial,englandersurvey}. See also \cite[Section 2.5]{JonckSag}.

Whenever the convergence in \eqref{eq:LLN} is understood in the $L^2$-sense, all previous results so far require the branching process $\xi$ to be $\lambda$-\textit{positive}. Essentially, $\lambda$-positivity means that the motion of a certain spine describing the genealogy of the branching process (which is sometimes referred to in the literature as the \textit{immortal particle}) is positive recurrent, a property which proves crucial in all of the approaches developed until now. 
However, recent questions steaming from particle systems demand for a better understanding of branching processes which fail to be $\lambda$-positive. Indeed, there exists a large body of literature studying empirical measures of population models with mutations and selection  \cite{BD,BD2,BHM,BG,FM,AFGJ,M,DR,GJ}, for which hydrodynamic limits were obtained on finite time windows \cite{V,AFG,BG,DR}. However, it is still an open problem in many of these systems to obtain scaling limits of the empirical measure in its stationary regime, see \cite{AFGJ,GJ,M}. Different couplings with branching Markov processes have been proposed to study this problem, but typically the resulting branching process is not $\lambda$-positive, see \cite{AFGJ,M}. Thus, this highlights the need of a convergence theory for empirical measures of branching Markov processes going past this assumption of $\lambda$-positivity. 
A canonical example appearing in this context is the Branching Brownian Motion with drift and absorption at $0$. Kesten introduced this model in his paper \cite{kesten1978} from 1978 and stated there that a strong law of large numbers holds for this process whenever it is supercritical, although he did not provide a proof of this fact nor did he made any similar assertions regarding $L^2$-convergence. The validity of Kesten's claim was only very recently proved in \cite{louidor2017strong}, almost forty years later, building upon the results from the present article. As for the case with no drift, which is also not $\lambda$-positive, a strong law was first established in \cite{watanabe1967} and its weak analogue for super-Brownian Motion was then obtained in \cite{englander2009law}, though neither of these works address the question of convergence in $L^2$.

Once a law of large numbers as in \eqref{eq:LLN} is established, it is then of particular interest to determine whether $D_\infty > 0$ holds almost surely in the event the branching process survives forever (assuming it is supercritical, so that there is a positive probability that this occurs). Indeed, assuming that $J \in \mathcal{C}$ and $\nu(J)=1$, it is usually simple to check that the former statement implies the following convergence for the empirical measure associated with $\xi$: conditionally on the event of survival, for all $B \in \mathcal{C}$ one has that
\begin{equation} \label{eq:LLN2}
\frac{\xi_t(B)}{\xi_t(J)} \longrightarrow \nu(B).
\end{equation} In turn, we see that whenever $D_\infty > 0$ holds almost surely in the event of survival, we obtain a more complete description of the asymptotic behavior of the branching process as $t \rightarrow +\infty$:
\begin{itemize}
	\item [i.] For any $B \in \mathcal{C}$ with $\nu(B) \neq 0$, the number of particles $\xi_t(B)$ in $B$ grows like its expectation (times $D_\infty$, which acts as a random scale constant).
	\item [ii.] If $J \in \mathcal{C}$ and $\nu(J)=1$, the proportion of particles $\frac{\xi_t(B)}{\xi_t(J)}$ inside any given set $B \in \mathcal{C}$ behaves asymptotically as $\nu$.
\end{itemize}
Furthermore, we also see that \eqref{eq:LLN2} yields a simple and direct method to simulate the distribution $\nu$ which, in many cases, might not be known explicitly. However, showing that $D_\infty > 0$ in the event of survival is a subtle question (and, in fact, it is not always true, see Section \ref{sec:tou} for example) and the literature addressing this matter is very limited when dealing with more involved situations with absorption and/or infinite state spaces, see \cite{harris2009,harris2006v1} for some specific examples.

Our contribution in this article is two-fold. First, we derive a necessary and sufficient condition for laws of large numbers to hold in $L^2$ for a wide class of supercritical branching Markov processes having a constant branching rate and an offspring distribution with a finite second moment, which includes many $\lambda$-positive systems but also other highly relevant examples without this property.
More precisely, we show that, whenever the immortal particle process mentioned above (which is well-defined even for non-$\lambda$-positive systems) has a distribution which is of regular variation as $t$ tends to infinity (see Section \ref{sec:main1} for details), then \eqref{eq:LLN} holds in $L^2$ if and only if a specific additive martingale associated with $\xi$ is bounded in $L^2$. Furthermore, we show that, in the latter case, $D_\infty$ is precisely the $L^2$-limit of this martingale. Finally, we also obtain an explicit formula for the asymptotic variance of this martingale, so that one can determine whether it is indeed bounded in $L^2$ by performing a direct computation.
Our approach is purely probabilistic and based solely on simple spinal decomposition techniques, namely the ``many-to-one'' and ``many-to-two'' lemmas, which allow us to effectively control the particle correlations as time tends to infinity. 
 
We then focus on studying conditions which guarantee that $D_\infty > 0$ almost surely upon survival, i.e. $P(D_\infty>0|\text{survival})=1$ (notice that this is in fact stronger than just non-degeneracy of $D_\infty$, which only amounts to having $P(D_\infty>0|\text{survival})>0$). We show that, whenever \eqref{eq:LLN} holds in $L^2$, the equality $P(D_\infty>0|\text{survival})=1$ is equivalent to the process $\xi$ being \textit{strongly supercritical}: this means that, in the event of survival, particles of the process can never accumulate all together on the boundary of the state space. 
 This notion of strong supercriticality is related to the concept of \textit{strong local survival} studied in \cite{bertacchi2014} and references mentioned therein, although we are not aware of any previous connections made between this and the strict positivity of $D_\infty$ on survival. 

Finally, we illustrate our results through a series of examples. First, we show that any \mbox{$\lambda$-positive} system whose associated immortal particle relaxes sufficiently fast to equilibrium (i.e. it admits a geometric Lyapunov functional growing sufficiently fast at infinity) verifies our hypotheses and thus satisfies a law of large numbers in $L^2$.
As a matter of fact, we also show that, if in addition this Lyapunov functional does not grow too fast, then the almost sure convergence holds as well. 
Afterwards, we use this to obtain laws of large numbers for several classic $\lambda$-positive systems: branching ergodic motions, branching Galton-Watson processes, branching contact processes and branching inward/outward Ornstein-Uhlenbeck processes. Finally, we study the emblematic case of the Branching Brownian Motion with drift and absorption at $0$ presented in \cite{kesten1978}, which is not $\lambda$-positive. For this system we completely characterize the region of parameters for which a law of large numbers holds in $L^2$ and, in particular, show that it is strictly smaller than the region of parameters for which the process is supercritical.    

The rest of the article is structured as follows. In Section 2 we describe the basic setup and notation, and then state our main results. 
Section 3 discusses several examples and applications. In Section 4 we recall the many-to-few lemma, which constitutes one of the main tools of our analysis, while Sections 5, 6, 7 and 8 contain the proofs of our main results. In order to shorten the exposition, several details of the proofs and examples as well as a detailed review of previous results are available in an extended previous preprint version, \cite{JonckSag}.

\section{Preliminaries and main results}\label{sec:results}

\subsection{Preliminaries}

Let $X=(X_t)_{t \geq 0}$ be a homogeneous Markov process with cadlag trajectories on some metric space $\overline{J}$. We will assume throughout that:
\begin{enumerate}
	\item [$\bullet$] $X$ is allowed to have \textit{absorbing states}, i.e. $x \in \overline{J}$ such that, whenever $X_t = x$ for some $t$, one has $X_{s}=x$ for all times $s>t$.
	\item [$\bullet$] The set $\partial_* \overline{J}$ of all absorbing states of $X$ belongs to $\B$, the Borel $\sigma$-algebra of $\overline{J}$.
	\item [$\bullet$] $J:=\overline{J}-\partial_* \overline{J}$ is locally compact and separable. 
\end{enumerate}  
Now, consider the following branching dynamics:
\begin{enumerate}
	\item [i.] The dynamics starts with a single particle, located initially at some $x \in \overline{J}$, whose position evolves randomly according to $\mathcal{L}$, the infinitesimal generator of $X$. 
	\item [ii.] This initial particle branches at rate $r > 0$, dying and being replaced at its current position by an independent random number of particles $m$, taking values in $\N_0$.
	\item [iii.] Starting from their birth position, now each of these new particles independently mimics the same stochastic behavior of its parent.
	\item [iv.] If a particle has $0$ children, then it dies and moves to a graveyard state $\Delta$ forever.
\end{enumerate}
Given any time $t \geq 0$, for each particle $u$ present in the dynamics at time $t$ we write $u_t$ to indicate its position at time $t$. Also, we let $\overline{\chi}_t$ denote the collection of particles in the branching dynamics which are alive at time $t$, i.e. $u_t \notin \Delta$. We identify $\overline{\chi}_t$ with a finite measure $\chi_t$ on $(\overline{J},\B)$ by setting 
$$
\chi_t := \sum_{u \in \overline{\chi}_t} \delta_{u_t}.
$$ Furthermore, let $\overline{\xi}_t$ denote the collection of particles $u$ in $\overline{\chi}_t$ which have not been absorbed yet, i.e. such that $u_t \in J$, and define its induced measure $\xi_t$ on $(J,\B_J)$ as
$$
\xi_t = \sum_{u \in \overline{\xi}_t} \delta_{u_t}.
$$ Finally, we write $|\xi_t|:=\xi_t(J)$ for the total mass of $\xi_t$, i.e. the number of living particles at time $t$ which have not been absorbed yet, and define the \textit{empirical measure} $\nu_t$ as
$$
\nu_t := \frac{1}{|\xi_t|} \cdot \xi_t 
$$ with the convention that $\infty \cdot 0 = 0$, used whenever $|\xi_t|= 0$.

Throughout the rest of the article we will use the subscript $x$ in the notation, e.g. in $P_x$ or $\E_x$, to indicate that the process involved in the corresponding probability or \mbox{expectation starts at $x$.} Similarly, the superscript $x$, e.g. in $\xi_t^{(x)}$ or $X_t^{(x)}$, indicates the corresponding process starts at $x$. 

\subsection{A necessary and sufficient condition for laws of large numbers in $L^2$}\label{sec:main1}

We now begin to present and discuss our main results. 
Before we can do so, however, we must introduce some assumptions on our branching dynamics. Our initial assumptions on the underlying motion $X$ are the following.

\begin{assump} \label{assumpG} $\,$
	\begin{enumerate}
		\item [A1.] There exists $\lambda \geq 0$ and a nonnegative $\B$-measurable function $h:\overline{J} \rightarrow \R_{\geq 0}$ such that:
		\begin{itemize}
			\item [i.] $h(x)=0$ if and only if $x \in \partial_* \overline{J}$.
			\item [ii.] For every $x \in J$ the process $M^{(x)}=(M^{(x)}_t)_{t \geq 0}$ given by the formula
			$$
			M_t^{(x)} := \frac{h(X_t^{(x)})}{h(x)}e^{\lambda t},
			$$ is a (mean-one) square-integrable martingale, i.e. $-\lambda$ is a (right) eigenvalue of $\mathcal{L}$ with associated eigenfunction $h$ satisfying $\E_x(h^2(X_t)) < +\infty$ for all $t \geq 0$.
		\end{itemize}
		
		\item [A2.] There exists a nonempty class of subsets $\mathcal{C}_X \subseteq \B_J$ such that for each $x \in \overline{J}$ and $B \in \mathcal{C}_X$ one has the asymptotic formula
		\begin{equation}\label{A2}
		P_x( X_t \in B ) = h(x)p(t)e^{-\lambda t}(\nu(B) + s_B(x,t)),
		\end{equation} for all $t > 0$, where $\lambda$ and $h$ are those from (A1) and:
		\begin{itemize}
			\item [i.] $\nu$ is a (non-necessarily finite) measure on $(J,\B_J)$ satisfying:
			\begin{enumerate}
				\item [$\bullet$] $\nu(B) \in [0,+\infty)$ for all $B \in \mathcal{C}_X$,
				\item [$\bullet$] there exists at least one $B' \in \mathcal{C}_X$ such that $\nu(B')>0$.
			\end{enumerate}
			\item [ii.] $p(t)$ is a regularly varying function at infinity, i.e. a function $p: (0,+\infty) \rightarrow (0,+\infty)$ such that the limit
			$$
			\ell(a):=\lim_{t \rightarrow +\infty} \frac{p(at)}{p(t)} 
			$$ exists and is finite for all $a > 0$. 
			\item [iii.] $s_B(\cdot,t)$ converges to zero as $t \rightarrow +\infty$ uniformly over 
			$J_n:=\{x \in J : \frac{1}{n}\leq h(x) \leq n\}$ for each $n \in \N$.
			
			\item [iv.] There exist $t_0,\overline{s}_B > 0$ such that $\sup_{x \in \overline{J}}s_B(x,t) \leq \overline{s}_B$ for all $t > t_0$.
			\end{itemize} 
	\end{enumerate}
\end{assump}

\begin{remark}\label{rem:ip} Let us observe that if for $x \in J$ we consider the martingale change of measure $\tilde{P}_x$ (known in the literature as $h$-transform) given by 
	\begin{equation} \label{eq:mcof}
	\frac{d\tilde{P}_x}{dP_x}\bigg|_{\F_t^{(x)}} = M_t^{(x)},
	\end{equation} where $(\F_t^{(x)})_{t \geq 0}$ denotes the filtration generated by $X^{(x)}$, and also define the measure $\mu$ on $(J,\B_J)$ via the formula
	\begin{equation} \label{eq:defmu}
	\frac{d\mu}{d\nu}=h,
	\end{equation}then 
	\begin{equation}
	\label{eq:fasymp}
	P_x( X_t \in B) = \E_x (\mathbbm{1}_{B}(X_t)) = h(x)e^{-\lambda t} \tilde{\E}_x\left( \frac{\mathbbm{1}_B}{h}(X_t)\right) = h(x)p(t)e^{-\lambda t}\left( \nu(B) + s_B(x,t)\right)
	\end{equation} where $\tilde{\E}_x$ denotes expectation with respect to the measure $\tilde{P}_x$ and $s_B$ is given by 
\begin{equation}
\label{eq:defsb}
s_B(x,t):= \frac{1}{p(t)}\tilde{\E}_x\left( \frac{\mathbbm{1}_B}{h}(X_t)\right) - \nu(B) = \frac{1}{p(t)}\tilde{\E}_x\left( \frac{\mathbbm{1}_B}{h}(X_t)\right) - \mu\left(\frac{\mathbbm{1}_B}{h}\right).
\end{equation} Thus, (A2) can be reformulated as the assumption that there exists a regularly varying function $p$ at infinity such that the limit
\begin{equation}
\label{eq:plimit}
\lim_{t \rightarrow +\infty} \frac{1}{p(t)}\tilde{\E}_x\left( \frac{\mathbbm{1}_B}{h}(X_t)\right) 
\end{equation} exists for all $B \in \C_X$, is given by the non-trivial measure $\mu$ and, moreover, \mbox{is uniform over each $J_n$.} In particular, it follows that, for any $B \in \mathcal{C}_X$ such that $\nu(B) > 0$, the function 
$$
\ell^{(x)}_B(t):=\tilde{\E}_x\left( \frac{\mathbbm{1}_B}{h}(X_t)\right)
$$ is of regular variation at infinity. In conclusion, we may regard (A2) as requiring that, under $\tilde{P}_x$, the distribution of $X^{(x)}$ be (in some sense) of regular variation at infinity uniformly over each $J_n$. 
\end{remark}
	
Assumptions \ref{assumpG} are satisfied in many different situations. Indeed, as we will see in Section \ref{sec:rpos1}, the underlying motion of any $\lambda$-positive branching process (see Section \ref{sec:rpos1} for a precise definition) is a natural example of system verifying these conditions, and this already covers a wide range of possibilities: not only are ergodic motions in this category, but also certain transient systems as well as many examples of almost-surely absorbed motions having $\nu$ as their Yaglom limit. Moreover, as mentioned in the Introduction, Assumptions \ref{assumpG} are also satisfied by processes which are not $\lambda$-positive, see  \cite{watanabe1967,kesten1978} and Section \ref{sec:bbm} below for Brownian motion with nonpositive drift and absorption at $0$ and also \cite{KP1} for other examples of L\'evy processes satisfying these assumptions. 

On the other hand, we point out that the class $\mathcal{C}_X$ was introduced in Assumptions \ref{assumpG} because, in general, one cannot expect the asymptotics in \eqref{A2} to be valid for \textit{every} $B \in \B_J$, see \mbox{Section \ref{sec:tou}}. Nevertheless, even if this is not the case one can still produce convergence results which hold for all $B$ in this smaller class $\mathcal{C}_X$. Finally, we observe that in all the examples of Section \ref{sec:examples} the measure $\nu$ actually corresponds to the left eigenmeasure of the generator $\mathcal{L}$ associated to $-\lambda$. However, this fact will not be used throughout our analysis. 

Now, since we are interested in understanding the evolution in $L^2$ of the branching dynamics $\xi$ in the supercritical case in which $|\xi_t|$ remains positive for all times $t > 0$ with positive probability, we must also make the following assumptions on $m$ and $r$.

\begin{assump}\label{assumpG0} We shall assume throughout that the pair $(m,r)$ satisfies:	\begin{itemize}
		\item [I1.] $m_2:= \E(m^2) < +\infty$ and $m_1:= \E(m) > 1$.
		\item [I2.] $r(m_1-1) > \lambda$, where $\lambda$ is the parameter from Assumptions \ref{assumpG}.
	\end{itemize}
\end{assump} It follows from Assumptions \ref{assumpG0} and Lemmas \ref{lema:mt1}-\ref{lema:mt2} below that all second moments $\E_x(|\xi_t|^2)$ are well-defined for any $x \in J$ and $t \geq 0$, and also $\E_x(|\xi_t|) \rightarrow +\infty$ as $t \rightarrow +\infty$ for all $x$ as long as there exists at least one $B \in \mathcal{C}_X$ with $\nu(B)> 0$, which is precisely the case that interests us. 

Now, our first result is concerned with the $L^2$-convergence of the so-called \textit{Malthusian martingale} associated to our branching dynamics, which we define below.

\begin{definition} For any $x \in J$ we define the \textit{Malthusian martingale} $D^{(x)}=(D_t^{(x)})_{t \geq 0}$ as
	$$
	D_t^{(x)} := \frac{1}{h(x)}\sum_{u \in \overline{\xi}^{(x)}_t} h(u_t) e^{-(r(m_1-1)-\lambda)t}.
	$$
\end{definition} 
It follows from the many-to-one lemma in Section \ref{sec:mtf} and (A1) that $D^{(x)}$ is indeed a martingale. Furthermore, (A1) implies in fact that $D^{(x)}$ is square-integrable. Being also nonnegative, we know that there exists an almost sure limit $D^{(x)}_\infty$. Our first result is then the following.

\begin{theorem}\label{theo:main2} For every $x \in J$ we have that  
	$$
	\lim_{t \rightarrow +\infty} \E_x(D_t^2) = (m_2-m_1) \int_0^\infty \E_x(M_s^2) re^{-r(m_1-1)s}ds=:\Phi_x,
	$$ so that $D^{(x)}$ converges in $L^2$ to $D^{(x)}_\infty$ if and only if $\Phi_x < +\infty$. In this case, we have that
	$$
	\E_x(D_\infty)=1 \hspace{2cm}\text{ and }\hspace{2cm}\E_x(D_\infty^2)=\Phi_x.
	$$	
\end{theorem}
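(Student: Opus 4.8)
The plan is to establish the exact identity
\[
\E_x(D_t^2)=e^{-r(m_1-1)t}\,\E_x(M_t^2)+(m_2-m_1)\int_0^t r e^{-r(m_1-1)s}\,\E_x(M_s^2)\,ds
\]
for every finite $t\geq 0$ by computing the second moment of $D_t^{(x)}$ with the spinal decomposition lemmas of Section~\ref{sec:mtf}, and then to let $t\to+\infty$; only (A1) and Assumptions~\ref{assumpG0} are used, not (A2). First I would record the consequence of the many-to-one lemma together with the eigenfunction identity $\E_y(h(X_s))=h(y)e^{-\lambda s}$ from (A1): for every nonnegative $\B_J$-measurable $f$,
\[
\psi_\tau f(x):=\E_x\Big(\sum_{u\in\overline\xi_\tau}f(u_\tau)\Big)=e^{r(m_1-1)\tau}\,\E_x(f(X_\tau)),
\]
where absorbed particles contribute nothing since every test function below, like $h$, vanishes on $\partial_*\overline J$. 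In particular $\psi_\tau h(x)=h(x)e^{(r(m_1-1)-\lambda)\tau}$, which with $f=h$ re-proves $\E_x(D_t)=\E_x(M_t)=1$ and, through the branching Markov property, that $D^{(x)}$ is a nonnegative (hence a.s.\ convergent) square-integrable martingale; all quantities below are finite by (A1) and Assumptions~\ref{assumpG0}, or nonnegative, so Tonelli applies and the value $+\infty$ is harmless even if $h$ is unbounded.

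Writing $c:=r(m_1-1)-\lambda$ and expanding the square,
\[
D_t^2=\frac{e^{-2ct}}{h(x)^2}\Big(\sum_{u\in\overline\xi_t}h(u_t)^2+\sum_{u\neq v\in\overline\xi_t}h(u_t)h(v_t)\Big),
\]
the diagonal term is handled by many-to-one with $f=h^2$: its contribution to $\E_x(D_t^2)$ equals $e^{-2ct}h(x)^{-2}e^{r(m_1-1)t}\E_x(h(X_t)^2)$, which, since $M_t^2=h(x)^{-2}h(X_t)^2e^{2\lambda t}$, simplifies to $e^{-r(m_1-1)t}\E_x(M_t^2)$. For the off-diagonal term I would invoke the many-to-two lemma, which expresses $\E_x\big(\sum_{u\neq v}h(u_t)h(v_t)\big)$ as $r(m_2-m_1)\int_0^t\psi_s\big[(\psi_{t-s}h)^2\big](x)\,ds$: going backwards, the pair $(u,v)$ shares one ancestral lineage until the time $s$ of its most recent common branching event, which occurs at rate $r$ and produces, in expectation, $\E(m(m-1))=m_2-m_1$ ordered pairs of distinct children, each founding an independent subtree. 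Using $\psi_{t-s}h(y)=h(y)e^{c(t-s)}$ and $\psi_s(h^2)(x)=e^{r(m_1-1)s}\E_x(h(X_s)^2)$ this equals $r(m_2-m_1)\int_0^t e^{2c(t-s)}e^{r(m_1-1)s}\E_x(h(X_s)^2)\,ds$, and multiplying by $e^{-2ct}h(x)^{-2}$ the dependence on $t-s$ disappears, leaving the off-diagonal contribution $(m_2-m_1)\int_0^t r e^{-r(m_1-1)s}\E_x(M_s^2)\,ds$. Adding the two contributions yields the displayed identity.

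It then remains to pass to the limit. The integral term is nondecreasing in $t$ and increases to $\Phi_x\in[0,+\infty]$. If $\Phi_x=+\infty$, then the diagonal term being nonnegative gives $\E_x(D_t^2)\to+\infty=\Phi_x$, so $D^{(x)}$ is not $L^2$-bounded and, being a martingale, cannot converge in $L^2$. If $\Phi_x<+\infty$, then using $m_2-m_1>0$ (which holds because $m_1>1$ forces $P(m\geq2)>0$) one gets $\int_0^\infty e^{-r(m_1-1)s}\E_x(M_s^2)\,ds<+\infty$, and since $s\mapsto\E_x(M_s^2)$ is nondecreasing (conditional Jensen for the martingale $M^{(x)}$) the bound $\tfrac1{r(m_1-1)}e^{-r(m_1-1)t}\E_x(M_t^2)\leq\int_t^\infty e^{-r(m_1-1)s}\E_x(M_s^2)\,ds$ (with $r(m_1-1)>0$) forces $e^{-r(m_1-1)t}\E_x(M_t^2)\to0$; hence $\E_x(D_t^2)\to\Phi_x$. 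In that case $\sup_t\E_x(D_t^2)=\lim_t\E_x(D_t^2)=\Phi_x<+\infty$ (the supremum is the limit since $t\mapsto\E_x(D_t^2)$ is nondecreasing), so $D^{(x)}$ is an $L^2$-bounded martingale, converges to $D^{(x)}_\infty$ both a.s.\ and in $L^2$, and consequently $\E_x(D_\infty)=\lim_t\E_x(D_t)=1$ and $\E_x(D_\infty^2)=\lim_t\E_x(D_t^2)=\Phi_x$. The step I expect to be the main obstacle is the many-to-two computation: fixing the combinatorial constant $r(m_2-m_1)$ and threading the eigenfunction relation $\psi_\tau h=h\,e^{c\tau}$ through the two independent subtrees and the ancestral spine so that every exponential factor involving $t-s$ cancels and only the clean kernel $r e^{-r(m_1-1)s}\E_x(M_s^2)$ survives, while making sure these manipulations are legitimate with a possibly unbounded $h$ — which here is not an issue, as everything in sight is nonnegative. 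The remaining steps are routine $L^2$-martingale theory.
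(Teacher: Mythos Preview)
Your proof is correct and follows essentially the same route as the paper's: both compute $\E_x(D_t^2)$ exactly via the many-to-few machinery of Section~\ref{sec:mtf} and arrive at the identity $\E_x(D_t^2)=\E_x(M_t^2)e^{-r(m_1-1)t}+(m_2-m_1)\int_0^t \E_x(M_s^2)re^{-r(m_1-1)s}ds$, then pass to the limit. The only cosmetic differences are that the paper applies Lemma~\ref{lema:mt2} to the full double sum and splits the $2$-spine expectation on $\{E>t\}$ versus $\{E\leq t\}$ (which yields the same two terms as your diagonal/off-diagonal split), and that for the vanishing of the first term when $\Phi_x<+\infty$ the paper argues $\liminf_t [1]_t=0$ and then invokes the existence of $\lim_t\E_x(D_t^2)$ via the submartingale property of $(D_t^{(x)})^2$, whereas your monotonicity bound $\tfrac{1}{r(m_1-1)}e^{-r(m_1-1)t}\E_x(M_t^2)\leq\int_t^\infty e^{-r(m_1-1)s}\E_x(M_s^2)\,ds$ gives $[1]_t\to 0$ directly.
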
 

We should point out that the $\Phi_x < +\infty$ condition is not trivial under Assumptions \ref{assumpG}, so that $L^2$-convergence may not always hold, see e.g. Section \ref{sec:bbm}. However, the $L^2$-convergence of $D^{(x)}$ is crucial as it dictates the validity of a law of large numbers in $L^2$ for $\xi$, as our next result shows.

\begin{theorem}\label{theo:main}
	If for $x \in J$ and $B,B' \in \mathcal{C}_X$ with $\nu(B') > 0$ we define  $W^{(x)}(B,B')=(W^{(x)}_t(B,B'))_{t \geq 0}$ by the formula
	$$
W^{(x)}_t(B,B') := \frac{\xi^{(x)}_t(B)}{\E_x(\xi_t(B'))}
$$ (which is well-defined for $t$ large enough since $\liminf_{t \rightarrow +\infty} \E_x(\xi_t(B')) > 0$ by Lemma \ref{lema:mt1} and \eqref{A2}), then the following holds:
\begin{enumerate}
	\item [i.] The sequence $W^{(x)}(B,B')$ satisfies 
	$$
	\lim_{t \rightarrow +\infty} \E_x(W^2_t(B,B'))= \left[\frac{\nu(B)}{\nu(B')}\right]^2 \Phi_x.
	$$ In particular, it is bounded in $L^2$ if and only if $\Phi_x < +\infty$. 
	\item [ii.] If $W^{(x)}(B,B')$ is bounded in $L^2$ then we have that as $t \rightarrow +\infty$
	\begin{equation} \label{eq:conv1}
	W^{(x)}_t(B,B') \overset{L^2}{\longrightarrow} \frac{\nu(B)}{\nu(B')} \cdot D_\infty^{(x)}.
    \end{equation}
    In particular, conditionally on the event $\{D^{(x)}_\infty > 0\}$, we have that as $t \rightarrow +\infty$ 
    \begin{equation} \label{eq:ks2}
    \nu^{(x)}_t(B,B'):=\frac{\xi^{(x)}_t(B)}{\xi^{(x)}_t(B')} \overset{P}{\longrightarrow} \frac{\nu(B)}{\nu(B')}.
    \end{equation}
 \end{enumerate} 
       
\end{theorem}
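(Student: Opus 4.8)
The plan is to reduce everything to two moment identities from Section~\ref{sec:mtf} together with Theorem~\ref{theo:main2}. By the many-to-one lemma (Lemma~\ref{lema:mt1}), $\E_x(\xi_t(A))=e^{r(m_1-1)t}P_x(X_t\in A)$ for every $A\in\B_J$, so \eqref{A2} gives $\E_x(\xi_t(B))=e^{r(m_1-1)t}h(x)p(t)e^{-\lambda t}(\nu(B)+s_B(x,t))$ and likewise for $B'$; in particular $\liminf_t\E_x(\xi_t(B'))>0$ and $\E_x(\xi_t(B))/\E_x(\xi_t(B'))\to\nu(B)/\nu(B')$. Also $\E_x(\xi_t(h))=h(x)e^{(r(m_1-1)-\lambda)t}$ by (A1), so that $D^{(x)}_t=\xi_t(h)/\E_x(\xi_t(h))$. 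By the many-to-two lemma (Lemma~\ref{lema:mt2}), for $f=\mathbbm{1}_B$ (and, more generally, for $\B$-measurable $f$ vanishing on $\partial_*\overline{J}$ with $\E_x(f(X_t)^2)<\infty$),
$$\E_x\big(\xi_t(f)^2\big)=e^{r(m_1-1)t}\,\E_x\big(f(X_t)^2\big)+r(m_2-m_1)\int_0^t e^{r(m_1-1)(2t-s)}\,\E_x\Big(\E_x\big(f(X_t)\mid\F_s^{(x)}\big)^2\Big)\,ds .$$
I would apply this with $f=\mathbbm{1}_B$ for part~(i) and, for part~(ii), with $f=\psi_t:=\frac{\mathbbm{1}_B}{\E_x(\xi_t(B'))}-\frac{\nu(B)}{\nu(B')}\frac{h}{\E_x(\xi_t(h))}$, observing that then $\xi_t(\psi_t)=W^{(x)}_t(B,B')-\frac{\nu(B)}{\nu(B')}D^{(x)}_t$; note $\E_x(\mathbbm{1}_B(X_t)\mid\F_s^{(x)})=P_{X_s}(X_{t-s}\in B)$ and $\E_x(h(X_t)\mid\F_s^{(x)})=h(X_s)e^{-\lambda(t-s)}$ by the Markov property and (A1).

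For part~(i), dividing the above identity (with $f=\mathbbm{1}_B$) by $(\E_x(\xi_t(B')))^2$ gives $\E_x(W_t^2)=$ ``diagonal'' $+$ ``off-diagonal'', where by \eqref{A2} the diagonal term $P_x(X_t\in B)/\big(e^{r(m_1-1)t}(P_x(X_t\in B'))^2\big)$ is a bounded quantity times $e^{-(r(m_1-1)-\lambda)t}/p(t)$ and hence $\to0$ (since $r(m_1-1)>\lambda$ by (I2) and $p$, being regularly varying, is sub-exponential), while the off-diagonal term equals $\frac{r(m_2-m_1)}{h(x)^2(\nu(B')+s_{B'}(x,t))^2}\int_0^t g_t(s)\,ds$ with $g_t(s):=e^{-(r(m_1-1)-2\lambda)s}\frac{p(t-s)^2}{p(t)^2}\E_x\big(h(X_s)^2(\nu(B)+s_B(X_s,t-s))^2\big)\ge0$. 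For each fixed $s$ one has $p(t-s)/p(t)\to1$ (standard for regularly varying $p$) and $\E_x\big(h(X_s)^2(\nu(B)+s_B(X_s,t-s))^2\big)\to\nu(B)^2\E_x(h(X_s)^2)$: split the expectation over $J_n=\{1/n\le h\le n\}$ and its complement, using on $J_n$ the uniform convergence of (A2)(iii) with the bound $h\le n$, and on $J_n^c$ the bound (A2)(iv) (so $0\le\nu(B)+s_B\le\nu(B)+\overline{s}_B$) together with $\E_x(h(X_s)^2;X_s\notin J_n)\to0$ as $n\to\infty$ — dominated convergence on the probability space, valid because $h>0$ exactly on $J$, $\bigcup_nJ_n=J$, and $\E_x(h(X_s)^2)<\infty$ by (A1). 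Hence $g_t(s)\to g_\infty(s):=\nu(B)^2e^{-(r(m_1-1)-2\lambda)s}\E_x(h(X_s)^2)$ with $\int_0^\infty g_\infty=\frac{\nu(B)^2h(x)^2}{r(m_2-m_1)}\Phi_x$. To exchange limit and integral: if $\Phi_x=+\infty$ (whence $\nu(B)>0$) Fatou gives $\int_0^tg_t\to+\infty$; if $\Phi_x<+\infty$ I would fix $\delta\in(0,1-\lambda/r(m_1-1))$ — possible by (I2) — and split $[0,t]=[0,\delta t]\cup[\delta t,(1-\delta)t]\cup[(1-\delta)t,t]$. On the first two pieces $t-s\ge\delta t$, so Potter's bounds give $p(t-s)/p(t)\le C_\delta$ uniformly for large $t$; thus $g_t\le C_\delta^2(\nu(B)+\overline{s}_B)^2e^{-(r(m_1-1)-2\lambda)s}\E_x(h(X_s)^2)$ there (for $t-s>t_0$, by (A2)(iv)), which is a fixed integrable function, so $\int_0^{\delta t}g_t\to\int_0^\infty g_\infty$ by dominated convergence and $\int_{\delta t}^{(1-\delta)t}g_t\le C_\delta^2(\nu(B)+\overline{s}_B)^2\int_{\delta t}^\infty e^{-(r(m_1-1)-2\lambda)s}\E_x(h(X_s)^2)\,ds\to0$. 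On $[(1-\delta)t,t]$ I would discard the fine structure and use only $\big(P_{X_s}(X_{t-s}\in B)\big)^2\le P_{X_s}(X_{t-s}\in B)$ and the tower property $\E_x\big(P_{X_s}(X_{t-s}\in B)\big)=P_x(X_t\in B)\le h(x)p(t)e^{-\lambda t}(\nu(B)+\overline{s}_B)$ to bound $\int_{(1-\delta)t}^tg_t$ by a constant times $e^{-(r(m_1-1)(1-\delta)-\lambda)t}/p(t)\to0$. Combining the three pieces proves $\E_x(W_t^2)\to(\nu(B)/\nu(B'))^2\Phi_x$, and the ``bounded in $L^2$ iff $\Phi_x<\infty$'' statement follows since each $\E_x(W_t^2)$ is finite by (I1) and the identity above.

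For part~(ii), assume $\Phi_x<+\infty$, so $W^{(x)}(B,B')$ is bounded in $L^2$ and, by Theorem~\ref{theo:main2}, $D^{(x)}_t\to D^{(x)}_\infty$ in $L^2$; it then suffices to show $\xi_t(\psi_t)\to0$ in $L^2$, which gives \eqref{eq:conv1}. Applying the many-to-two identity with $f=\psi_t$ and expanding via $\mathbbm{1}_B^2=\mathbbm{1}_B$, the diagonal term $e^{r(m_1-1)t}\E_x(\psi_t(X_t)^2)$ splits into the (vanishing) diagonal term of $\E_x(W_t^2)$, the term $\frac{\nu(B)^2}{\nu(B')^2}e^{-r(m_1-1)t}\E_x(M_t^2)$, and a cross term which by Cauchy--Schwarz and AM--GM is bounded by the sum of the other two; and $e^{-r(m_1-1)t}\E_x(M_t^2)\to0$ because $\E_x(D_t^2)=e^{-r(m_1-1)t}\E_x(M_t^2)+r(m_2-m_1)\int_0^t\E_x(M_s^2)e^{-r(m_1-1)s}\,ds\to\Phi_x$ while the integral $\to\Phi_x$ by monotone convergence. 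The off-diagonal term equals $\frac{r(m_2-m_1)}{h(x)^2}\int_0^t e^{-(r(m_1-1)-2\lambda)s}\E_x\big(h(X_s)^2\,\widetilde{\varepsilon}_t(X_s,s)^2\big)\,ds$ with $\widetilde{\varepsilon}_t(y,s):=\frac{p(t-s)}{p(t)}\cdot\frac{\nu(B)+s_B(y,t-s)}{\nu(B')+s_{B'}(x,t)}-\frac{\nu(B)}{\nu(B')}\to0$ pointwise, and it tends to $0$ by the same three-region argument, with the bound on $[(1-\delta)t,t]$ now coming from the identity $h(y)\widetilde{\varepsilon}_t(y,t-u)=\frac{e^{\lambda u}P_y(X_u\in B)}{p(t)(\nu(B')+s_{B'}(x,t))}-\frac{\nu(B)}{\nu(B')}h(y)$ together with $(P)^2\le P$, the tower property, and a tail estimate. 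This yields \eqref{eq:conv1}; finally, writing $\nu^{(x)}_t(B,B')=W^{(x)}_t(B,B')/W^{(x)}_t(B',B')$, both quantities converge in $L^2$ — hence in probability — to $\frac{\nu(B)}{\nu(B')}D^{(x)}_\infty$ and $D^{(x)}_\infty$ respectively, so on $\{D^{(x)}_\infty>0\}$ their ratio converges in probability to $\nu(B)/\nu(B')$, which is \eqref{eq:ks2}.

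The step I expect to be the main obstacle is the limit-exchange in the two many-to-two integrals: one cannot dominate $g_t$ (or the corresponding integrand in part~(ii)) uniformly in $s\in[0,t]$, since the ratio $p(t-s)/p(t)$ of the regularly varying function is uncontrolled as $s\uparrow t$. The remedy is exactly the three-region split at $\delta t$ with $\delta$ a small but fixed constant below $1-\lambda/r(m_1-1)$: this forces the troublesome region $[(1-\delta)t,t]$ to be exponentially suppressed — which is precisely where Assumption~(I2) enters in an essential way — while on the complement Potter's bounds restore a uniform integrable domination, so that dominated (resp.\ monotone) convergence applies.
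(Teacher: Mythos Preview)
Your proposal is correct and follows essentially the same approach as the paper: the same many-to-few computations, the same three-region split of the off-diagonal integral (your $[0,\delta t]\cup[\delta t,(1-\delta)t]\cup[(1-\delta)t,t]$ is a reparametrization of the paper's $[0,T]\cup[T,\alpha t]\cup[\alpha t,t]$), the same use of regular-variation bounds (Potter's bounds versus the paper's Lemma~\ref{lema:p}), and the same $J_n/J_n^c$ decomposition via (A2)(iii)--(iv). The only organizational difference is in part~(ii): you apply the many-to-two identity directly to $\psi_t$ to bound $\|W_t-\tfrac{\nu(B)}{\nu(B')}D_t\|_{L^2}^2$ in one stroke, whereas the paper expands this square and shows $\E_x(W_tD_t)\to\tfrac{\nu(B)}{\nu(B')}\Phi_x$ separately --- these are equivalent via polarization. (One small slip: the parenthetical ``whence $\nu(B)>0$'' after $\Phi_x=+\infty$ is not a logical consequence, since $\Phi_x$ does not depend on $B$; the divergence claim in part~(i) indeed needs $\nu(B)>0$, which the paper also tacitly assumes there.)
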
 

We note that, due to the presence of absorption, $W^{(x)}(B,B')$ will not be a martingale in general, so that the existence of an limit in $L^2$ whenever it is bounded is by no means a trivial statement. 
Still, the main idea behind Theorem \ref{theo:main} is that, whenever $W^{(x)}(B,B')$ is $L^2$-bounded, it behaves asymptotically as the martingale $\frac{\nu(B)}{\nu(B')}\cdot D^{(x)}$ and thus it must also converge.

\subsection{Strict positivity of $D^{(x)}_\infty$ on the event of non-extinction}\label{sec:strict}

Observe that for every $x \in J$ the event $\Lambda^{(x)}:=\{D^{(x)}_\infty > 0\}$ is contained in the event of non-extinction \mbox{$\Theta^{(x)}:=\{ |\xi_t| > 0 \text{ for all }t\}$.} Ideally, we would like both events to be almost surely equal, i.e. 
\begin{equation}\label{eq:eq}
P_x( \Lambda | \Theta )=1,
\end{equation} since, in that case, Theorem \ref{theo:main} would tell us that, almost surely on the event of non-extinction, for any $B \in \mathcal{C}_X$ with $\nu(B) > 0$ the number of particles $\xi^{(x)}_t(B)$ grows like its expectation which, using the results from Section \ref{sec:mtf}, can be explicitly computed and, furthermore, that these particles distribute themselves according to $\nu$. The following result intends to give conditions under which the equality in \eqref{eq:eq} is guaranteed. It is based on the study of the moment generating operator associated to the branching dynamics, which we define now.

\begin{definition} \label{def:defG}
	Let $\mathfrak{B}$ denote the class of measurable functions $g: (J,\B_J) \to ([0,1],\B_{[0,1]})$, where $\B_{[0,1]}$ denotes the Borel $\sigma$-algebra on $[0,1]$. We define the \textit{moment generating operator} $G: \mathfrak{B} \to \mathfrak{B}$ by the formula
	$$
	G(g)(x):=\E_x \left( \prod_{u \in \overline{\xi}_1} g(u_1)\right)
	$$ with the convention that $\prod_{u \in \emptyset} = 1$, used whenever $|\xi_1| = 0$. 
\end{definition}

It is immediate to see that $\mathbf{1}$, the function constantly equal to one on $J$, is a fixed \mbox{point of $G$,} i.e. $G(\mathbf{1})=\mathbf{1}$. Furthermore, by the branching property of the dynamics one has that the functions 
\begin{equation}\label{eq:defeta}
\eta(x):= P_x( \Theta^c) \hspace{1cm}\text{ and }\hspace{1cm}\sigma(x):=P_x(D_\infty=0)
\end{equation} are also fixed points of $G$, see Proposition \ref{prop:fp} below.\footnote{The fact that both $\eta$ and $\sigma$ are indeed measurable holds if one assumes that the process $X^{(x)}$ is also measurable as a function of $x$ in a suitable manner.} Since clearly $\eta \leq \sigma$ and we also have $\sigma \neq \mathbf{1}$ since $\E_x(D_\infty)=1$ for all $x \in J$ by Theorem \ref{theo:main}, if we show that $G$ has at most two fixed points then this would imply that $\eta \equiv \sigma$ and so \eqref{eq:eq} would follow at once. Unfortunately, it is not always the case that $G$ has only two fixed points, see Section \ref{sec:tou} for instance. Hence, we must impose some additional conditions for this to occur. First, we make some further assumptions.

\begin{assump}\label{assumpG2} Throughout Subsection \ref{sec:strict} we will make the following additional assumptions:
	\begin{enumerate}
	\item [B1.] $\Phi_x < +\infty$ for all $x \in J$.
	\item [B2.] For any $B \in \B_J$ with $\nu(B)>0$ there exists $B^* \in \mathcal{C}_X$ such that $B^* \subseteq B$ and $\nu(B^*)>0$.
	\item [B3.] The conditioned evolution of $X$ is \textit{irreducible}, i.e. for any pair $x \neq x' \in J$ and $B \in \B_J$ there exists $n=n(x,x',B) \in \N$ such that 
	$$
	 P_{x'}(X_1 \in B)> 0 \Longrightarrow P_{x}(X_{n+1} \in B) > 0.
	$$
	\end{enumerate}
\end{assump}

Assumption (B1) is not really restrictive, as we wish to focus here only on the case in which there is convergence in $L^2$. On the other hand, we impose (B2) in order to obtain an appropriate control on the growth of $\xi_t(B)$, namely that for each $n \in \N$ and $x \in J$
$$
\lim_{t \rightarrow +\infty} \left[\inf_{y \in J_n}\E_x(\xi_t(B)) \right]=+\infty \hspace{1cm}\text{ and }\hspace{1cm}\lim_{t \rightarrow +\infty} \xi^{(x)}_t(B) = +\infty \hspace{0.2cm}\text{ on }\{D^{(x)}_\infty > 0\},
$$ which follows from (B1-B2) by Theorem \ref{theo:main} (see Section \ref{sec:theo3} below). Furthermore, typically (B2) is very easy to check, see \cite[Section 9]{JonckSag} for details. Finally, the notion of irreducibility in (B3) is different than that of $\psi$-irreducibility featured in \cite{meyn1993} and weaker than the standard definition of irreducibility when $J$ is countable. Although not entirely standard, it is nevertheless the notion which appears naturally in our analysis and it is satisfied in all applications of interest, see \cite{JonckSag}.

Next, we introduce the notion of strong supercriticality which plays a key role in what follows.


\begin{definition} \label{def:ss} We shall say that the branching dynamics $\xi^{(x)}=(\xi^{(x)}_t)_{t \geq 0}$ is \textit{strongly supercritical} if:
	\begin{enumerate}
		\item [i.] $\xi^{(x)}$ is supercritical, i.e. $P_x(\Theta) > 0$.
		\item [ii.] $\eta(x)=P_x(\Gamma)$, where $\Gamma^{(x)}$ is the event defined as 
		$$
		\Gamma^{(x)}:=\left\{ \lim_{t \rightarrow +\infty} \left[ \min_{u \in \overline{\xi}_t^{(x)}} \Phi_{u_t}\right]= +\infty\right\}
		$$ with the convention that $\min_{u \in \emptyset} \Phi_{u_t}:=+\infty$, used whenever $|\xi_t|=0$. 
    \end{enumerate} Note that, provided (i) holds, (ii) is equivalent to the condition $P_x(\Gamma|\Theta)=0$. 
\end{definition}

One can check (see once again \cite[Section 9]{JonckSag}) that in all the examples of Section \ref{sec:examples} the mapping $x \mapsto \Phi_x$ is bounded over subsets of $J$ which are at a positive distance from $\partial_* \overline{J}$ and, on the other hand, that it tends to infinity as $x$ approaches $\partial_* \overline{J}$. Thus, one can interpret strong supercriticality as the condition stating that on the event of non-extinction particles never accumulate all together on the boundary of the state space, $\partial_* \overline{J}$.  
On the other hand, we will see later in Section \ref{sec:theo3} that under Assumptions \ref{assumpG2} strong supercriticality is equivalent to having 
\begin{equation} \label{eq:sscom}
P_x\left( \limsup_{t \rightarrow +\infty} \xi_t(B) > 0\right)=P_x(\Theta) > 0
\end{equation} for every $x \in J$ and all $B \in \B$ with $\nu(B) > 0$, which is the analogue in our context of the notion of strong local survival studied in \cite{bertacchi2014} and other references therein. However, in general it will not be equivalent to the concept of (plain) local survival introduced in \cite{EngKyp2004}, which is said to take place whenever there exists a compact set $\mathcal{K} \subseteq J$ such that
		\begin{equation}\label{eq:ls}
		P_x\left( \limsup_{t \rightarrow +\infty} \xi_t(\mathcal{K}) > 0\right) > 0.
		\end{equation} See Section \ref{sec:tou} for further details.
		
Our next result states that strong supercriticality is a necessary and sufficient condition for $G$ to have exactly two fixed points whenever under Assumptions \ref{assumpG2}.

\begin{theorem}\label{theo:main3} If Assumptions \ref{assumpG2} also hold then the following statements are equivalent:
\begin{enumerate}
	\item [i.] $G$ has exactly two fixed points, $\eta$ and $\mathbf{1}$.
	\item [ii.] $\eta(x) = \sigma(x)$ for all $x \in J$.
	\item [iii.] $\eta(x) = \sigma(x)$ for some $x \in J$.
    \item [iv.] $\xi^{(x)}$ is strongly supercritical for some $x \in J$. 
    \item [v.] $\xi^{(x)}$ is strongly supercritical for all $x \in  J$. 
\end{enumerate} 
\end{theorem}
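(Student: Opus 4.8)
Throughout we assume, as in the statement, Assumptions \ref{assumpG}, \ref{assumpG0} and \ref{assumpG2} (so in particular $\Phi_x<\infty$ for every $x\in J$). The backbone of the argument is the chain of inequalities
$$\eta(x)\ \le\ \sigma(x)\ \le\ P_x(\Gamma^{(x)})\qquad(x\in J),$$
whose first inequality is immediate since $D_\infty=0$ on $\Theta^c$. I would begin with a reduction: by (B1) and Theorem \ref{theo:main2} we have $\E_x(D_\infty)=1$, hence $P_x(\Lambda^{(x)})>0$ and a fortiori $P_x(\Theta^{(x)})>0$, for every $x$; thus part (i) of Definition \ref{def:ss} always holds and ``$\xi^{(x)}$ is strongly supercritical'' means exactly $\eta(x)=P_x(\Gamma^{(x)})$. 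The core of the proof is then the identity $\{D^{(x)}_\infty=0\}=\Gamma^{(x)}$ up to $P_x$-null sets; once this is known, strong supercriticality at $x$ is literally equivalent to $\eta(x)=\sigma(x)$, so (iv)$\Leftrightarrow$(iii) and (v)$\Leftrightarrow$(ii), and it only remains to prove (i)$\Leftrightarrow$(ii)$\Leftrightarrow$(iii), which is a soft matter about fixed points of $G$ and the irreducibility (B3).

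\emph{Inclusion $\{D_\infty=0\}\subseteq\Gamma$ (i.e.\ $\sigma\le P_\cdot(\Gamma)$).} By the branching property, $P_x(D_\infty=0\mid\F_t^{(x)})=\prod_{u\in\overline{\xi}_t}\sigma(u_t)$ (with $\prod_\emptyset=1$), so since $\{D_\infty=0\}\in\F_\infty^{(x)}$, Lévy's upward theorem gives $\prod_{u\in\overline{\xi}_t}\sigma(u_t)\to\mathbf 1_{\{D_\infty=0\}}$ $P_x$-a.s. On the other hand, for every $y\in J$ (so $\Phi_y<\infty$ by (B1)), Paley--Zygmund with $\E_y(D_\infty)=1$ and $\E_y(D_\infty^2)=\Phi_y$ (Theorem \ref{theo:main2}) gives $\sigma(y)=P_y(D_\infty=0)\le 1-1/\Phi_y$. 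Hence on the event that $\min_{u\in\overline{\xi}_t}\Phi_{u_t}\le K$ for infinitely many $t$, the products stay $\le 1-1/K$ along a subsequence, so their limit, being $0$ or $1$, is $0$; letting $K\uparrow\infty$ and recalling that $\Theta^c\subseteq\Gamma$ (so $\Gamma^c=\{\liminf_t\min_{u\in\overline{\xi}_t}\Phi_{u_t}<\infty\}$) we get $\Gamma^c\subseteq\{D_\infty>0\}$.

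\emph{Inclusion $\Gamma\subseteq\{D_\infty=0\}$ (i.e.\ $P_\cdot(\Gamma)\le\sigma$).} Put $a_{u,t}:=h(u_t)e^{-(r(m_1-1)-\lambda)t}/h(x)$. Decomposing $D_\infty$ along the particles alive at time $t$, $D_\infty=\sum_{u\in\overline{\xi}_t}a_{u,t}\,D_\infty^{(u_t),u}$ with the $D_\infty^{(u_t),u}$ conditionally independent given $\F_t^{(x)}$, of conditional mean $1$ and conditional second moment $\Phi_{u_t}$; this yields $\E_x(D_\infty^2\mid\F_t^{(x)})=D_t^2+\sum_{u}a_{u,t}^2(\Phi_{u_t}-1)$. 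As $D_t^2\to D_\infty^2$ and $\E_x(D_\infty^2\mid\F_t^{(x)})\to D_\infty^2$ a.s.\ (Lévy, using $\E_x(D_\infty^2)=\Phi_x<\infty$), the excess $\sum_u a_{u,t}^2(\Phi_{u_t}-1)\to0$; on $\Gamma$, where $m_t:=\min_{u}\Phi_{u_t}\to\infty$, this forces $\sum_u a_{u,t}^2\Phi_{u_t}\to0$. The remaining step --- upgrading this to $D_t=\sum_u a_{u,t}\to0$ on $\Gamma$ --- is the one I expect to be the main obstacle, since $\ell^2$-decay of $(a_{u,t}\sqrt{\Phi_{u_t}})_u$ does not imply $\ell^1$-decay of $(a_{u,t})_u$ without a bound on particle numbers. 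My plan for it is to show that $Z^{(x)}_t:=\sum_{u\in\overline{\xi}_t}a_{u,t}\,\Phi_{u_t}$ is a nonnegative supermartingale under $P_x$ --- which, via the many-to-one lemma, amounts to the generator-level inequality that $h\cdot\Phi$ is $\lambda$-superharmonic for the killed motion --- and then to observe that $Z^{(x)}_t$ converges a.s.\ to a finite limit while $D_t\le Z^{(x)}_t/m_t\to0$ on $\Gamma$. Combining the two inclusions gives $\sigma(x)=P_x(\Gamma^{(x)})$ for all $x$.

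\emph{The fixed-point equivalences.} For (i)$\Rightarrow$(ii): by Proposition \ref{prop:fp}, $\sigma$ is a fixed point of $G$, and $\sigma\neq\mathbf 1$ since $\E_x(D_\infty)=1$; if $G$ has only the fixed points $\eta,\mathbf 1$, then $\sigma=\eta$. The implication (ii)$\Rightarrow$(iii) is trivial. For (iii)$\Rightarrow$(ii): if $\eta(x_0)=\sigma(x_0)$, iterating the fixed-point identity gives $\E_{x_0}\bigl(\prod_{u\in\overline{\xi}_{n+1}}\eta(u_{n+1})\bigr)=\E_{x_0}\bigl(\prod_{u\in\overline{\xi}_{n+1}}\sigma(u_{n+1})\bigr)$ with the integrands ordered pointwise, so $\prod_{u}\eta(u_{n+1})=\prod_{u}\sigma(u_{n+1})$ $P_{x_0}$-a.s.; intersecting with the positive-probability event ``no branching by time $n+1$'' forces $\eta=\sigma$ on the support of $P_{x_0}(X_{n+1}\in\cdot\,)$ for every $n$, and then (B3) with the many-to-one lemma propagate $\eta=\sigma$ to all of $J$. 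For (ii)$\Rightarrow$(i): $\eta$ is the minimal fixed point ($G^n(\mathbf 0)\uparrow\eta$), so every fixed point $g$ satisfies $g\ge\eta$; writing $g(x)=\eta(x)+\E_x(Y^g_\infty\mathbf 1_{\Theta})$ for the bounded martingale $Y^g_n=\prod_{u\in\overline{\xi}_n}g(u_n)$, if $g\neq\mathbf 1$ then $g<1-\delta$ on a Borel set which (by (B3), the many-to-one lemma, (B2), and $\xi_t(B)\to\infty$ on $\{D_\infty>0\}$) is visited infinitely often on $\{D_\infty>0\}$, whence $Y^g_\infty=0$ there and $g(x)\le P_x(\Theta^c)+P_x(\{D_\infty=0\}\cap\Theta)=\sigma(x)=\eta(x)$ by (ii); thus $g=\eta$. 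Besides the supermartingale property of $Z^{(x)}_t$, this Borel--Cantelli-type step showing $Y^g_n\to0$ on $\{D_\infty>0\}$ is the other point that will require care.
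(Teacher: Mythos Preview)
Your overall plan is to prove the pointwise identity $\{D_\infty^{(x)}=0\}=\Gamma^{(x)}$ a.s.\ and then read off (iii)$\Leftrightarrow$(iv) and (ii)$\Leftrightarrow$(v) directly. One half of this works: the Paley--Zygmund argument giving $\sigma(y)\le 1-1/\Phi_y$ together with L\'evy's theorem for the product martingale $\prod_{u}\sigma(u_t)$ does yield $\Gamma^c\subseteq\{D_\infty>0\}$, hence $\sigma(x)\le P_x(\Gamma)$ for all $x$. This already gives (iv)$\Rightarrow$(iii) and (v)$\Rightarrow$(ii), and your fixed-point equivalences (i)$\Leftrightarrow$(ii)$\Leftrightarrow$(iii) are essentially the paper's Proposition \ref{prop:fpG}. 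Your (ii)$\Rightarrow$(i) is also fine and in fact a pleasant shortcut: under (ii) one has $\Lambda=\Theta$ a.s., so the $L^2$-LLN (Theorem \ref{theo:main}) plus (B2) forces $\xi_n(B_\delta)\to\infty$ in probability on $\Theta$, and a subsequence argument then gives $Y_\infty^g=0$ on $\Theta$.

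The genuine gap is the reverse inclusion $\Gamma\subseteq\{D_\infty=0\}$, which you need for (iii)$\Rightarrow$(iv). Your plan is to show that $Z_t=\sum_{u}a_{u,t}\Phi_{u_t}$ is a $P_x$-supermartingale and then use $D_t\le Z_t/m_t$. By the many-to-one lemma, the supermartingale property is equivalent to $\tilde{\E}_y(\Phi_{X_s})\le \Phi_y$ for all $y\in J$ and $s\ge0$, i.e.\ $\Phi$ is $\tilde{P}$-superharmonic. Nothing in Assumptions \ref{assumpG}--\ref{assumpG2} implies this; writing $h^2(y)\Phi_y=(m_2-m_1)r\int_0^\infty e^{-\beta u}S_u[h^2](y)\,du$ with $\beta=r(m_1-1)-2\lambda$ shows $h^2\Phi$ is the $\beta$-resolvent of $h^2$, which satisfies $S_s[h^2\Phi]\le e^{\beta s}h^2\Phi$, but this is not the inequality you need (you need $\lambda$-superharmonicity of $h\Phi$, not $(-\beta)$-superharmonicity of $h^2\Phi$). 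Even integrability of $Z_t$, i.e.\ $\tilde{\E}_x(\Phi_{X_t})<\infty$, is not guaranteed by (B1). Without this step, (iii)$\Rightarrow$(iv) and (ii)$\Rightarrow$(v) are unproven, so the cycle is broken.

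The paper closes the cycle by a different route that avoids the identity $\Gamma=\{D_\infty=0\}$ altogether. It first reformulates strong supercriticality as the discrete-time condition (ii') of Proposition \ref{prop:ss1}: $P_x(\Theta)=P_x(\limsup_{k}\xi_k(\tilde{J}_n)>0)$ for some $n$. The easy direction (ii')$\Rightarrow$(ii) of Definition \ref{def:ss} follows at once from the inclusions $\Theta^c\subseteq\Gamma\subseteq\bigcap_n\{\lim_k\xi_k(\tilde{J}_n)=0\}$. For (iii)$\Rightarrow$(iv) one then only needs to verify (ii'): on $\Lambda$, the $L^2$-LLN and (B2) give $\xi_n(\tilde{J}_k)\to\infty$ in probability, so $P_x(\limsup_n\xi_n(\tilde{J}_k)>0)\ge P_x(\Lambda)$; if $\eta(x)=\sigma(x)$, then $P_x(\Lambda)=P_x(\Theta)$ and (ii') follows. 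The harder direction (ii)$\Rightarrow$(ii') of Proposition \ref{prop:ss1}, as well as (v)$\Rightarrow$(i), are handled via a Galton--Watson coupling (Lemma \ref{lemma:cru}): whenever particles keep revisiting $\tilde{J}_m$, one can dominate $(\xi_{kT}(B))_k$ from below by an i.i.d.\ sequence of supercritical Galton--Watson trees, built using the second-moment bound $\E_x(\xi_T^2(B))/\E_x^2(\xi_T(B))\le C_n\Phi_x+1$ uniformly over $\hat J_n$ (Proposition \ref{prop:b1}). This coupling is precisely what replaces your unproven supermartingale step.
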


We note that strong supercriticality is not a trivial condition under our current assumptions, not even for the particular case of $\lambda$-positive systems to be considered in Section \ref{sec:rpos1} below. Indeed, Section \ref{sec:tou} shows an example of a $\lambda$-positive system which satisfies Assumptions \ref{assumpG}, \ref{assumpG0} and \ref{assumpG2} but is not strongly supercritical. In particular, we have in this example that the random variable $D^{(x)}_\infty$ is zero with positive probability on the event $\Theta^{(x)}$ of non-extinction. 
Nonetheless, whenever $\xi^{(x)}$ is strongly supercritical this is not the case and so one obtains the following corollary.

\begin{corollary}\label{cor:convnu} If Assumptions \ref{assumpG2} hold and $\xi^{(x)}$ is strongly supercritical then $D^{(x)}_\infty > 0$ on $\Theta^{(x)}$. 
In particular, for every $B,B' \in \mathcal{C}_X$ with $\nu(B')> 0$ we have that, conditionally on $\Theta^{(x)}$, as $t \rightarrow +\infty$
	\begin{equation*} 
	\nu_t^{(x)}(B,B') \overset{P}{\longrightarrow} \frac{\nu(B)}{\nu(B')}.
	\end{equation*} 
\end{corollary}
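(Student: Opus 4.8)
The plan is to derive Corollary~\ref{cor:convnu} as a quick consequence of Theorem~\ref{theo:main3} and Theorem~\ref{theo:main}. First I would note that under Assumptions~\ref{assumpG2} the hypothesis (B1) guarantees $\Phi_x<+\infty$ for every $x\in J$, so by Theorem~\ref{theo:main} the Malthusian martingale $D^{(x)}$ converges in $L^2$ to a limit $D^{(x)}_\infty$ with $\E_x(D_\infty)=1$; in particular $\sigma\neq\mathbf 1$. Strong supercriticality of $\xi^{(x)}$ then, via the equivalence (iv)$\Leftrightarrow$(v)$\Leftrightarrow$(ii) in Theorem~\ref{theo:main3}, forces $\eta(y)=\sigma(y)$ for \emph{all} $y\in J$ --- and in particular $\eta(x)=\sigma(x)$. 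Unwinding the definitions in \eqref{eq:defeta}, this says $P_x(\Theta^c)=P_x(D_\infty=0)$; since we always have the inclusion $\{D^{(x)}_\infty>0\}\subseteq\Theta^{(x)}$, equivalently $\{D^{(x)}_\infty=0\}\supseteq(\Theta^{(x)})^c$, the equality of probabilities upgrades to equality of events up to a null set, i.e. $P_x(\{D^{(x)}_\infty=0\}\setminus(\Theta^{(x)})^c)=0$. This is precisely the statement $D^{(x)}_\infty>0$ almost surely on $\Theta^{(x)}$, which is the first assertion of the corollary.

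For the second assertion, fix $B,B'\in\mathcal C_X$ with $\nu(B')>0$. Since $\Phi_x<+\infty$, part (i) of Theorem~\ref{theo:main} shows $W^{(x)}(B,B')$ is bounded in $L^2$, so part (ii) applies: $W^{(x)}_t(B,B')\to\frac{\nu(B)}{\nu(B')}D^{(x)}_\infty$ in $L^2$, and on the event $\{D^{(x)}_\infty>0\}$ we get the convergence in probability $\nu^{(x)}_t(B,B')\overset{P}{\to}\frac{\nu(B)}{\nu(B')}$ stated in \eqref{eq:ks2}. By the first part of the corollary, conditionally on $\Theta^{(x)}$ we have $D^{(x)}_\infty>0$ almost surely, so the event $\{D^{(x)}_\infty>0\}$ and $\Theta^{(x)}$ coincide up to a $P_x$-null set; hence the convergence in probability that holds conditionally on $\{D^{(x)}_\infty>0\}$ also holds conditionally on $\Theta^{(x)}$. (One should be slightly careful here: convergence in probability under $P_x(\cdot\mid A)$ is, for fixed $A$ with $P_x(A)>0$, equivalent to convergence in $P_x$-probability of the sequence $\mathbf 1_A\cdot(\text{error})$ to zero, and replacing $A$ by an a.s.-equal event does not affect this; so the substitution of $\{D^{(x)}_\infty>0\}$ by $\Theta^{(x)}$ is legitimate.)

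I do not expect any genuine obstacle here, as the corollary is essentially bookkeeping: the only thing to be careful about is the passage from ``equality of probabilities of nested events'' to ``equality of events a.s.'', and then the resulting interchange of the conditioning event in the statement of convergence in probability. Both are standard measure-theoretic observations. If one wanted to be completely explicit, the single displayed computation worth writing out is
\[
P_x\big( \Theta^{(x)}\setminus\{D^{(x)}_\infty>0\}\big)
= P_x(\Theta^{(x)}) - P_x\big(\{D^{(x)}_\infty>0\}\big)
= (1-\eta(x)) - (1-\sigma(x)) = \sigma(x)-\eta(x) = 0,
\]
using $\{D^{(x)}_\infty>0\}\subseteq\Theta^{(x)}$ for the first equality and strong supercriticality (through Theorem~\ref{theo:main3}) for the last.
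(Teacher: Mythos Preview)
Your proposal is correct and follows exactly the approach the paper intends: the corollary is not given a separate proof in the paper, but the surrounding discussion (just before the statement of Theorem~\ref{theo:main3}) already sketches precisely your argument --- use Theorem~\ref{theo:main3} to get $\eta\equiv\sigma$, combine with the inclusion $\Lambda^{(x)}\subseteq\Theta^{(x)}$ to obtain $P_x(\Lambda\mid\Theta)=1$, and then invoke \eqref{eq:ks2} from Theorem~\ref{theo:main}. One minor point: the $L^2$-convergence of $D^{(x)}$ and the fact that $\E_x(D_\infty)=1$ come from Theorem~\ref{theo:main2}, not Theorem~\ref{theo:main}.
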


Still, strong supercriticality appears to be a hard condition to check directly, at least in principle. In the extended version of this article \cite{JonckSag}, we introduce via examples some general methods to establish strong supercriticality which apply to a wide range of systems.

\subsection{The case of $\lambda$-positive systems}\label{sec:rpos1}

Perhaps the simplest example of an underlying motion satisfying Assumptions \ref{assumpG} is that of a $\lambda$-positive process, which we formally introduce now.

\begin{definition}
	\label{def:rpos0}
	Given $\lambda \in \R$ and a Markov semigroup $S=(S_t)_{t \geq 0}$ with \mbox{associated generator $\mathcal{L}_S$,} we will say that $S$ is $\lambda$-positive if there exist a nonnegative measurable function $h: \overline{J} \rightarrow [0,+\infty)$ satisfying $h|_J > 0$ and a (not necessarily finite) nonnegative measure $\nu$ on $(\overline{J},\B)$, both unique up to constant multiples, such that:
	\begin{enumerate}
		\item [$\bullet$] $S_t[h]=e^{-\lambda t}h$ for all $t$, i.e. $-\lambda$ is a right-eigenvalue of $\mathcal{L}_S$ with \mbox{associated eigenfunction $h$.}
		\item [$\bullet$] For any nonnegative measurable $f : \overline{J} \rightarrow \R_{\geq 0}$ and all $t$,
		$$
		\int_{\overline{J}}S_t[f](x)d\nu(x)=e^{-\lambda t}\int_{\overline{J}}f(x)d\nu(x),
		$$ i.e. $-\lambda$ is a left-eigenvalue of $\mathcal{L}_S$ with associated eigenmeasure $\nu$.
		\item [$\bullet$] The eigenvectors $h$ and $\nu$ are such that
		$$
		\nu(h):=\int_{\overline{J}} h(x) d\nu(x) < +\infty.
		$$ 		
	\end{enumerate}
In particular, we shall say that our branching dynamics $\xi$ is $\lambda$-positive whenever its associated expectation semigroup $S^{(\xi)}=(S_t^{(\xi)})_{t \geq 0}$ is $\lambda$-positive according to the definition given above, where for each $t \geq 0$ and nonnegative measurable $f:\overline{J} \rightarrow \R_{\geq 0}$ we define $$
S^{(\xi)}_t[f](x):=\E_x\left( \sum_{u \in N_t} f(u_t)\right).
$$
\end{definition}

\begin{remark} Using the many-to-one lemma (Lemma \ref{lema:mt1} below) it is straightforward to see that, in the current case of a constant branching rate $r>0$ (and only in this case), $\xi$ will be $\lambda$-positive if and only if the underlying motion $X$ is $(r(m_1-1)-\lambda)$-positive. For this reason, in the following we shall focus only on $\lambda$-positivity of underlying motions rather than that of branching dynamics, as it makes no difference in our current setting.
\end{remark}

Note that if $S$ is a $\lambda$-positive Markov semigroup then its right-eigenfunction $h$ satisfies $h \equiv 0$ on $\partial_* J$. With this, we may define a Markov semigroup $\tilde{S}=(\tilde{S}_t)_{t \geq 0}$ on the space $J$ by the formula
	$$
	\tilde{S}_t[f](x)= \frac{e^{\lambda t}}{h(x)}S_t[hf](x)
	$$ 
	for any nonnegative $\B_J$-measurable $f:J \rightarrow \R_{\geq 0}$, $x \in J$ and $t \geq 0$, where we set $hf \equiv 0$ on $\partial_* J$. We will call $\tilde{S}$ the $h$-\textit{transform} of $S$. Let us notice that, in the case $S$ is the semigroup belonging to a $\lambda$-positive underlying motion $X$, $\tilde{S}$ is none other than the semigroup corresponding to $X$ under the $h$-transform defined in \eqref{eq:mcof}. It is straightforward to see that, if one normalizes $h$ and $\nu$ so that $\nu(h)=1$, then the probability distribution $\mu$ on $(J,\B_J)$ defined through the formula
	$$
	\frac{d\mu}{d\nu}=h
$$ is invariant for $\tilde{S}$, i.e. for all nonnegative $\B_J$-measurable $f:J \rightarrow \R_{\geq 0}$ and all $t \geq 0$
$$
\int_J \tilde{S}_t[f](x)d\mu(x) = \int_J f(x)d\mu(x).
$$ Let us assume further that $X$ is ergodic under the $h$-transform, with limiting distribution $\mu$. Then, for any $B \in \B_J$ such that the function $\frac{\mathbbm{1}_B}{h}$ is bounded and $\mu$-almost surely continuous, we have 
$$
\lim_{t \rightarrow +\infty} \tilde{\E}_x\left( \frac{\mathbbm{1}_B}{h}(X_t)\right) = \mu\left( \frac{\mathbbm{1}_B}{h}\right)= \nu(B)
$$ as $t \rightarrow +\infty$, so that by \eqref{eq:fasymp} the asymptotic formula \eqref{A2} holds for any such $B$ by taking $p(t)\equiv 1$. 
Hence, we see that such $\lambda$-positive motions fall naturally into the context of Assumptions \ref{assumpG}. \mbox{However,} neither the uniform convergence of $s_B$ over each $J_n$ nor the square-integrability of \mbox{$M_t$ will} follow immediately from the ergodicity of $X$ under the $h$-transform, so that further conditions will need to be imposed on the process to guarantee them. This is not a disadvantage when compared to other approaches in the literature, as additional conditions are always imposed in order to obtain a \mbox{law of large numbers,} see \cite{chen2016,englander2010}. Here, we propose an alternative condition to check the remainder of Assumptions \ref{assumpG} in the $\lambda$-positive setting and obtain Theorem \ref{theo:main}, based on the existence of a Lypaunov functional for the process $X$ under the measure $\tilde{P}$.

\begin{definition}\label{def:lyapunov} A $\B_J$-measurable $V : J \rightarrow \R_{\geq 0}$ is called a \mbox{(\textit{geometric}) \textit{Lyapunov functional for }$X$} (under the measure $\tilde{P}$) whenever it satisfies: 
\begin{itemize}
	\item [V1.] There exists $t > 0$ such that for every $R > 0$ one can find $\alpha_R \in (0,1)$ verifying  
	$$
	|\tilde{\E}_x(f(X_t))-\tilde{\E}_y(f(X_t))| \leq 2(1-\alpha_R)\|f\|_\infty
	$$ for any bounded $\B_J$-measurable $f:J \rightarrow \R$ and all $x,y \in J$ such that $V(x)+V(y) \leq R$.
	\item [V2.] There exist constants $\gamma,K > 0$ such that for all $t \geq 0$ and $x \in J$ one has
	$$
	\tilde{\E}_x(V(X_t)) \leq e^{-\gamma t}V(x) + K.
	$$
\end{itemize} 
\end{definition}

Having a Lyapunov functional ensures that, under the $h$-transform $\tilde{P}$, the process $X$ converges to equilibrium exponentially fast and, furthermore, that it does so uniformly over subsets of $J$ where $V$ is bounded, see Proposition \ref{prop:Lyapunov} below. As a consequence, 
one has the following result relating the validity of Theorem \ref{theo:main} for $\lambda$-positive processes to the existence of a $h$-locally bounded Lyapunov functional for $X$ with a large enough growth at infinity. 

\begin{proposition}\label{prop:lyapunov1}
	If $X$ is $\lambda$-positive and admits a Lyapunov functional $V$ such that:
	\begin{enumerate}
		\item [V3.] $V$ is $h$-locally bounded, i.e. $\sup_{x \in J_n} V(x) <+\infty$ for each $n \in \N$,
		\item [V4.] $\left\|\frac{h}{1+V}\right\|_\infty <+\infty$,
\end{enumerate}
    then Assumptions \ref{assumpG} are satisfied for all $B \in \mathcal{C}_X$, where $\mathcal{C}_X$ here is given by 
	\begin{equation}
	\label{eq:defcx}
	\mathcal{C}_X:= \left\{ B \in \B_J :  \left\| \frac{\mathbbm{1}_B}{h}\right\|_\infty <+\infty \right\}.
	\end{equation} Furthermore, there exists a constant $C=C(r(m_1-1),\lambda,V) > 0$ such that for all $x \in J$
	\begin{equation} \label{eq:compvc}
	\Phi_x \leq C \cdot \frac{1+V(x)}{h(x)} < +\infty
	\end{equation} In particular, the convergences in \eqref{eq:conv1} and \eqref{eq:ks2} hold for any $x \in J$ and $B,B' \in \mathcal{C}_X$ with $\nu(B')> 0$, and assumptions (B1)-(B2) are also satisfied.
\end{proposition}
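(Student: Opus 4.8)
The plan is to verify Assumptions~\ref{assumpG} directly from $\lambda$-positivity together with (V1)--(V4), to bound $\Phi_x$ by a change of measure, and then to read off the remaining assertions from Theorems~\ref{theo:main2}--\ref{theo:main}. For (A1): since $X$ is $\lambda$-positive, $h$ is a nonnegative right-eigenfunction of $\mathcal L$ for $-\lambda$ with $h|_J>0$ and $h\equiv 0$ on $\partial_*\overline J$, so (A1)(i) holds and $M^{(x)}$ is a mean-one martingale. For the square-integrability in (A1)(ii) I would pass through the $h$-transform \eqref{eq:mcof}, which gives $\E_x(h^2(X_t))=h(x)e^{-\lambda t}\tilde\E_x(h(X_t))$, and then estimate, using (V4) and the drift inequality (V2),
\[
\tilde\E_x(h(X_t))\ \le\ \Big\|\tfrac{h}{1+V}\Big\|_\infty\big(1+\tilde\E_x(V(X_t))\big)\ \le\ \Big\|\tfrac{h}{1+V}\Big\|_\infty(1+K+V(x))\ <\ +\infty.
\]

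For (A2), fix $B\in\mathcal C_X$, so that $f:=\mathbbm 1_B/h$ is a bounded $\B_J$-measurable function. By the representation \eqref{eq:fasymp} with $p\equiv 1$ it suffices to show $\tilde\E_x(f(X_t))\to\mu(f)=\nu(B)$ as $t\to+\infty$, uniformly over each $J_n$, where $\nu$ is the left eigenmeasure supplied by $\lambda$-positivity (normalized so that $\nu(h)=1$) and $\mu$ is the corresponding invariant probability of the $h$-transform. This is exactly where Proposition~\ref{prop:Lyapunov} enters: the Lyapunov conditions (V1)--(V2) yield geometric convergence of $\tilde\E_x(f(X_t))$ to $\mu(f)$, uniformly on $\{V\le R\}$ for every $R>0$; combining this with (V3), which gives $J_n\subseteq\{V\le R_n\}$ for $R_n:=\sup_{x\in J_n}V(x)<+\infty$, produces the uniform convergence of $s_B(\cdot,t)$ to $0$ over $J_n$ required in (A2)(iii). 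Part (iv) is immediate since $s_B(x,t)\le\tilde\E_x(f(X_t))\le\|f\|_\infty$ for all $x,t$; and part (i) holds because $\nu(B)=\mu(\mathbbm 1_B/h)\le\|\mathbbm 1_B/h\|_\infty<+\infty$ for every $B\in\mathcal C_X$, while $\nu(h)=1$ forces $\nu(J_n)>0$ for all large $n$ (as $\bigcup_n J_n$ is $\nu$-conull in $J$), so one may take $B'=J_n$; finally $\mathcal C_X$ is nonempty since $J_n\in\mathcal C_X$.

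For the bound on $\Phi_x$, the same identity gives $\E_x(M_s^2)=e^{\lambda s}\tilde\E_x(h(X_s))/h(x)\le\|h/(1+V)\|_\infty(1+K+V(x))e^{\lambda s}/h(x)$; inserting this into the formula for $\Phi_x$ from Theorem~\ref{theo:main2} and using (I2), i.e.\ $r(m_1-1)>\lambda$, to evaluate $\int_0^\infty re^{-(r(m_1-1)-\lambda)s}\,ds=r/(r(m_1-1)-\lambda)<+\infty$, yields $\Phi_x\le C(1+V(x))/h(x)$ with $C$ a finite constant depending only on $m$, on $r(m_1-1)-\lambda$, and on the constants $K$ and $\|h/(1+V)\|_\infty$ attached to $V$; this is finite because $h(x)\in(0,+\infty)$ and $V(x)<+\infty$ on $J$ by (V3). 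Having verified Assumptions~\ref{assumpG} and $\Phi_x<+\infty$ for every $x$, and since Assumptions~\ref{assumpG0} hold throughout, Theorems~\ref{theo:main2} and \ref{theo:main} give \eqref{eq:conv1} and \eqref{eq:ks2}; moreover (B1) is precisely $\Phi_x<+\infty$, and (B2) follows by taking $B^*:=B\cap J_n$, which lies in $\mathcal C_X$ and satisfies $\nu(B^*)\uparrow\nu(B)>0$, hence $\nu(B^*)>0$ for $n$ large.

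The only genuinely substantial input is Proposition~\ref{prop:Lyapunov} itself, namely the passage from the Lyapunov conditions (V1)--(V2) to honest geometric ergodicity of $X$ under $\tilde P$, uniform over sublevel sets of $V$ and hence (via (V3)) over each $J_n$; granting that proposition, everything else is routine bookkeeping with the $h$-transform together with the elementary estimates above.
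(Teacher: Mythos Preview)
Your proposal is correct and follows essentially the same route as the paper: both reduce everything to Proposition~\ref{prop:Lyapunov}, use the $h$-transform identity $\E_x(M_t^2)=e^{\lambda t}\tilde\E_x(h(X_t))/h(x)$ to handle (A1) and the $\Phi_x$ bound, and verify (A2) for $B\in\mathcal C_X$ via the geometric estimate applied to $g=\mathbbm 1_B/h$, with (V3) passing from sublevel sets of $V$ to the sets $J_n$. The only cosmetic difference is that you bound $\tilde\E_x(h(X_t))$ directly via the drift condition (V2), obtaining $\|h/(1+V)\|_\infty(1+K+V(x))$, whereas the paper routes this through Proposition~\ref{prop:Lyapunov} and picks up a $\mu(h)$ term; your shortcut is slightly cleaner but inessential.
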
 


Finally, whenever $X$ is $\lambda$-positive and admits such a Lyapunov functional, as a matter of fact one can show \eqref{eq:conv1}-\eqref{eq:ks2} in the almost sure sense provided that, on the other hand, $V$ does not grow too fast at infinity. This is the content of our last result.

\begin{theorem}\label{theo:main5} Suppose that $\xi$ is a $\lambda$-positive process such that the eigenfunction $h$ is continuous and $X$ admits a Lyapunov functional $V$ verifying (V3-V4). Then, for each $x \in J$ such that 
	\begin{equation}
	\label{eq:phibarra}
	\overline{\Phi}_x:=\frac{(m_2-m_1)r}{h(x)} \int_0^\infty \tilde{\E}_x\left(h(X_s)(1+V(X_s))\right)e^{-(r(m_1-1)-\lambda)s}ds < +\infty
	\end{equation} there exists a full $P$-measure set $\Omega^{(x)}$ satisfying that:
	\begin{enumerate}
		\item [i.] For any $\omega \in \Omega^{(x)}$ one has
	\begin{equation}\label{eq:convas}
	\lim_{t \rightarrow +\infty} \frac{\xi^{(x)}_t(B)(\omega)}{\E_x(\xi_t(B'))}= \frac{\nu(B)}{\nu(B')}\cdot D^{(x)}_\infty (\omega)
	\end{equation} for all pairs $B,B' \in \mathcal{C}_X$ satisfying $\nu(\partial B)=0$ and $\nu(B')> 0$, where $\mathcal{C}_X$ is given by \eqref{eq:defcx}. 
	\item [ii.] For any $\omega \in \Omega^{(x)} \cap \Lambda^{(x)}$ one has
	\begin{equation} \label{eq:convas2}
	\lim_{t \rightarrow +\infty} \frac{\xi^{(x)}_t(B)(\omega)}{\xi^{(x)}_t(B')(\omega)} = \frac{\nu(B)}{\nu(B')}
	\end{equation} for all pairs $B,B' \in \mathcal{C}_X$ with $\nu(\partial B) = \nu(\partial B')=0$ and $\nu(B')> 0$. 
	\end{enumerate} 
\end{theorem}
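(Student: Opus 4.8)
The plan is to follow the classical lattice-plus-gap-filling scheme for strong laws for branching processes, adapted to the present generality. Write $\tilde W_t(B):=h(x)^{-1}e^{-(r(m_1-1)-\lambda)t}\xi^{(x)}_t(B)$. Since $\xi$ is $\lambda$-positive we may take $p\equiv 1$, and the many-to-one lemma together with \eqref{A2} gives $h(x)^{-1}e^{-(r(m_1-1)-\lambda)t}\E_x(\xi_t(B'))=\nu(B')+s_{B'}(x,t)\to\nu(B')$ by part (iii) of (A2). Hence it suffices to produce a full $P$-measure set $\Omega^{(x)}$, not depending on $B$, on which $\tilde W_t(B)\to\nu(B)D^{(x)}_\infty$ for every continuity set $B\in\mathcal{C}_X$: dividing by $\nu(B')+s_{B'}(x,t)$ then gives (i), and taking the ratio of the convergences for $B$ and for $B'$ gives (ii), which is licit on $\Lambda^{(x)}$ because there the denominator tends to $\nu(B')D^{(x)}_\infty>0$. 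The independence of $\Omega^{(x)}$ of $B$ is obtained by fixing a countable convergence-determining subfamily of continuity sets in $\mathcal{C}_X$, proving the convergence for those, and sandwiching a general continuity set $B\in\mathcal{C}_X$ between finite unions $B_-\subseteq B\subseteq B_+$ from that family with $\nu(B_+\setminus B_-)$ arbitrarily small --- possible since $\nu(\partial B)=0$, $h$ is continuous, and $\nu$ restricted to each $\{h\geq c\}$ is finite because $\nu(h)<\infty$ --- and using $\tilde W_t(B_-)\leq\tilde W_t(B)\leq\tilde W_t(B_+)$.

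\emph{Lattice times.} Fix $\delta>0$ and a large integer $N$, denote by $(\F_t)_{t\geq0}$ the natural filtration of $\xi^{(x)}$, and decompose the population at time $n\delta$ over its ancestors at time $(n-N)\delta$. Conditioning on $\F_{(n-N)\delta}$, the many-to-one lemma and \eqref{A2} yield
\begin{gather*}
\E_x\bigl(\tilde W_{n\delta}(B)\mid\F_{(n-N)\delta}\bigr)=\nu(B)\,D_{(n-N)\delta}+R_{n,N},\\
R_{n,N}=\frac{e^{-(r(m_1-1)-\lambda)(n-N)\delta}}{h(x)}\sum_{u\in\overline{\xi}_{(n-N)\delta}}h(u_{(n-N)\delta})\,s_B\bigl(u_{(n-N)\delta},N\delta\bigr).
\end{gather*}
Splitting the sum over $\{u_{(n-N)\delta}\in J_m\}$ and its complement, parts (iii)--(iv) of (A2) give $|R_{n,N}|\leq D^*\sup_{J_m}|s_B(\cdot,N\delta)|+\overline{s}_B\,D^{J_m^c}_{(n-N)\delta}$, where $D^*:=\sup_k D_{k\delta}<\infty$ a.s.\ and $D^{J_m^c}_s$ is the part of $D_s$ coming from particles whose position lies outside $J_m$. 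For the conditional variance, the many-to-two lemma (Lemmas \ref{lema:mt1}--\ref{lema:mt2}) together with (V2)--(V4) yields a deterministic constant $C_N<\infty$ with $\E_y(\xi_{N\delta}(B)^2)\leq C_N\,h(y)(1+V(y))$ for all $y\in J$, whence by the many-to-one lemma and (V2)
\begin{align*}
\E_x\!\left[\mathrm{Var}\bigl(\tilde W_{n\delta}(B)\mid\F_{(n-N)\delta}\bigr)\right]
&\leq\frac{C_N}{h(x)^2}\,e^{-2(r(m_1-1)-\lambda)(n-N)\delta}\,\E_x\!\left[\sum_u h(u_{(n-N)\delta})(1+V(u_{(n-N)\delta}))\right]\\
&\lesssim\ e^{-(r(m_1-1)-\lambda)n\delta},
\end{align*}
which is summable in $n$ since $r(m_1-1)>\lambda$. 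Standard $L^2$-martingale arguments (applied along the arithmetic progressions of common difference $N$, along which the $\tilde W_{n\delta}(B)-\E_x(\tilde W_{n\delta}(B)\mid\F_{(n-N)\delta})$ form martingale differences) then give $\tilde W_{n\delta}(B)-\E_x(\tilde W_{n\delta}(B)\mid\F_{(n-N)\delta})\to0$ a.s. Combining this with $D_{n\delta}\to D^{(x)}_\infty$ a.s., letting $N\to\infty$ and then $m\to\infty$, and using the uniform-integrability fact that $\lim_m\limsup_k D^{J_m^c}_{k\delta}=0$ a.s.\ --- which follows by comparison with the Malthusian supermartingale of the population killed when a particle's $h$-value leaves $[1/m,m]$, a supermartingale dominated by the uniformly integrable martingale $D$ whose limit increases to $D^{(x)}_\infty$ as $m\to\infty$ because under $\tilde P_x$ the motion never leaves $J$ --- we conclude $\tilde W_{n\delta}(B)\to\nu(B)D^{(x)}_\infty$ a.s., for every fixed $\delta>0$.

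\emph{Filling the gaps.} For $t\in[n\delta,(n+1)\delta)$ one decomposes $\tilde W_t(B)$ over the particles alive at time $n\delta$. Over the length-$\delta$ window each of these has, with conditional probability tending to $1$ as $\delta\to0$, only a controlled number of descendants (branching occurs at rate $r$), all of them close to its position by right-continuity of the motion and continuity of $h$; together with $\nu(\partial B)=0$ this lets one sandwich $\tilde W_t(B)$ between $(1-\varepsilon_\delta)\tilde W_{n\delta}(B^-_\delta)$ and $(1+\varepsilon_\delta)\tilde W_{n\delta}(B^+_\delta)$ for continuity sets $B^-_\delta\subseteq B\subseteq B^+_\delta$ with $\nu(B^\pm_\delta)\to\nu(B)$ and $\varepsilon_\delta\to0$. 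Controlling the error uniformly over $n$ (for fixed $\delta$) rests on the Doob maximal inequality for $D$ over $[n\delta,(n+1)\delta]$ and on a second-moment estimate for the ``extra'' descendants created in the gap; integrating this estimate over the short time-window and over the decomposition is where the reinforced condition $\overline{\Phi}_x<\infty$ (the $h(1+V)$-weighted analogue of $\Phi_x<\infty$) is used. Letting first $n\to\infty$ (invoking the lattice step) and then $\delta\to0$ along $\delta=1/j$ yields $\tilde W_t(B)\to\nu(B)D^{(x)}_\infty$ a.s.\ as $t\to\infty$, simultaneously for all continuity sets $B\in\mathcal{C}_X$, which completes the proof.

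I expect the gap-filling to be the main obstacle. Unlike $D$, the process $t\mapsto\tilde W_t(B)$ is neither a martingale nor monotone, so there is no off-the-shelf control of its oscillations, and one must argue that over a short window the exponentially large population neither overbranches nor drifts across $\partial B$ in a way that is uniform over all particles and all (infinitely many) windows at once --- precisely the point where the continuity of $h$ (assumed here, in contrast to Theorems \ref{theo:main2}--\ref{theo:main}) and the stronger integrability $\overline{\Phi}_x<\infty$ rather than merely $\Phi_x<\infty$ are needed. A secondary technical point is making the null set $\Omega^{(x)}$ valid for all admissible pairs $(B,B')$ simultaneously, handled by the convergence-determining-class argument of the first paragraph.
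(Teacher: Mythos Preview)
The paper does not include a proof of this theorem; it states only that the argument ``goes much along the lines of that of similar results found in \cite{englander2010, ChenShio2007}'' and refers to the extended preprint for details. Your proposal is precisely that scheme --- lattice-time convergence via conditional second moments and the many-to-few lemmas, then gap-filling using continuity of $h$ together with $\nu(\partial B)=0$ and the stronger integrability $\overline{\Phi}_x<\infty$ --- so you are following the same route the paper invokes.
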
 

The proof of Theorem \ref{theo:main5} goes much along the lines of that of similar results found in \cite{englander2010, ChenShio2007}, so again we shall choose not to include it here and refer to \cite[Section 8]{JonckSag} for complete details. Finally, we note that Theorem \ref{theo:main5} can be combined with Theorem \ref{theo:main3} in order to obtain \eqref{eq:convas2} for any $\omega \in \Omega^{(x)} \cap \Theta^{(x)}$ whenever $\xi^{(x)}$ is strongly supercritical.

\subsection{Sketches of the proofs}

We conclude this section by discussing the main ideas in the proofs for each of our results.

The proof of Theorem \ref{theo:main2} essentially boils down to explicitly computing $\E_x(D_t^2)$ for each $x \in J$ by means of the many-to-two lemma (Lemma \ref{lema:mt2} below). Using the fact that the process $(M^{(x)}_t)_{t \geq 0}$ is a mean-one martingale for all $x \in J$, we will be able to show that for all $t \geq 0$
$$
\E_x(D_t^2)=\E_x(M_t^2)e^{-r(m_1-1)t} + (m_2-m_1)r\int_0^t \E_x(M_s^2)e^{-r(m_1-1)s}ds
$$ from where a simple analysis will then yield the result. 

As for the proof of Theorem \ref{theo:main}, the main step is to show that for any $x \in J$ and pair $B,B' \in \mathcal{C}_X$ with $\nu(B')\neq 0$ one has 
\begin{equation} \label{eq:proof1}
\lim_{t \rightarrow +\infty} \E_x(W^2_t(B,B')) = \left[\frac{\nu(B)}{\nu(B')}\right]\Phi_x.
\end{equation} To see this, we will use the many-to-few lemmas together with the fact that the function $p$ in (A2) has subexponential growth (see Lemma \ref{lema:p} below) to show that
\begin{equation} \label{eq:proof2}
\E_x(W^2_t(B,B')) \approx (m_2-m_1)r \int_0^t \frac{\E_x(P^2_{X_s}(X_{t-s} \in B))}{P_x^2(X_t \in B')} e^{-r(m_1-1)s}ds,
\end{equation} where the notation $a(t) \approx b(t)$ here means that $a(t)-b(t) \rightarrow 0$ as $t \rightarrow +\infty$. On the other hand, the asymptotic formula in (A2) allows us to write for each $s \in [0,t]$
$$
\frac{\E_x(P^2_{X_s}(X_{t-s} \in B))}{P_x^2(X_t \in B')}
\sim \left[\frac{p(t-s)}{p(t)}\right]^2 \frac{1}{\nu^2(B')} \E_x(M_s^2 (\nu(B)+s_B(X_s,t-s))^2),
$$where $a(t) \sim b(t)$ now means that $\frac{a(t)}{b(t)} \rightarrow 1$ as $t$ tends to infinity. Thus, the limit \eqref{eq:proof1} will follow once we use our assumptions to show that:
\begin{enumerate}
	\item [i.] The behavior of the integral in the right-hand side of \eqref{eq:proof2} is dictated by the bulk and not by its tail, i.e. only the integral until some time $T \ll t$ is relevant.
	\item [ii.] For $s \leq T$ we have 
	$$
	\left[\frac{p(t-s)}{p(t)}\right]^2 \sim 1 \hspace{1cm}\text{ and }\hspace{1cm}\E_x(M_s^2 (\nu(B)+s_B(X_s,t-s))^2) \sim \nu^2(B)\E_x(M_s^2).
	$$ 
\end{enumerate} 
Now, having shown \eqref{eq:proof1}, it is then clear that $W_t^{(x)}(B,B')$ will not converge in $L^2$ if $\Phi_x=+\infty$. On the other hand, since 
$$
\left\|W^{(x)}_t(B,B') - \frac{\nu(B)}{\nu(B')}D_t^{(x)}\right\|_{L^2} = \E_x(W_t^2(B,B')) -2\left[\frac{\nu(B)}{\nu(B')}\right]\E_x(W_t(B,B')D_t) + \left[\frac{\nu(B)}{\nu(B')}\right]^2\E_x(D_t),
$$ if $\Phi_x < +\infty$ then, by \eqref{eq:proof1} and Theorem \ref{theo:main2}, the $L^2$-convergence will follow once we show that
$$
\lim_{t \rightarrow +\infty} \E_x(W_t(B,B')D_t) = \left[\frac{\nu(B)}{\nu(B')}\right] \Phi_x,
$$ which can be done in the same manner as for \eqref{eq:proof1}. To conclude, \eqref{eq:ks2} then follows from elementary properties of convergence in probability.

On the other hand, the proof of Theorem \ref{theo:main3} consists of three steps:
\begin{enumerate}
	\item [i.] Prove that if $h \neq \eta,\mathbf{1}$ is a fixed point of $G$ then $\eta(x) < h(x)< 1$ for all $x \in J$.
	\item [ii.] Show that the process is strongly supercritical if and only if conditionally on non-extinction every set of the form $\tilde{J}_n:=\{y \in J : \Phi_y \leq n\}$ for $n \in \N$ is visited infinitely many times, and these times go all the way up to infinity, i.e. $\limsup_{t \rightarrow +\infty} \xi^{(x)}(\tilde{J}_n) > 0$.
	\item [iii.] In the event that $\limsup_{t \rightarrow +\infty} \xi^{(x)}(\tilde{J}_m)>0$ for some $n \in \N$, the amount of particles inside any given set $B \in \mathcal{B}_X$ with $\nu(B)>0$ diverges as $t \rightarrow +\infty$. 
\end{enumerate} Now, if $h$ is any fixed point of $G$ aside from $\mathbf{1}$, by elementary properties of $G$ and definition of $\eta$ one can show that for any $\varepsilon > 0$
\begin{align}
h(x) &= \liminf_{n \rightarrow +\infty} \E_x\left( \prod_{u \in \overline{\xi}_n} h(u_n)\right)\nonumber\\ 
& \leq \liminf_{n \rightarrow +\infty} \left(P_x(N_n=0) + \E_x\left( (1-\varepsilon)^{\xi_n(B_\varepsilon)} \mathbbm{1}_{\{N_n \neq 0\}}\right)\right)\nonumber\\ 
& = \eta(x) + \liminf_{n \rightarrow +\infty} \E_x\left( (1-\varepsilon)^{\xi_n(B_\varepsilon)} \label{eq:proof3} \mathbbm{1}_{\Theta}\right),
\end{align} where $B_\varepsilon := \{x \in J : h(x)<1-\varepsilon\}$. By (i) we know there exists $\varepsilon > 0$ sufficiently small such that $\nu(B_\varepsilon)>0$ so that, by (ii) and (iii), one can conclude that the rightmost term in \eqref{eq:proof3} is zero and thus $h(x)\leq \eta(x)$ which, by (i) again, implies that $h=\eta$. This argument shows that if $\xi^{(x)}$ is strongly supercritical for some $x \in J$ then $G$ has exactly two fixed points, $\eta$ and $\mathbbm{1}$. The reverse implication is obtained from (ii) by reformulating strong supercriticality in terms of equalities between certain fixed points of $G$. The other implications in the statement of Theorem \ref{theo:main3} are then immediate.  

The proof of (i) is a standard argument using the irreducibility of $\xi$, while the proof of (ii)-(iii) relies on a coupling argument dominating $\xi$ from below on the event $\{\limsup_{t \rightarrow +\infty} \xi^{(x)}(\tilde{J}_m)>0\}$ by a suitable sequence of independent single-type supercritical Galton-Watson processes, for which we already know that survival implies divergence of the number of individuals.

Finally, Proposition \ref{prop:lyapunov1} follows from the fact (which is shown in Proposition \ref{prop:Lyapunov} below) that if $V$ is a Lyapunov functional for $X$ then $\mu(g) < +\infty$ for any $\B_J$-measurable $g:J \rightarrow \R$ such that $\|\frac{g}{1+V}\|_\infty < +\infty$ and, furthermore, there exist constants $C,\gamma > 0$ such that for all $x \in J$, $t \geq 0$ and any $g$ as above one has
\begin{equation} \label{eq:V}
|\tilde{\E}_x(g(X_t)) - \mu(g)| \leq C (1+V(x)) e^{-\gamma t} \left\|\frac{g-\mu( g)}{1+V}\right\|_\infty.
\end{equation} We use \eqref{eq:V} with $g=h$ to show that $\E(M_t^2)=\tilde{\E}_x(M_t) < +\infty$ for all $t \geq 0$ and then with $g=\frac{\mathbbm{1}_B}{h}$ for any $B$ as in \eqref{eq:defcx} to obtain (A2) from the formula for $s_B$ in \eqref{eq:defsb} but with $p(t)\equiv 1$ (which holds since $X$ is $\lambda$-positive). This concludes the proof.

\section{Examples and applications} \label{sec:examples}

We now illustrate our results through a series of examples. We present first the case of generic ergodic motions, which fall in the category of $0$-positive processes, and then proceed on to study four different models with $\lambda$-positive motions for $\lambda > 0$. Finally, we conclude in Section \ref{sec:bbm} with our results for the Branching Brownian Motion with a negative drift and absorption at the origin, which is a canonical example of a system with an underlying motion which is not $\lambda$-positive and constitutes our most significant and novel contribution. Further details as well as the verification that all of our required assumptions are met in each of the examples can be found in the extended version of the article, \cite[Section 9]{JonckSag}.  

\subsection{Ergodic motions}\label{sec:cwa}

Suppose that $X$ is a motion without absorbing states, i.e. $\overline{J}=J$, which is ergodic and has stationary probability distribution $\nu$ on $(J,\B_J)$. In this case, it is easy to check that $X$ is $0$-positive and verifies \eqref{A2} with $h \equiv 1$, $p \equiv 1$, $\lambda = 0$ for any $B \in \B_J$ such that 
$$
s_B(x,t):= P_x(X_t \in B) - \nu(B) \underset{t \rightarrow +\infty}{\longrightarrow} 0.
$$ One can then show that Theorems \ref{theo:main2}-\ref{theo:main} hold with $\mathcal{C}_X$ given by 
\begin{equation}
\label{eq:c}
\mathcal{C}_X:=\left\{ B \in \B_J : \lim_{t \rightarrow +\infty} s_B(\cdot,t) = 0 \text{ uniformly over compact sets of $J$}\right\}.
\end{equation} In this case, each random variable $D^{(x)}_\infty$ satisfies 
$$
\E_x(D_\infty)=1 \hspace{2cm}\text{ and }\hspace{2cm}\E_x(D_\infty^2)=:\Phi_x=\frac{m_2-m_1}{m_1-1}.
$$ Since there is no absorption here and $\sup_{x \in J} \Phi_x < +\infty$, the corresponding branching dynamics is immediately strongly supercritical. Thus, if in addition $X$ is irreducible in the sense of (B3) then Corollary \ref{cor:convnu} holds and $D^{(x)}_\infty > 0$ on $\Theta^{(x)}$. Finally, if $X$ admits a Lyapunov functional (as defined in Definition \ref{def:lyapunov}) which is bounded over compact subsets of $J$ then the contents of Theorem \ref{theo:main5} can also be shown to hold. See \cite[Section 3.1]{JonckSag} for details.

\subsection{Subcritical Galton-Watson process}\label{sec:gw} Let us now consider $\lambda$-positive motions with $\lambda > 0$.
As a first example, let $X$ be a continuous-time Galton-Watson process, i.e. a process on $\overline{J}:=\N_0$ with transition rates $q$ given for any $x \in \N_0$ and $y \in \N^*:=\{-1\} \cup \N_0$ by
$$
q(x,x+y):=x \rho(y), 
$$ where $\rho$ is some probability vector in $\N^*$ representing the offspring distribution of each individual in the branching process (minus $1$). We assume that $X$ is subcritical, i.e. that $-\lambda:= \sum_y y \rho(y) <0$, so that $X$ is almost surely absorbed at $0$. 
It is well-known, see \cite{seneta2006}, that in this case $X$ is $\lambda$-positive with associated eigenfunction $h$ given by $h(x) \propto x$ and that $\nu$ is a finite measure assigning positive mass to all $x \in \N$, although an explicit expression for $\nu$ is, in general, \mbox{not known.} 

If $\rho$ has a finite second moment then $D^{(x)}$ is bounded in $L^2$ and thus the contents of Theorems \ref{theo:main2}-\ref{theo:main} all hold in this case. Furthermore, using $h$ as a Lyapunov functional for $X$, one can exploit \eqref{eq:compvc} to verify that $\xi$ is strongly supercritical.  Thus, if we $\rho(-1) \in (0,1)$ then $X$ is irreducible (in the classical sense) and, as a consequence, we obtain Theorem \ref{theo:main3} and Corollary \ref{cor:convnu}. Finally, if $\rho$ has a finite \textit{third} moment then \eqref{eq:phibarra} is satisfied and thus the contents of Theorem \ref{theo:main5} also hold.

\subsection{Subcritical contact process on $\Z^d$ (modulo translations)}\label{sec:cp}

Let $\mathcal{P}_{f}(\Z^d)$ denote the class of all finite subsets of $\Z^d$ and $Y=(Y_t)_{t \geq 0}$ be the contact process on $\Z^d$, i.e. the Markov process on $\mathcal{P}_{f}(\Z^d)$ with transition rates $q$ given, for any $\sigma \in \mathcal{P}_{f}(\Z^d)$ and $x \in \Z^d$, by 
$$
q(\sigma,\sigma \cup \{x\}) = \gamma |\{ y \in \sigma : |y-x|_1=1\}| \hspace{1cm} \text{ and }\hspace{1cm} q(\sigma,\sigma-\{x\}) = \mathbbm{1}_{\sigma}(x),
$$ where $\gamma > 0$ is a fixed constant called the \textit{infection rate}. Notice that $Y$ is translation invariant, i.e. for any $\sigma \in \mathcal{P}_f(\Z^d)$ and $x \in \Z^d$ one has
$$ 
Y^{(\sigma)} \sim Y^{(\sigma + \{x\})} + \{-x\},
$$ so that there are no finite measures $\nu \neq 0$ verifying \eqref{A2} for $Y$.  
This can be fixed if one considers the process modulo translations. Indeed, say that two non-empty sets $\sigma,\sigma' \in \mathcal{P}_f(\Z^d)$ are \textit{equivalent} if they are translations of each other. Let $J$ denote the quotient space obtained from this equivalence and, for any non-empty $\sigma \in \mathcal{P}_f(\Z^d)$, let $\langle \sigma \rangle$ denote its corresponding equivalence class in $J$. Also, set $\langle \emptyset \rangle := \emptyset$ and $\overline{J}:= J \cup \{ \emptyset\}.$ Then, for any $\zeta \in J$ we define $X^{(\zeta)}$ by taking $\sigma_\zeta \in \mathcal{P}_f(\Z^d)$ such that $\langle \sigma_\zeta \rangle = \zeta$ and setting 
$
X^{(\zeta)}_t:= \langle  Y^{(\sigma_\zeta)}_t \rangle$. We call $X$ the \textit{contact process on $\Z^d$ modulo translations}.

It is well-known, see \cite{BezGri1990}, that $J$ is an irreducible class for the process $X$ and that there exists $\gamma_c=\gamma_c(d) > 0$ such that the absorbing state $\emptyset$ is reached almost surely if and only if $\gamma \leq \gamma_c$. Moreover, it has been shown that for $\gamma < \gamma_c$ then $\lambda, h$ and $\nu$ as in Assumptions \ref{assumpG} indeed exist and that the process is in fact $\lambda$-positive, see \cite{ferrari1996,andjel2015}, although neither $h$ nor $\nu$ are explicitly known. What is known, however, is that $\nu$ is finite and it assigns positive mass to every $x \in J$, see \cite{ferrari1996}. Using $h$ again as a Lyapunov functional, one can show that for any subcritical infection rate $\gamma < \gamma_c$ the Malthusian martingale $D^{(\zeta)}$ is bounded in $L^2$ for all $\zeta$ and that the branching dynamics $\xi^{(\zeta)}$ is strongly supercritical, so that the contents of all our main results always hold for this model.

\subsection{Recurrent Ornstein-Uhlenbeck process killed at $0$} \label{sec:rou}

Consider a $1$-dimensional recurrent Ornstein-Ulhenbeck process which is killed at $0$, i.e. the stopped process $X=(X_t)_{t \geq 0}$ on $\R_{\geq 0}$ defined as $X_t:=Y_{t \wedge \tau_0}$, where $\tau_0:=\inf\{t \geq 0 : Y_t = 0\}$ and $Y$ is given by SDE 
$$
dY_t = -\lambda Y_t dt + dB_t, 
$$ for $B$ a standard ($1$-dimensional) Brownian motion and $\lambda > 0$ a fixed parameter called the \textit{drift}. 
The generator of $X$ has as domain the set of $C^2$-functions vanishing at $0$ (due to the killing at $0$) and is defined for any such $f$ as 
$$
\mathcal{L}[f](x):= \frac{1}{2}f''(x) - \lambda xf'(x).
$$ 
It is well-known, see \cite{lladser2000}, that $X$ is $\lambda$-positive with eigenfunction $h(x):= \sqrt{\frac{4\lambda}{\pi}} x$ (when $h,\nu$ are normalized so that $\nu$ is a probability measure and $\nu(h)=1$) and eigenmeasure $\nu$ having density 
$
f_X(x):= 2\lambda xe^{-\lambda x^2} \mathbbm{1}_{{(0,+\infty)}}(x)
$ with respect to the Lebesgue measure $l$ on $\R$. As in the previous example, the Malthusian martingale $D^{(x)}$ is bounded in $L^2$ for all $x$ and the branching dynamics $\xi^{(x)}$ is strongly supercritical, so that the contents of all our main results always hold for this model.

\subsection{Transient Ornstein-Uhlenbeck process} \label{sec:tou}
This example was considered originally in \cite{englander2010}. Let $X$ be the process with generator $\mathcal{L}$ defined for any $f \in C^2(\R)$ as 
$$
\mathcal{L}[f](x):= \frac{1}{2}\sigma^2 f''(x) +\lambda xf'(x),
$$ 
where $\lambda,\sigma^2 > 0$ are called the \textit{drift} and \textit{dispersion} coefficients, respectively. In this case, one can check that $X$ is $\lambda$-positive with $h(x) \propto \exp\{-\frac{\lambda}{\sigma^2}x^2\}$ and $\nu$ given by the Lebesgue measure on $\R$. Unlike previous examples, here $\nu$ is an infinite measure so that, in particular, the asymptotics in \eqref{A2} does not hold for every $B \in \B_{\R}$ (for example, it does not hold if $B=\R$ because in this case \eqref{A2} cannot decay exponentially) but, by the transience of $X$, it does hold for any $B$ which is bounded. Still, this is enough to yield the following results:

\begin{itemize}
		\item [i.]  $D^{(x)}$ converges almost surely and in $L^2$ to some random variable $D^{(x)}_\infty \in L^2$ for each $x \in \R$.
		\item [ii.] The contents of Theorem \ref{theo:main} hold with $
		\mathcal{C}_X$ the class of \textit{bounded} Borel subsets of $\R$.
		\item [iii.] $\xi^{(x)}$ is \textbf{not} strongly supercritical for any $x$. In fact, $0 < \sigma(x) < 1$ for all $x$ so that $\sigma \neq \eta,\mathbf{1}$.
		\item [iv.] $\overline{\Phi}_x < +\infty$ for all $x$, so that the contents of Theorem \ref{theo:main5} also hold in this case.
	\end{itemize}
Parts (i) and (iv) of Theorem \ref{theo:rou} can also be found in \cite{englander2010} (together with the $L^1$ convergence of $W^{(x)}(B,B')$), whereas (ii) and (iii) are new results. On the other hand, we note that it follows from (iv) that for any compact set $\mathcal{K} \subseteq \R$ and $x \in \R$ we have 
$$
P_x\left( \limsup_{t \rightarrow +\infty} \xi_t(\mathcal{K}) > 0\right) \geq P_x (\Lambda) > 0
$$ but, however, from (iii) that $\xi^{(x)}$ is not strongly supercritical. This confirms our statement about the notion of local survival introduced in \eqref{eq:ls} being, in general, weaker than strong supercriticality.

\subsection{Brownian motion with drift killed at the origin} \label{sec:bbm}

Finally, we conclude with an example of an underlying motion which is not $\lambda$-positive. Consider a Brownian motion with negative drift $-c < 0$ killed at the origin, i.e. the process $X$ given by the generator 
$$
\mathcal{L}[f](x):=\frac{1}{2}f''(x) - c f'(x)
$$ defined for all $C^2$-functions $f$ vanishing at $0$. It is shown in \cite{polak2012} that $X$ satisfies Assumptions \ref{assumpG} for $\lambda:=\frac{c^2}{2}$ and $\mathcal{C}_X:=\B_{(0,+\infty)}$, the class of Borel subsets of $(0,+\infty)$, and with $h(x):= \frac{1}{\sqrt{2\pi \lambda^2}} xe^{cx}$, $p(t):=t^{-\frac{3}{2}}$ and $\nu$ given by the density $f_X(x):=2\lambda x e^{-cx}\mathbbm{1}_{(0,+\infty)}(x)$ with respect to Lebesgue. However, $X$ is not a $\lambda$-positive motion since one can easily verify that $\nu(h)=+\infty$.\footnote{Nor is it $\lambda'$-positive for any other $\lambda'\neq \frac{c^2}{2}$ because otherwise \eqref{A2} would not hold for $\lambda=\frac{c^2}{2}$ and all $B \in \B_{(0,+\infty)}$.} Nonetheless, our approach still applies and can thus obtain the following:

\begin{itemize}
		\item [i.] For each $x > 0$ the martingale $D^{(x)}$ is bounded in $L^2$ if and only if $r(m_1-1) > c^2=2\lambda$ (note that this is strictly contained in the supercritical region $r(m_1-1)>\lambda$). In this case, the contents of Theorems \ref{theo:main2}-\ref{theo:main} all hold.
		\item [i.] $\xi^{(x)}$ is strongly supercritical for each $x$. In particular, the contents of Theorem \ref{theo:main3} and Corollary \ref{cor:convnu} hold.  
	\end{itemize}

\section{The many-to-few lemmas}\label{sec:mtf}

An element which will prove to be crucial in the proof of Theorem \ref{theo:main} is the ability to compute the first and second moments of the process $|\xi|=(|\xi_t|)_{t \geq 0}$ in exact form. We do this with the help of the many-to-few lemmas we state below. For simplicity, we will state only a reduced version of the many-to-one and many-to-two lemmas, which are all we need. For the many-to-few lemma in its full generality (and its proof) we refer to \cite{harris2015}. 

First, we state the many-to-one lemma. It receives this name because it reduces expectations involving random sums over \textit{many} particles, i.e. over all those in $\xi_t$, to expectations involving only \text{one} particle. 

\begin{lemma}[Many-to-one Lemma] \label{lema:mt1} Given a nonnegative measurable function $f:(\overline{J},\B) \rightarrow \R_{\geq 0}$, for every $t \geq 0$ and $x \in J$ we have
	$$
	\E_x \left( \sum_{u \in \overline{\chi}_t} f( u_t ) \right) = e^{r(m_1 - 1)t}\E_x \left( f(X_t) \right).
    $$
\end{lemma}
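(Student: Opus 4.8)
The plan is to prove this by the standard first-moment decomposition over the branching structure, exploiting the fact that the branching rate $r$ is constant and independent of the particle's position. First I would condition on the number and timing of the branching events along each ancestral line. More precisely, fix $t\geq 0$ and $x\in J$ and let me argue by induction on the number of branching events that have occurred in $[0,t]$, or equivalently by setting up a renewal-type equation in $t$. Let $F(t,x):=\E_x\left(\sum_{u\in\overline{\chi}_t}f(u_t)\right)$. Decomposing on the time $\tau$ of the first branch event of the initial particle (which is $\mathrm{Exp}(r)$, independent of the motion $X$), on the event $\{\tau>t\}$ the single particle is still alive and located at $X_t$, contributing $\E_x(f(X_t);\tau>t)=e^{-rt}\E_x(f(X_t))$; on the event $\{\tau=s\leq t\}$ the particle splits at position $X_s$ into $m$ i.i.d. copies (with $\E(m)=m_1$), each of which, by the strong Markov property of $X$ and the branching property, independently evolves as the whole dynamics started afresh from $X_s$ for the remaining time $t-s$. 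This yields the equation
$$
F(t,x) = e^{-rt}\E_x(f(X_t)) + \int_0^t re^{-rs}\, m_1\, \E_x\big(F(t-s,X_s)\big)\, ds.
$$

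The key step is then to verify that $F(t,x)=e^{r(m_1-1)t}\E_x(f(X_t))=:G(t,x)$ solves this integral equation, together with a uniqueness argument so that the solution is forced. For the verification, using the Markov property of $X$ one has $\E_x\big(G(t-s,X_s)\big)=e^{r(m_1-1)(t-s)}\E_x\big(\E_{X_s}(f(X_{t-s}))\big)=e^{r(m_1-1)(t-s)}\E_x(f(X_t))$, so the right-hand side becomes
$$
e^{-rt}\E_x(f(X_t)) + m_1 e^{r(m_1-1)t}\E_x(f(X_t))\int_0^t re^{-rs}e^{-r(m_1-1)(t-s)}\,ds,
$$
and a direct computation of $\int_0^t re^{-rs}e^{-r(m_1-1)(t-s)}ds$ collapses this to $e^{r(m_1-1)t}\E_x(f(X_t))$, as required. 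For uniqueness, I would restrict first to bounded $f$ (so all quantities are finite) and observe that the map $H\mapsto e^{-rt}\E_\cdot(f(X_t))+\int_0^t re^{-rs}m_1\E_\cdot(H(t-s,X_s))ds$ is a contraction on $C([0,T])$ in a suitably weighted sup-norm (e.g. with weight $e^{-\beta t}$ for $\beta$ large, or by a Gronwall/Picard iteration on finite horizons), hence has a unique solution; the general nonnegative case then follows by monotone convergence, truncating $f$ by $f\wedge n$ and letting $n\to\infty$ on both sides, both sides being monotone in $f$.

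Alternatively, and perhaps more cleanly, I would instead cite the many-to-few lemma of \cite{harris2015} directly: its $k=1$ specialization is exactly this statement (with the spine motion being $X$ itself, since the branching rate is constant so the spinal change of measure does not alter the motion law and contributes only the deterministic factor $e^{r(m_1-1)t}$ coming from the size-biasing of the offspring law at each of the Poisson-many branch points). In the write-up I expect the only mild obstacle to be bookkeeping: making sure the random sum is genuinely over $\overline{\chi}_t$ (all living particles, including those that have been absorbed by $X$ but not yet killed by having zero offspring) rather than over $\overline{\xi}_t$, and keeping track of the convention $\prod_{u\in\emptyset}=1$ and the $f$-values at absorbing states — but since $f$ is only assumed $\B$-measurable and nonnegative on all of $\overline{J}$, and $X_t$ ranges over $\overline{J}$, this causes no difficulty. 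I would therefore present the short self-contained proof via the renewal equation above and remark that it is the $k=1$ case of \cite{harris2015}.
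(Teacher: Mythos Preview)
Your proposal is correct, and in fact it goes well beyond what the paper does: the paper does not prove this lemma at all. It merely states the many-to-one and many-to-two lemmas and writes ``For the many-to-few lemma in its full generality (and its proof) we refer to \cite{harris2015}.'' So your alternative suggestion---to cite \cite{harris2015} and note that this is the $k=1$ specialization---is precisely the route the paper takes, and your self-contained renewal-equation argument is a genuine addition that the paper omits.

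One small algebraic slip: in the displayed expression after factoring out $e^{r(m_1-1)t}$, the exponent inside the integral should read $e^{-r(m_1-1)s}$ rather than $e^{-r(m_1-1)(t-s)}$ (since $e^{r(m_1-1)(t-s)}=e^{r(m_1-1)t}e^{-r(m_1-1)s}$). With that correction the integral is $\int_0^t r e^{-rm_1 s}\,ds = \tfrac{1}{m_1}(1-e^{-rm_1 t})$, and the verification goes through exactly as you claim. This is a typo, not a gap in the argument.
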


Next, we state the many-to-two lemma, used to compute correlations between pairs of particles. Before we can do so, however, we must introduce the notion of $2$-spine for our branching dynamics. 

\begin{definition} Consider the following coupled evolution on $\overline{J}$:
	\begin{enumerate}
		\item [i.] The dynamics starts with 2 particles, both located initially  at some $x \in J$, whose positions evolve together randomly, i.e. describing the same random trajectory, according to $\mathcal{L}$. 
		\item [ii.] The particles wait for an independent random exponential time $E$ of parameter $(m_2-m_1)r$ and then split at their current position, each of them then evolving independently afterwards according to $\mathcal{L}$. 
	\end{enumerate} 
	Now, for $i=1,2$, let $X^{(i)}=(X^{(i)}_t)_{t \geq 0}$ be the process which indicates the position of the $i$-th particle. We call the pair $(X^{(1)},X^{(2)})$ a $2$-spine associated to the triple $(m,r,\mathcal{L})$ and $E$ its splitting time. 
	\end{definition}
	
The many-to-two lemma then goes as follows.

\begin{lemma}[Many-to-two Lemma] \label{lema:mt2} Given any pair of measurable functions $f,g:(\overline{J},\B) \rightarrow \R_{\geq 0}$, for every $t \geq 0$ and $x \in J$ we have
		$$
		\E_x \left( \sum_{u,v \in \overline{\chi}_t} f( u_t )g(v_t) \right) = e^{2r(m_1 - 1)t}\E_x \left( e^{[\text{Var}(m) + (m_1-1)^2]r (E \wedge t)}f(X^{(1)}_t)g(X^{(2)}_t)\right),
		$$ where $(X^{(1)},X^{(2)})$ is a $2$-spine associated to $(m,r,\mathcal{L})$ and $E$ denotes its splitting time.
	\end{lemma}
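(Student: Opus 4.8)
The plan is to reduce the two-particle expectation to the many-to-one lemma (Lemma~\ref{lema:mt1}) by decomposing the double sum over $\overline{\chi}_t\times\overline{\chi}_t$ according to the genealogy of the pair $(u,v)$. First I would split off the diagonal,
$$
\E_x\Big(\sum_{u,v\in\overline{\chi}_t}f(u_t)g(v_t)\Big)=\E_x\Big(\sum_{u\in\overline{\chi}_t}(fg)(u_t)\Big)+\E_x\Big(\sum_{\substack{u,v\in\overline{\chi}_t\\ u\neq v}}f(u_t)g(v_t)\Big),
$$
and observe that the first term equals $e^{r(m_1-1)t}\E_x(f(X_t)g(X_t))$ directly by Lemma~\ref{lema:mt1} applied to the nonnegative function $fg$. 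For the off-diagonal term, every ordered pair $u\neq v$ of particles present at time $t$ has a most recent common ancestor, which branches at some time $\sigma\le t$; at that branch event $u$ and $v$ descend from two \emph{distinct} children. Summing over branch events $b$ occurring before time $t$ and over ordered pairs $(i,j)$, $i\neq j$, of children of $b$ therefore gives $\sum_{u\neq v}f(u_t)g(v_t)=\sum_{b}\sum_{i\neq j}\big(\sum_{u\succ i}f(u_t)\big)\big(\sum_{v\succ j}g(v_t)\big)$, where $u\succ i$ means $u$ is a descendant of the $i$-th child of $b$.

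Next, writing $\mu_f(y,\tau):=\E_y\big(\sum_{u\in\overline{\chi}_\tau}f(u_\tau)\big)=e^{r(m_1-1)\tau}\E_y(f(X_\tau))$ for the many-to-one functional, I would take conditional expectations given the history up to the branch time $\sigma_b$ of $b$ and its offspring count $m_b$: by the branching property the $m_b$ subtrees rooted at the children of $b$ are conditionally i.i.d.\ copies of the dynamics started at the branch position, so applying Lemma~\ref{lema:mt1} inside each of the two relevant subtrees and summing over the $m_b(m_b-1)$ ordered pairs $(i,j)$ yields, after averaging over $m_b$ (which is independent of the history) and using $\E(m(m-1))=m_2-m_1$, that the off-diagonal term equals $\E_x\big(\sum_b (m_2-m_1)\,\mu_f(\,\cdot\,,t-\sigma_b)\,\mu_g(\,\cdot\,,t-\sigma_b)\big)$, a sum over branch events evaluated at the branch positions. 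To compute this I would invoke a ``many-to-one for branch events'': for nonnegative $\psi$, $\E_x\big(\sum_{b:\ \sigma_b\le t}\psi(\sigma_b,u^{(b)}_{\sigma_b})\big)=\int_0^t r\,e^{r(m_1-1)s}\,\E_x(\psi(s,X_s))\,ds$, which follows from a first-branch-time decomposition together with the renewal identity $e^{r(m_1-1)\tau}=e^{-r\tau}+m_1\int_0^\tau r e^{-rs}e^{r(m_1-1)(\tau-s)}\,ds$ (equivalently, from Lemma~\ref{lema:mt1} for a suitably marked process, or from the fact that the branch events along a lineage form a rate-$r$ Poisson process independent of the motion). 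This gives
$$
\E_x\Big(\sum_{u\neq v}f(u_t)g(v_t)\Big)=(m_2-m_1)\,r\int_0^t e^{r(m_1-1)s}\,\E_x\big(\mu_f(X_s,t-s)\,\mu_g(X_s,t-s)\big)\,ds.
$$

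It then remains to check that these two contributions add up to the claimed $2$-spine expression. Conditioning the splitting time $E$ on $\{E>t\}$ and $\{E\le t\}$ and using that before $E$ the two spine particles share one $\mathcal{L}$-trajectory while afterwards they run independent ones, one obtains, with $\beta:=(m_2-m_1)r$, $c:=[\text{Var}(m)+(m_1-1)^2]r$ and $\theta_f(y,\tau):=\E_y(f(X_\tau))=e^{-r(m_1-1)\tau}\mu_f(y,\tau)$,
$$
e^{2r(m_1-1)t}\E_x\big(e^{c(E\wedge t)}f(X^{(1)}_t)g(X^{(2)}_t)\big)=e^{2r(m_1-1)t+(c-\beta)t}\E_x(fg(X_t))+\beta\,e^{2r(m_1-1)t}\int_0^t e^{(c-\beta)s}\E_x\big(\theta_f(X_s,t-s)\theta_g(X_s,t-s)\big)ds.
$$
Using $\text{Var}(m)=m_2-m_1^2$ one finds $c-\beta=-(m_1-1)r$, hence $2r(m_1-1)+(c-\beta)=r(m_1-1)$; substituting this, together with $\theta_f\theta_g=e^{-2r(m_1-1)(t-s)}\mu_f\mu_g$, makes the first summand equal $e^{r(m_1-1)t}\E_x(fg(X_t))$, i.e.\ the diagonal contribution, and the integral equal $(m_2-m_1)r\int_0^t e^{r(m_1-1)s}\E_x(\mu_f\mu_g(X_s,t-s))ds$, i.e.\ the off-diagonal contribution, so the three expressions agree term by term and the lemma follows.

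The routine-but-non-negligible matters are the measurability of the genealogical decomposition and the interchanges of summation and expectation, all legitimized by Tonelli's theorem since every term is nonnegative (first for bounded $f,g$, then for general nonnegative $f,g$ by monotone approximation, with finiteness of all the quantities involved guaranteed by $m_2<+\infty$). The genuinely delicate step is the branch-event many-to-one identity; an alternative that avoids isolating it is to characterize \emph{both} sides of the lemma as the unique $t$-locally bounded solution of the first-branch renewal equation $\Psi(x,t)=e^{-rt}\E_x(fg(X_t))+\int_0^t r e^{-rs}\E_x\big[m_1\Psi(X_s,t-s)+(m_2-m_1)\mu_f(X_s,t-s)\mu_g(X_s,t-s)\big]ds$ and to conclude by Picard iteration, uniqueness following by iterating the estimate for $\Psi_1-\Psi_2$. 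A fully general version of this statement, the many-to-few lemma, with complete proof is given in \cite{harris2015}.
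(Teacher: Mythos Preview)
Your argument is correct: the diagonal/off-diagonal split, the genealogical decomposition via most recent common ancestors, the branch-event many-to-one identity, and the matching with the $2$-spine expression via the computation $c-\beta=-(m_1-1)r$ all check out, and the caveats you raise (Tonelli, the renewal-equation alternative for the branch-event identity) are the right ones.

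However, there is nothing to compare against: the paper does not give its own proof of this lemma. In Section~\ref{sec:mtf} the authors explicitly state that they only record the many-to-one and many-to-two lemmas and refer to \cite{harris2015} for the proof of the general many-to-few lemma, of which this is a special case. What you have written is essentially the standard direct derivation of the $k=2$ case of that result, so your proof is not a departure from the paper's approach but rather a self-contained version of the argument the paper chose to outsource to the reference.
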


\section{Proof of Theorem \ref{theo:main2}}\label{sec:proof2}

We first compute $\E_x( D_t^2)$ for every $t \geq 0$ and $x \in J$. Note that, by the many-to-two lemma and the definition of $2$-spine, a straightforward computation (see the proof of Theorem \ref{theo:main} for details) yields that
\begin{align*}
\E_x(D_t^2) &= \frac{1}{h^2(x)}\E_x \left( \sum_{u,v \in \overline{\xi}_t} h(u_t)h(v_t)e^{-2(r(m_1-1)-\lambda)t}\right)\\
&= \frac{e^{2\lambda t}}{h^2(x)}\E_x \left( e^{[\text{Var}(m) + (m_1-1)^2]r (E \wedge t)}h(X^{(1)}_t)h(X^{(2)}_t)\right)\\
& = [1]_t + [2]_t
\end{align*} where
$$
[1]_t := \frac{e^{2\lambda t}}{h^2(x)}\E_x \left( e^{[\text{Var}(m) + (m_1-1)^2]r (E \wedge t)}h^2(X^{(1)}_t)\mathbbm{1}_{\{E > t\}}\right) = \E_x(M_t^2)e^{-r(m_1-1)t}
$$
and 
$$
[2]_t := (m_2-m_1)r\int_0^t \E_x ( M_s^2\, \E_{X_s}^2 (M_{t-s}) ) e^{-r(m_1-1)s}ds. 
$$ Now, by (A1) we have that $M$ is a mean-one martingale so that $\E_{X_s}(M_{t-s})=1$ for all $s \in [0,t]$. Thus, we obtain that
$$
[2]_t= (m_2-m_1)r\int_0^t \E_x (M_s^2) e^{-r(m_1-1)s}ds.
$$ Recalling the definition of $\Phi_x$, it is then clear that $[2]_t \rightarrow \Phi_x$ as $t \rightarrow +\infty$ and, on the other hand, that whenever $\Phi_x < +\infty$ we have that $\liminf_{t \rightarrow +\infty}[1]_t \rightarrow 0$, so that $\liminf_{t \rightarrow +\infty} \E_x(D_t^2)= \Phi_x$. But, since $\lim_{t \rightarrow +\infty} \E_x(D_t^2)$ always exists (although it can be $+\infty$, in principle) because $\left(D^{(x)}\right)^2$ is a submartingale, we conclude that
$$
\lim_{t \rightarrow +\infty} \E_x( D_t^2) = \Phi_x.
$$ Being $D^{(x)}$ a martingale, this implies that it converges in $L^2$ if and only if $\Phi_x<+\infty$ and that, in this case, one has $\E_x(D_\infty^2)=\Phi_x$. Moreover, since $\E_x(D_t)=1$ for all $t \geq 0$, it also follows that $\E_x(D_\infty)=1$ and so this concludes the proof. 

\section{Proof of Theorem \ref{theo:main}}\label{sec:proof}

This section contains the proof of Theorem \ref{theo:main}. We will split the proof into two parts:

\begin{enumerate}
	\item [I.] First, we will show that, given $B,B' \in \mathcal{C}_X$ with $\nu(B')> 0$, for any $x \in J$ one has
	$$
	\lim_{t \rightarrow +\infty} \E_x(W^2_t(B,B')) = \left[\frac{\nu(B)}{\nu(B')}\right]\Phi_x.
    $$ 
	\item [II.] Then, we use (I) to conclude the convergence in \eqref{eq:conv1} whenever $\Phi_x < +\infty$. In particular, the convergence $W_t^{(x)}(B',B') \overset{P}{\longrightarrow} D_\infty^{(x)}$ together with \eqref{eq:conv1} yields that for any $B \in \mathcal{C}_X$ 
	$$
	\nu_t^{(x)}(B,B') \overset{P}{\longrightarrow} \frac{\nu(B)}{\nu(B')}
	$$ as $t \rightarrow +\infty$, conditionally on the event $\{D_\infty^{(x)} > 0\}$. 
\end{enumerate}
We dedicate a separate subsection to each parts, but begin first with a section devoted to proving two auxiliary lemmas to be used throughout the proof.

\subsection{Preliminary lemmas}

The first lemma we shall require is the following.

\begin{lemma}\label{lema:A3} If assumption (A1) holds then for any $T > 0$ we have
	\begin{equation} \label{eq:A3-ii}
	\lim_{n \rightarrow +\infty} \left[\sup_{t \in [0,T]} \E_x \left( M_t^2 \mathbbm{1}_{\{X_t \notin J_n\}}\right)\right] = 0.
	\end{equation}
\end{lemma}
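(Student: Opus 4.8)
The plan is to exploit the square-integrability of $M^{(x)}$ guaranteed by (A1) together with a uniform integrability argument on a compact time interval. First, I would fix $T>0$ and $x \in J$, and observe that by (A1)(ii) the process $(M_t^{(x)})_{t \in [0,T]}$ is a square-integrable martingale, so $(M_t^2)_{t \in [0,T]}$ is a submartingale; in particular $\E_x(M_t^2) \leq \E_x(M_T^2) < +\infty$ for every $t \in [0,T]$, and the family $\{M_t^2 : t \in [0,T]\}$ is uniformly integrable since it is dominated in $L^1$ by the Doob-maximal control: $M_t^2 \leq (\sup_{s \in [0,T]} M_s)^2 =: \mathcal{M}_T^2$, and by Doob's $L^2$-inequality $\E_x(\mathcal{M}_T^2) \leq 4\,\E_x(M_T^2) < +\infty$.

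Next, I would write, for each $n \in \N$,
$$
\sup_{t \in [0,T]} \E_x\!\left( M_t^2 \mathbbm{1}_{\{X_t \notin J_n\}} \right) \leq \E_x\!\left( \mathcal{M}_T^2 \, \mathbbm{1}_{A_n}\right), \qquad A_n := \left\{ \sup_{t \in [0,T]} \mathbbm{1}_{\{X_t \notin J_n\}} = 1 \right\} = \{\, \exists\, t \in [0,T] : X_t \notin J_n \,\}.
$$
Here I am using that on the complement of $A_n$ the indicator $\mathbbm{1}_{\{X_t \notin J_n\}}$ vanishes for all $t \in [0,T]$, so the supremum over $t$ of $M_t^2 \mathbbm{1}_{\{X_t \notin J_n\}}$ is bounded by $\mathcal{M}_T^2 \mathbbm{1}_{A_n}$, and then taking expectations. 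The key point is now that $A_n \downarrow A_\infty$ as $n \to \infty$, where $A_\infty = \bigcap_n A_n$. I claim $P_x(A_\infty) = 0$: recalling $J_n = \{x \in J : \tfrac1n \leq h(x) \leq n\}$, membership in $A_\infty$ would force the existence, for every $n$, of some time $t_n \in [0,T]$ with $h(X_{t_n}) \notin [\tfrac1n, n]$; but since $X$ has cadlag paths on $\overline{J}$ and $h$ is $\B$-measurable with $h|_J > 0$, on the event that $X$ stays in $J$ throughout $[0,T]$ the trajectory $t \mapsto X_t$ has compact closure in the Skorokhod sense and $h \circ X$ is bounded away from $0$ and $\infty$ on $[0,T]$ almost surely (a null set of paths escaping to the boundary or to infinity is the only obstruction, and on that null set one simply has $M_t = 0$ eventually by $h|_{\partial_*\overline{J}} = 0$). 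Hence $P_x(A_\infty) = 0$, and by dominated convergence (dominating by $\mathcal{M}_T^2 \in L^1(P_x)$, which is where square-integrability in (A1) is essential) we get $\E_x(\mathcal{M}_T^2 \mathbbm{1}_{A_n}) \to 0$, which yields \eqref{eq:A3-ii}.

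The main obstacle, and the point requiring care, is the claim $P_x(A_\infty) = 0$: one must argue that a cadlag $\overline{J}$-valued trajectory which is not absorbed cannot have $h(X_t)$ approach $0$ or $+\infty$ along a sequence of times in the fixed compact interval $[0,T]$ without this corresponding to a genuinely exceptional (null) set of paths. If $h$ is assumed continuous and $J$ locally compact separable (as in the standing assumptions), this follows since a cadlag path on $[0,T]$ has relatively compact range, forcing $h \circ X$ into a compact subinterval of $(0,+\infty)$ on the event $\{X_t \in J \ \forall t \in [0,T]\}$, while on the complementary event the martingale $M$ hits $0$ and stays there, so $M_t^2 \mathbbm{1}_{\{X_t \notin J_n\}}$ is eventually $0$ there too. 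If $h$ is only measurable, the author likely supplements this with an additional mild regularity hypothesis; in any case, once $P_x(A_\infty)=0$ is in hand, the dominated-convergence conclusion is immediate.
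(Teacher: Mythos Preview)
Your approach has a genuine gap at the step where you claim $P_x(A_\infty)=0$ (or, equivalently, that $\mathcal{M}_T^2\,\mathbbm{1}_{A_n}\to 0$ almost surely). The event $A_n=\{\exists\,t\in[0,T]:X_t\notin J_n\}$ picks up \emph{both} the large-$h$ regime $\{h(X_t)>n\}$ and the small-$h$ regime $\{h(X_t)<\tfrac1n\}$. In the models under consideration absorption occurs with positive probability: on the event that $X$ is absorbed at some time $\tau\leq T$ one has $h(X_t)=0$ for all $t\geq\tau$, hence $X_t\notin J_n$ for every $n$ and every such $t$, so this event lies in $A_\infty$. But $\mathcal{M}_T^2=\sup_{s\leq T}M_s^2$ records the history \emph{before} absorption and is typically strictly positive there, so $\mathcal{M}_T^2\,\mathbbm{1}_{A_n}$ does \emph{not} tend to zero on that event and your dominated-convergence argument does not close. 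Your parenthetical remark that ``$M_t^2\mathbbm{1}_{\{X_t\notin J_n\}}$ is eventually $0$'' on the absorption event is correct for the original quantity but does not rescue the bound $\mathcal{M}_T^2\mathbbm{1}_{A_n}$, which is what you actually need to control.

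The paper fixes precisely this by splitting $\{X_t\notin J_n\}$ into the two pieces and treating them differently: on $\{h(X_t)<\tfrac1n\}$ one has $M_t^2=\frac{h^2(X_t)}{h^2(x)}e^{2\lambda t}\leq \frac{e^{2\lambda T}}{h^2(x)}\cdot\frac{1}{n^2}$, so this contribution is bounded by a deterministic constant times $n^{-2}$ and no probabilistic argument is needed at all; on $\{h(X_t)>n\}\subseteq\{\sup_{s\leq T}h(X_s)>n\}$ one bounds by $\sup_{s\leq T}M_s^2\cdot\mathbbm{1}_{\{\sup_{s\leq T}h(X_s)>n\}}$ and observes that the indicator converges to $\mathbbm{1}_{\{\sup_{s\leq T}M_s^2=+\infty\}}$, which is null by the Doob $L^2$-bound. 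Thus the small-$h$ side is handled by smallness of the \emph{integrand} rather than of the \emph{event}, and this is exactly the idea your argument is missing. Note also that this decomposition requires only measurability of $h$, whereas your route (even if patched) would need additional continuity assumptions on $h$ that (A1) does not provide.
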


\begin{proof} Notice that for any $t \in [0,T]$ we have the bound
	$$
	\E_x \left( M_t^2 \mathbbm{1}_{\{X_t \notin J_n\}}\right) \leq \frac{e^{2\lambda T}}{h^2(x)} \cdot \frac{1}{n^2} + \E_x \left( \left(\sup_{s \in [0,T]} M_s^2\right) \mathbbm{1}_{\{ \sup_{s \in [0,T]} h(X_s) > n\}}\right)  
	$$ so that it will suffice to show that 
	\begin{equation} \label{eq:lemaconv2}
	\lim_{n \rightarrow +\infty} \E_x \left( \left(\sup_{s \in [0,T]} M_s^2\right) \mathbbm{1}_{\{ \sup_{s \in [0,T]} h(X_s) > n\}}\right)=0.
	\end{equation} But since $\sup_{s \in [0,T]} M^2_s$ is $P_x$-integrable by Doob's inequality and (A1), and we also have that 
	$$
	\lim_{n \rightarrow +\infty} \mathbbm{1}_{\{ \sup_{s \in [0,T]} h(X_s) > n\}} = \mathbbm{1}_{\{ \sup_{s \in [0,T]} M_s^2 = +\infty\}},
	$$ where the right-hand side is now $P_x$-almost surely null by the integrability of $\sup_{s \in [0,T]} M^2_s$, using the dominated convergence theorem we can conclude \eqref{eq:lemaconv2}. 
\end{proof}

The second lemma concerns the asymptotic behavior of the function $p$ in \eqref{A2}.

\begin{lemma}\label{lema:p} The function $p$ from assumption (A2) satisfies:
	\begin{itemize}
		\item [i.] $p$ has subexponential growth, i.e. $\lim_{t \rightarrow +\infty} \frac{ \log p(t)}{t} = 0$.
		\item [ii.] If we define the function $q(t_1,t_2) := \frac{p(t_2)}{p(t_1+t_2)}$ then for any $C > 0$ we have that
		$$
		\limsup_{t_2 \rightarrow +\infty}\left[\sup_{t_1 \in [0,Ct_2]} q(t_1,t_2)\right]=:q_C < +\infty \hspace{1cm}\text{ and }\hspace{1cm}
		\lim_{t_2 \rightarrow +\infty} \left[\sup_{t_1 \in [0,C]} |q(t_1,t_2)-1|\right] = 0.
		$$
	\end{itemize}
\end{lemma}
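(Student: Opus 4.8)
\textbf{Proof plan for Lemma \ref{lema:p}.} The two claims both flow from the defining property of regular variation of $p$, namely that $\ell(a)=\lim_{t\to+\infty}p(at)/p(t)$ exists and is finite for every $a>0$, together with the classical structure theory for such functions (Karamata). The plan is as follows.

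For part (i), I would invoke the representation theorem for regularly varying functions: since $p$ is regularly varying at infinity, there is some index $\rho\in\R$ (determined by $\ell(a)=a^\rho$) such that $p(t)=t^\rho L(t)$ with $L$ slowly varying, and slowly varying functions satisfy $\log L(t)/\log t\to 0$. Consequently $\log p(t)=\rho\log t+\log L(t)=o(t)$, which is exactly subexponential growth. If one prefers to stay self-contained and avoid quoting Karamata, the same conclusion can be extracted directly: fix $a>1$; then $p(a^{n}t_0)/p(t_0)=\prod_{k=1}^{n}\big(p(a^{k}t_0)/p(a^{k-1}t_0)\big)$, and since each factor tends to the finite limit $\ell(a)$, one gets $\log p(a^{n}t_0)\le Cn$ for large $n$, i.e. $\log p(t)=O(\log t)=o(t)$ along the geometric sequence, and a monotonicity/uniform-convergence argument bridges the gaps between consecutive powers of $a$.

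For part (ii), write $q(t_1,t_2)=p(t_2)/p(t_1+t_2)$. The key fact I would use is the \emph{uniform convergence theorem} for regularly varying functions: $p(at)/p(t)\to\ell(a)$ holds uniformly for $a$ in compact subsets of $(0,+\infty)$. For the second assertion, with $t_1\in[0,C]$ put $a=a(t_1,t_2)=(t_1+t_2)/t_2\in[1,1+C/t_2]\subseteq[1,1+C]$ eventually; then $q(t_1,t_2)=p(t_2)/p(a t_2)\to 1/\ell(a)$ uniformly, and since $a\to 1$ as $t_2\to\infty$ uniformly in $t_1\in[0,C]$ and $\ell$ is continuous with $\ell(1)=1$, we conclude $\sup_{t_1\in[0,C]}|q(t_1,t_2)-1|\to 0$. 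For the first assertion, with $t_1\in[0,Ct_2]$ the ratio $a=(t_1+t_2)/t_2$ now ranges over $[1,1+C]$, a fixed compact set; by uniform convergence $p(t_2)/p(at_2)\to 1/\ell(a)$ uniformly over this range, so $\sup_{t_1\in[0,Ct_2]}q(t_1,t_2)$ stays bounded — its $\limsup$ is at most $\sup_{a\in[1,1+C]}1/\ell(a)$, which is finite because $\ell$ is positive and continuous on the compact set $[1,1+C]$ (positivity of $\ell$ following from $\ell(a)\ell(1/a)=\ell(1)=1$).

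The main obstacle, and the only genuinely nontrivial input, is the uniform convergence theorem for regularly varying functions — the passage from pointwise existence of $\ell(a)$ to convergence that is uniform on compact sets of $a$. This is standard (it is typically deduced from monotonicity considerations or from an application of the Steinhaus/Egorov-type argument once one knows $p$ is eventually of a fixed sign and measurable), but since the paper does not assume $p$ monotone, some care is needed; I would either cite the standard reference (e.g. Bingham--Goldie--Teugels, \emph{Regular Variation}) or, if a self-contained proof is preferred, first establish the Karamata representation $p(t)=\exp\big(\eta(t)+\int_{t_0}^{t}\varepsilon(s)/s\,ds\big)$ with $\eta(t)\to\eta_\infty$ and $\varepsilon(s)\to\rho$, from which both the subexponential bound in (i) and the uniform estimates in (ii) are immediate by direct estimation of the integral.
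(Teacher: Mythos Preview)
Your proposal is correct and follows essentially the same route as the paper: both invoke the decomposition $p(t)=t^\rho L(t)$ with $L$ slowly varying, reduce (i) to the subexponentiality of $L$ via Karamata's representation, and obtain (ii) from the uniform convergence theorem after rewriting $q(t_1,t_2)$ as $p(t_2)/p\big(t_2(1+t_1/t_2)\big)$ with $1+t_1/t_2$ ranging over the compact interval $[1,1+C]$. The only cosmetic difference is that the paper first reduces to the slowly varying part $L$ and applies uniform convergence there, whereas you apply it directly to $p$; the content is the same.
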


\begin{proof} It is well-known that if $p$ is a regularly varying function at infinity then there exits $\alpha \in \R$ such that $p(t)= t^{\alpha} L(t)$ for some \textit{slowly} varying function $L$, i.e. a function $L :(0,+\infty) \rightarrow (0,+\infty)$ such that for any $a > 0$
	\begin{equation}
	\label{eq:limitingl}
	\lim_{t \rightarrow +\infty} \frac{L(at)}{L(t)} = 1.
\end{equation} Since it is straightforward to check that the function $t^\alpha$ satisfies (i) and (ii) above, it suffices to verify these claims for any slowly varying function $L$.
	
	To see this, we notice that by Karamata's representation theorem for any such $L$ there exists some $T > 0$ such that for every $t > T$ one has 
	$$
	L(t) = \exp\left\{ g_1(t) + \int_T^t \frac{g_2(s)}{s}ds\right\}
	$$ where $g_1,g_2 : (T,+\infty) \rightarrow \R$ are two bounded measurable functions which respectively satisfy
	$$
	\lim_{t \rightarrow +\infty} g_1(t) = b \in \R \hspace{2cm}\text{ and }\hspace{2cm}\lim_{t \rightarrow +\infty} g_2(t)=0.
	$$ From this representation (i) immediately follows. On the other hand, since the \mbox{convergence in \eqref{eq:limitingl}} is uniform whenever $a$ is restricted to compact sets of $(0,+\infty)$, (ii) now follows immediately from the writing
	$$
	\frac{L(t_2)}{L(t_1+t_2)} = \frac{L(t_2)}{L\left(t_2\left(1 + \frac{t_1}{t_2}\right)\right)} 
	$$ and \eqref{eq:limitingl}, using that $1+\frac{t_1}{t_2} \in [1,1+C]$. This concludes the proof.
\end{proof}

\subsection{Part I}\label{sec:part1} Assume first that $\Phi_x <+\infty$ and let us show that then one has
\begin{equation}\label{eq:phi1}
\lim_{t \rightarrow +\infty} \E_x(W^2_t(B,B'))=\left[\frac{\nu(B)}{\nu(B')}\right]^2 \Phi_x.
\end{equation} To this end, take $t > 0$ and notice that 
\begin{equation}
\label{eq:exp}
\E_x(W_t^2(B,B')) = \frac{\E_{x}(\xi^2_t(B))}{\E_x^2(\xi_t(B'))}.
\end{equation} Let us compute the expectations in the right-hand side of \eqref{eq:exp} by using the many-to-few lemmas. On the one hand, by the many-to-one lemma we have that
	\begin{equation}
	\label{eq:mto1}
	\E_x(\xi_t(B')) = \E_x \left( \sum_{u \in \overline{\chi}_t} \mathbbm{1}_{\{u_t \in B'\}} \right) = e^{r(m_1-1)t}P_x( X_t \in B').
	\end{equation} On the other hand, the many-to-two lemma yields
	\begin{align*}
	\E_x(\xi^2_t(B)) &= \E_x \left( \sum_{u,v \in \overline{\xi}_t} \mathbbm{1}_{\{u_t \in B\}}\mathbbm{1}_{\{v_t \in B\}}\right) \\
	& = e^{2r(m_1-1)t}\E_x \left(  \mathbbm{1}_{\{X^{(1)}_t \in B\}}\mathbbm{1}_{\{X^{(2)}_t \in B\}}e^{[\text{Var}(m)+(m_1-1)^2]r(E\wedge t)}\right).
	\end{align*} By separating in cases depending on whether $E > t$ or not, we obtain
	$$
	\E_x(\xi^2_t(B)) = (1)_{t} + (2)_{t},
	$$ where
	$$
	(1)_{t} := e^{r(m_2-1)t} \E_x (\mathbbm{1}_{\{X_t \in B\}} \mathbbm{1}_{\{E > t\}}) 
	$$ and 
	$$
	(2)_{t}: = e^{2r(m_1-1)t}\E_x\left(  \mathbbm{1}_{\{X^{(1)}_t \in B\}}\mathbbm{1}_{\{X^{(2)}_t \in B\}}e^{[\text{Var}(m)+(m_1-1)^2]rE}\mathbbm{1}_{\{E \leq t\}}\right).
	$$ Now, using the independence of $E$ from the motion of the $2$-spine, the Markov property yields
	$$
	(1)_{t}= e^{r(m_1-1)t}P_x(X_t \in B)
	$$ and
	$$
	(2)_{t}= (m_2-m_1)re^{2r(m_1-1)t} \int_0^t P_x\left(X^{(1),s}_t \in B, X^{(2),s}_t \in B\right)e^{-r(m_1-1)s}ds,
	$$ where $X^{(1),s}$ and $X^{(2),s}$ are two coupled copies of the Markov process $X$ which coincide \mbox{until time $s$} and then evolve independently after $s$. If we condition on the position of these coupled processes at time $s$, then we obtain
	\begin{equation} \label{eq:2th}
	(2)_{t} = (m_2-m_1)re^{2r(m_1-1)t} \int_0^t \E_x( P^2_{X_s}(X_{t-s} \in B) )e^{-r(m_1-1)s}ds.
	\end{equation} Now, from \eqref{eq:mto1} and \eqref{A2} we conclude that
	$$
	[1]_{t}:= \frac{(1)_{t}}{\E^2_x(\xi_t(B'))} = \frac{P_x(X_t \in B)}{P_x(X_t \in B')} \cdot \frac{1}{\E_x(\xi_t(B'))} = \frac{\nu(B)+s_B(x,t)}{[\nu(B')+s_{B'}(x,t)]^2} \cdot \frac{1}{h(x)p(t)}e^{-(r(m_1-1)-\lambda)t}
	$$ which, by (i) in Lemma \ref{lema:p}, shows that if $t$ is taken sufficiently large then 
	\begin{equation}
	\label{eq:bound1}
	|[1]_t| \leq 2 \frac{\nu(B)}{\nu(B')} \frac{e^{-\frac{1}{2}(r(m_1-1)-\lambda)t}}{h(x)}.
	\end{equation} Similarly, one has that
	$$
	[2]_{t}:=\frac{(2)_{t}}{\E^2_x(\xi_t(B))}= \int_0^t \Psi_{x,t}(s) ds,
	$$
	where
	$$
	\Psi_{x,t}(s):=(m_2-m_1)r \frac{\E_x( P^2_{X_s}(X_{t-s} \in B))}{P^2_x( X_t \in B')}e^{-r(m_1-1)s}.
	$$ To treat the term $[2]_{t}$ we split the integral into three separate parts, i.e. for $\alpha \in (0,1)$ and $T > 0$ to be specified later we write
	$$
	[2]_{t} = [a]_{t} + [b]_{t} + [c]_t
	$$ where
	$$
	[a]_{t} := \int_{\alpha t}^t \Psi_{x,t}(s)ds \hspace{1cm} 
	[b]_{t} := \int_{T}^{\alpha t} \Psi_{x,t}(s)ds \hspace{1cm}[c]_t:=\int_0^T \Psi_{x,t}(s)ds  . 
	$$ The first term $[a]_{t}$ deals with the case in which $s \rightarrow t$ and the asymptotics in \eqref{A2} for $P_{y}(X_{t-s} \in B)$ may not hold. In this case, if $\alpha$ is taken close enough to $1$ then $[a]_{t}$ tends to zero as $t \rightarrow +\infty$. 
	Indeed, notice that 
	$$
	\E_x( P_{X_s}^2(X_{t-s} \in B) ) \leq \E_x(P_{X_s}( X_{t-s} \in B)) = P_x( X_t \in B)
    $$ by the Markov property, so that
    $$
    [a]_{t} \leq (m_2-m_1)r \frac{P_x(X_t \in B)}{P_x^2(X_t \in B')} \int_{\alpha t}^\infty  e^{-r(m_1-1)s} ds = \frac{m_2-m_1}{m_1-1}e^{(1-\alpha)r(m_1-1)t} [1]_{t}.
    $$
	By recalling \eqref{eq:bound1}, if $\alpha$ is chosen sufficiently close to $1$ and $t$ taken sufficiently large then 
	\begin{equation}\label{eq:bounda}
	|[a]_{t}| \leq e^{-\frac{1}{4}(r(m_1-1)-\lambda)t}.
	\end{equation} Similarly to $[a]_{t}$, the term $[b]_t$ can also be made arbitrarily small as $t \rightarrow +\infty$ if $T$ is large enough. Indeed, by \eqref{A2} we have that
	\begin{align}
	\Psi_{x,t,h}(s) &= (m_2-m_1)r  \left[\frac{q(s,t-s)}{\nu(B')+s_{B'}(x,t)}\right]^2 \E_x(M_s^2(\nu(B)+s_B(X_s,t-s))^2)e^{-r(m_1-1)s} \label{eq:decomp}
	\\
	& \leq (m_2-m_1)8rq^2_{\frac{\alpha}{1-\alpha}} \left(\frac{\nu(B)+\overline{s}_B}{\nu(B')}\right)^2 \E_x(M^2_s) e^{-r(m_1-1)s} \nonumber
	\end{align}
	if $s \leq \alpha t$ and $t$ is large enough so as to have
	\begin{itemize}
		\item [$\bullet$] $\sup_{y \in J,\,u\geq(1-\alpha)t} s_B(y,u) \leq \overline{s}_B$,
		\item [$\bullet$] $q(s,t-s) \leq 2q_{\frac{\alpha}{1-\alpha}}$ (which can be done by (ii) in Lemma \ref{lema:p} since $\frac{s}{t-s}\leq \frac{\alpha}{1-\alpha}$),
		\item [$\bullet$] $s_{B'}(x,t) \geq -\frac{\nu(B')}{2}$,
	\end{itemize} so that
	\begin{equation} \label{eq:bcot}
	|[b]_{t}| \leq 8q^2_{\frac{\alpha}{1-\alpha}} \left(\frac{\nu(B)+\overline{s}_B}{\nu(B')}\right)^2 \cdot (m_2-m_1) \int_T^\infty \E_x(M^2_s)re^{-r(m_1-1)s}ds.
	\end{equation} Since $\Phi_x < +\infty$, the right-hand side of \eqref{eq:bcot} can be made arbitrarily small if $T$ is chosen sufficiently large depending on $\alpha$. 
	
	Finally, let us treat the last term $[c]_{t}$. By (A2) and (ii) in Lemma \ref{lema:p}, for $s \leq T$ we may write
	$$
	\Psi_{x,t,h}(s) = \frac{(m_2-m_1)r}{\nu^2(B')} (1 + o_t(1)) \E_x(M_s^2(\nu(B)+s_B(X_s,t-s))^2)e^{-r(m_1-1)s}
	$$ where $o_t(1)$ (which depends on $x,t,s$ and $B'$) tends to zero uniformly in $s \leq T$ as $t \rightarrow +\infty$. Thus, we may decompose
	$$
	[c]_{t}=[c_1]_{t} + [c_1^*]_{t}
	$$ with 
	$$
	[c_1^*]_{t}= \frac{m_2-m_1}{\nu^2(B')} \int_0^T \E_x(M_s^2(\nu(B)+s_B(X_s,t-s))^2))re^{-r(m_1-1)s}ds
	$$ and 
	\begin{equation} \label{eq:ccot1}
	|[c_1]_{t}| \leq \frac{(\nu(B)+\overline{s}_B)^2}{\nu^2(B')} \Phi_x \left[\sup_{s \leq T}o_t(1)\right], 
	\end{equation} where the right-hand side of \eqref{eq:ccot1} tends to zero as $t \rightarrow +\infty$ since $\Phi_x < +\infty$. Finally, given $n \in \N$ we decompose $[c_1^*]_{t}$ by splitting the expectation inside into two depending on whether $X_s \in J_n$ or not. More precisely, we write 
	$$
	[c_1^*]_{t}=[c_2]_{t}+[c_2^*]_{t}
	$$ where 
	$$
	[c_2^*]_{t} = \frac{(m_2-m_1)}{\nu^2(B')}\int_0^T \E_x(M_s^2(\nu(B)+s_B(X_s,t-s))^2 \mathbbm{1}_{\{X_s \in J_n\}})re^{-r(m_1-1)s}ds
	$$ and 
	\begin{equation}
	\label{eq:ccot2}
	|[c_{2}]_{t}| \leq \frac{m_2-m_1}{m_1-1} \frac{(\nu(B)+\overline{s}_B)^2}{\nu^2(B')} \sup_{s \in [0,T]} \E_x(M_s^2\mathbbm{1}_{\{X_s \notin J_n\}}).
	\end{equation} Notice that the right-hand side of \eqref{eq:ccot2} is independent of $t$ and tends to zero as $n$ tends to infinity for any fixed $T > 0$ by Lemma \ref{lema:A3}. On the other hand, observe that by (A2) and Lemma \ref{lema:A3}
	$$
	\E_x(M_s^2(\nu(B)+s_B(X_s,t-s))^2) \mathbbm{1}_{\{X_s \in J_n\}}) = \nu^2(B) \E_x(M_s^2)(1+\overline{o}_t(1))
	$$ where the term $\overline{o}_t$ (which depends on $x,n,t,s$ and $B$) tends to zero uniformly in $s \leq T$ as $t \rightarrow +\infty$ since $\sup_{s \in [0,T]} \E_x(M_t^2) \leq 4\E_x(M_T^2)$ by Doob's inequality. By repeating the same argument that lead us to \eqref{eq:ccot1}, we conclude that 
	$$
	[c_{2}^*]_{t,h}=[c_{3}]_{t} + [c_{4}]
	$$ where 
	$$
	[c_4] = \left[\frac{\nu(B)}{\nu(B')}\right]^2 \cdot (m_2-m_1)\int_0^T \E_x(M_s^2)re^{-r(m_1-1)s}ds
	$$ and $|[c_3]_{t}|$ tends to zero as $t \rightarrow +\infty$. Thus, we find that if we write $\Gamma:=\{1,a,b,c_1,c_2,c_3\}$ then 
	\begin{equation}
	\label{eq:cotfinal}
	\left| W^{(x)}_{t}(B,B') - \left[\frac{\nu(B)}{\nu(B')}\right]^2\Phi_x\right| \leq \sum_{i \in \Gamma} |[i]_t| + \left[\frac{\nu(B)}{\nu(B')}\right]^2 \cdot (m_2-m_1)\int_T^\infty \E_x(M_s^2)re^{-r(m_1-1)s}ds.
	\end{equation} By taking $\alpha$ adequately close to $1$, $T$ large enough (depending on $\alpha$) and then $n$ sufficiently large (depending on $T$), the right-hand side of \eqref{eq:cotfinal} can be made arbitrarily small for all $t$ large enough and so \eqref{eq:phi1} follows. 
	
	Now, let us assume that $\Phi_x = +\infty$ and show that $\lim_{t \rightarrow +\infty} \E_x(W^2_t(B,B'))=+\infty$ in this case, proving \eqref{eq:phi1}. To see this, we notice that for any fixed $T > 0$ we have
	$$
	\E_x\left(W^2_t(B,B')\right) \geq [c]_{t,0} = \int_0^T \Psi_{x,t,0}(s)ds.
	$$ If $n$ is chosen large enough so that $\sup_{s \in [0,T]} \E_x( M_s^2 \mathbbm{1}_{\{X_s \notin J_n\}}) < 1$, then for all $t$ large enough to  guarantee that
	\begin{enumerate}
		\item [$\bullet$] $\inf_{s \in [0,T]} \left( \frac{q(s,t-s)}{\nu(B')+s_{B'}(x,t)}\right)^2 \geq \frac{1}{2\nu^2(B')}$
		\item [$\bullet$] $\inf_{s \in [0,T], y \in J_n} (\nu(B)+s_B(y,t-s))^2 \geq \frac{\nu^2(B)}{2},$
	\end{enumerate} by \eqref{eq:decomp} we obtain 
	\begin{align*}
	[c]_{t,0} &\geq \left[\frac{\nu(B)}{\nu(B')}\right]^2\frac{(m_2-m_1)}{4} \int_0^T \E_x(M_s^2 \mathbbm{1}_{\{X_s \in J_n\}})re^{-r(m_1-1)s}ds\\
	\\
	&\geq \left[\frac{\nu(B)}{\nu(B')}\right]^2\frac{(m_2-m_1)}{4} \int_0^T \E_x(M_s^2) re^{-r(m_1-1)s}ds - \left[\frac{\nu(B)}{\nu(B')}\right]^2\frac{(m_2-m_1)}{4(m_1-1)}.
	\end{align*}The right-hand side of this last inequality can be made arbitrarily large by taking $T$ big enough, due to the fact that $\Phi_x=+\infty$. In particular, this implies that 
	$$
	\lim_{t \rightarrow +\infty}\E_x\left(W^2_t(B,B')\right)=+\infty
	$$ and thus concludes the proof of Part I.
	
\subsection{Part II}

We now check that, whenever $\Phi_x < +\infty$, one has 
		$$
		W^{(x)}_t(B,B') \overset{L^2}{\longrightarrow} \frac{\nu(B)}{\nu(B')} \cdot D^{(x)}_\infty.
		$$ for every $B,B' \in \mathcal{C}_X$ with $\nu(B')> 0$.  
Notice that by Theorem \ref{theo:main2} it suffices to show that  
		\begin{equation} \label{eq:kseq}
		\lim_{t \rightarrow +\infty} \left\| W^{(x)}_t(B,B') - \frac{\nu(B)}{\nu(B')} \cdot D^{(x)}_t \right\|_{L^2} = 0.
		\end{equation} Now, observe that
		$$
		\left\| W^{(x)}_t(B,B') - \frac{\nu(B)}{\nu(B')} \cdot D^{(x)}_t \right\|_{L^2}^2 = \E_x(W^2_t(B,B')) - 2\frac{\nu(B)}{\nu(B')}\E_x(W_t(B,B')D_t) + \left[\frac{\nu(B)}{\nu(B')}\right]^2\E_x(D^2_t)
		$$ so that, by \eqref{eq:phi1} and Theorem \ref{theo:main2}, \eqref{eq:kseq} will follow if we show that
		$$
		\lim_{t \rightarrow +\infty} \E_x(W_t(B,B')D_t) = \frac{\nu(B)}{\nu(B')} \Phi_x.
		$$ But this can be done by proceeding exactly as in Part I. We omit the details.
		
		Finally, that 
		$$
		\nu^{(x)}_t(B,B') \overset{P}{\longrightarrow} \frac{\nu(B)}{\nu(B')}
		$$ conditionally on the event $\{D^{(x)}_\infty > 0\}$ follows from the fact that 
		\begin{equation}\label{eq:convpexp}
		\frac{\E_x(\xi_t(B'))}{\xi^{(x)}_t(B')}=\frac{1}{W^{(x)}_t(B',B')}\overset{P}{\longrightarrow}\frac{1}{D^{(x)}_\infty} \hspace{1cm}\text{ and }\hspace{1cm}\frac{\xi^{(x)}_t(B)}{\E_x(\xi_t(B'))}=W^{(x)}_t(B,B') \overset{P}{\longrightarrow} \frac{\nu(B)}{\nu(B')} \cdot D^{(x)}_\infty
		\end{equation} conditionally on the event $\{D^{(x)}_\infty > 0\}$, which in turn follows from \eqref{eq:conv1}. This concludes Part II and thus the proof of Theorem \ref{theo:main}.
		

\section{Proof of Theorem \ref{theo:main3}}\label{sec:theo3}

We also divide the proof of Theorem \ref{theo:main3}, now into four parts. First, we show that $\eta$ and $\sigma$ are indeed fixed points of $G$ and, using (B3), that $\eta$ and $\mathbf{1}$ cannot intersect other fixed points of $G$. Next, we show that (B1-B2) imply that our branching dynamics can be  dominated from below by a supercritical Galton-Watson process. Using this domination, we then show that the notion of strong supercriticality can be reformulated in terms of certain fixed points of the operator $G$. Finally, we use this alternative formulation to show both implications of Theorem \ref{theo:main3}.

\subsection{Part I}

We begin by checking first that both $\eta$ and $\sigma$ are fixed points of $G$.

\begin{proposition} \label{prop:fp}
	The functions $\eta$ and $\sigma$ are fixed points of $G$.	
\end{proposition}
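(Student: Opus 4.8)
The plan is to exploit the branching property of the dynamics at the deterministic time $1$. Let $\F_1$ denote the $\sigma$-algebra generated by the branching dynamics up to time $1$. Conditionally on $\F_1$, the subtrees rooted at the particles $u \in \overline{\chi}_1$ evolve as independent copies of the original process started from their respective positions $u_1$; moreover, a particle which has been absorbed by time $1$ (i.e.\ $u \in \overline{\chi}_1 \setminus \overline{\xi}_1$, so that $u_1 \in \partial_* \overline{J}$) stays at an absorbing state forever, as do all of its descendants, and hence it contributes nothing to $\xi_{1+s}$ for any $s \ge 0$. Writing $\xi^{(u)}$ for the (time-shifted) subtree rooted at $u$, one therefore has $\xi_{1+s} = \sum_{u \in \overline{\xi}_1} \xi^{(u)}_s$ for all $s \ge 0$, with an empty sum interpreted as the null measure. (Throughout we take the measurability of $\eta$ and $\sigma$ for granted, as per the footnote to the statement.)

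First I would treat $\eta$. Since the living particles at any time $t$ with $|\xi_t| = 0$ all sit at absorbing states (or there are none), and absorbing states and the graveyard state $\Delta$ are never left, the event $\{|\xi_t| = 0\}$ is nondecreasing in $t$; hence $\Theta^c = \{\exists\, t \ge 0 : |\xi_t| = 0\}$ and, by the decomposition above, $\Theta^c = \bigcap_{u \in \overline{\xi}_1} \Theta^{(u),c}$, the degenerate case $\overline{\xi}_1 = \emptyset$ (all particles dead or absorbed by time $1$) being correctly captured by the empty-intersection convention since then $|\xi_t| = 0$ for all $t \ge 1$. Conditioning on $\F_1$ and using the conditional independence of the subtrees together with $P_x(\Theta^{(u),c} \mid \F_1) = \eta(u_1)$ for each $u \in \overline{\xi}_1$, we get $P_x(\Theta^c \mid \F_1) = \prod_{u \in \overline{\xi}_1} \eta(u_1)$; taking expectations yields $\eta(x) = \E_x\bigl(\prod_{u \in \overline{\xi}_1} \eta(u_1)\bigr) = G(\eta)(x)$.

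For $\sigma$ the argument is analogous but also uses the almost sure convergence $D_t^{(x)} \to D_\infty^{(x)}$ (valid since $D^{(x)}$ is a nonnegative martingale). From the subtree decomposition one computes, for every $t \ge 1$,
$$
D_t^{(x)} = \frac{e^{-(r(m_1-1)-\lambda)}}{h(x)} \sum_{u \in \overline{\xi}_1} h(u_1)\, D_{t-1}^{(u)},
$$
where $D^{(u)}$ is the Malthusian martingale of the subtree rooted at $u$; letting $t \to +\infty$ (a finite sum, so the limit passes through) gives $D_\infty^{(x)} = \frac{e^{-(r(m_1-1)-\lambda)}}{h(x)} \sum_{u \in \overline{\xi}_1} h(u_1)\, D_\infty^{(u)}$. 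Since $u_1 \in J$ for every $u \in \overline{\xi}_1$, we have $h(u_1) > 0$ by (A1)(i), so $\{D_\infty^{(x)} = 0\} = \bigcap_{u \in \overline{\xi}_1}\{D_\infty^{(u)} = 0\}$ (again consistent with the empty-intersection convention, as $D_\infty^{(x)} = 0$ when $\overline{\xi}_1 = \emptyset$). Conditioning on $\F_1$ and using that, given $\F_1$, the variables $D_\infty^{(u)}$ are independent with $D_\infty^{(u)}$ distributed as $D_\infty$ under $P_{u_1}$, we obtain $P_x(D_\infty = 0 \mid \F_1) = \prod_{u \in \overline{\xi}_1} \sigma(u_1)$, and taking expectations gives $\sigma = G(\sigma)$.

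The only delicate points are bookkeeping ones rather than genuine estimates: one must carefully justify the branching decomposition at time $1$ in the presence of absorption and the graveyard state --- in particular, that particles absorbed by time $1$ together with all of their progeny are irrelevant both for $\Theta$ and for $D_\infty$ (the latter because $h$ vanishes on $\partial_* \overline{J}$) --- and one must verify that the degenerate cases with $\overline{\xi}_1 = \emptyset$ are correctly handled by the conventions $\prod_{u \in \emptyset} = 1$ and $\min_{u \in \emptyset} = +\infty$. I expect this to be the main (though modest) obstacle; no hard analysis is needed.
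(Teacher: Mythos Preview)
Your argument is correct and follows essentially the same route as the paper: both proofs use the branching decomposition at time $1$ to write the relevant event (extinction, respectively $\{D_\infty=0\}$) as the intersection over $u\in\overline{\xi}_1$ of the corresponding events for the independent subtrees, then take expectations to recover $G(\eta)$ and $G(\sigma)$. The paper's version is somewhat more terse---it writes the indicator identity $\mathbbm{1}_{(\Theta^{(x)})^c}=\prod_{u\in\overline{\xi}_1}\mathbbm{1}_{(\Theta^{(u_1)})^c}$ directly rather than passing through $\F_1$---but the content is the same, and your additional care about absorbed particles and the empty case is welcome.
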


\begin{proof}
	Observe that for any $t > 0$ and $x \in J$ we have the relation
	$$
	|\xi_{1+t}^{(x)}| = \sum_{u \in \overline{\xi}^{(x)}_1} |\xi_t^{(u_1)}|, 
	$$ which implies that, for any $t > 0$, $|\xi^{(x)}_{1+t}|$ equals zero if and only if $|\xi_t^{(u_1)}|$ is zero for every $u \in \overline{\xi}_1^{(x)}$. Thus, if we take $t \rightarrow +\infty$ then the former yields 
	\begin{equation}
	\label{eq:eta1}
	\mathbbm{1}_{\left(\Theta^{(x)}\right)^c} = \prod_{u \in \overline{\xi}^{(x)}_1}\mathbbm{1}_{\left(\Theta^{(u_1)}\right)^c}.
	\end{equation} By taking expectations $\E_x$ on the equality in \eqref{eq:eta1}, we obtain that $\eta(x)=G(\eta)(x)$. Furthermore, since this holds for any $x \in J$, we conclude that $\eta$ is a fixed point of $G$.
	
	Now, to see that $\sigma$ is a fixed point of $G$, we observe the analogous relation
	$$
	D_{1+t}^{(x)} = \frac{1}{h(x)}\sum_{u \in \overline{\xi}^{(x)}_1} h(u_1) e^{-(r(m_1-1)-\lambda)} D_t^{(u_1)}
	$$ which, upon taking the limit $t \rightarrow +\infty$, becomes
	$$
	D_{\infty}^{(x)} = \frac{1}{h(x)}\sum_{u \in \overline{\xi}^{(x)}_1} h(u_1) e^{-(r(m_1-1)-\lambda)} D_\infty^{(u_1)}.
	$$ Since $h(y)=0$ if and only if $y \in \partial_* \overline{J}$, that $\sigma$ is a fixed point of $G$ now follows as before.  
\end{proof}

Next, we use irreducibility to see that $\eta$ and $\mathbf{1}$ cannot intersect other fixed points of $G$.

\begin{proposition}\label{prop:fpG}
	Assume that (B3) holds. Then, if $g$ is a fixed point of $G$ we have that:
	\begin{enumerate}
		\item [i.] $\eta(x) \leq g(x) \leq 1$ for all $x \in J$.
		\item [ii.] $g(x) = \eta(x)$ for some $x \in J \Longrightarrow g \equiv \eta$.
		\item [iii.] $g(x) = 1$ for some $x \in J \Longrightarrow g \equiv \mathbf{1}$. 
	\end{enumerate}
\end{proposition}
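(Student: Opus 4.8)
The plan is to attach to each fixed point $g$ of $G$ the bounded process $Z^g_t:=\prod_{u\in\overline{\xi}_t}g(u_t)$ (empty product $=1$). By the branching property and $G(g)=g$ one has $\E_x(Z^g_{n+1}\,|\,\F_n)=\prod_{u\in\overline{\xi}_n}G(g)(u_n)=Z^g_n$, so $(Z^g_n)_{n\in\NN}$ is a $[0,1]$-valued martingale under every $P_x$ and converges a.s.\ to some $Z^g_\infty$; since $G^n(g)(x)=\E_x(Z^g_n)=g(x)$ for all $n$, bounded convergence gives $g(x)=\E_x(Z^g_\infty)$. As $Z^g_n=1$ eventually on the extinction event $\Theta^c$, this yields
\begin{equation*}
g(x)=P_x(\Theta^c)+\E_x\big(Z^g_\infty\,\mathbbm{1}_{\Theta}\big)=\eta(x)+\E_x\big(Z^g_\infty\,\mathbbm{1}_{\Theta}\big),\qquad x\in J,
\end{equation*}
and (i) follows at once from $0\le Z^g_\infty\le 1$. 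The two tools I would use repeatedly are the subtree decomposition $Z^g_\infty=\prod_{u\in\overline{\xi}_n}Z^{g,(u)}_\infty$, $\Theta=\{|\xi_n|\ge 1\}\cap\bigcup_{u\in\overline{\xi}_n}\Theta_u$ (with $\Theta_u$ the survival event of the subtree rooted at $u$), and the many-to-one lemma applied to indicators $\mathbbm{1}_A$, $A\subseteq J$.

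For (iii) I would argue by contradiction: assume $g(x_0)=1$ and $A:=\{y\in J:g(y)<1\}\ne\emptyset$. From $\E_{x_0}(Z^g_n)=1$ and $Z^g_n\le 1$ we get $Z^g_n=1$ $P_{x_0}$-a.s.\ for every $n$; since this forbids any particle of $\overline{\xi}_n$ to sit in $A$, the many-to-one lemma gives $P_{x_0}(X_n\in A)=0$ for all $n$. On the other hand, for any $z_0\in A$ the identity $g(z_0)=\E_{z_0}(Z^g_1)<1$ together with $Z^g_1\le 1$ and $Z^g_1=1$ on $\{|\xi_1|=0\}$ forces $P_{z_0}(\exists u\in\overline{\xi}_1:u_1\in A)>0$, hence $P_{z_0}(X_1\in A)>0$. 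Since $g(x_0)=1>g(z_0)$ we have $x_0\ne z_0$, so (B3) applied to $(x_0,z_0,A)$ produces $n$ with $P_{x_0}(X_{n+1}\in A)>0$, a contradiction; thus $A=\emptyset$ and $g\equiv\mathbf{1}$.

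For (ii), which is the delicate case, assume $g(x_0)=\eta(x_0)$; by the identity above this says $Z^g_\infty\mathbbm{1}_{\Theta}=0$ $P_{x_0}$-a.s. Write $\rho(y):=g(y)-\eta(y)=\E_y(Z^g_\infty\mathbbm{1}_{\Theta})\ge 0$ and $A:=\{y:\rho(y)>0\}$, so $x_0\notin A$, and suppose for contradiction $A\ne\emptyset$. First, for $z_0\in A$ I would show $P_{z_0}(X_1\in A)>0$: since $\rho(z_0)>0$ we have $Z^g_\infty>0$ and $\Theta$ on a set of positive $P_{z_0}$-measure; decomposing at time $1$, on that set $|\xi_1|\ge 1$ and some $u\in\overline{\xi}_1$ satisfies $\Theta_u$ and $Z^{g,(u)}_\infty\ge Z^g_\infty>0$; but $\rho(u_1)=0$ would force $Z^{g,(u)}_\infty\mathbbm{1}_{\Theta_u}=0$ a.s., so necessarily $u_1\in A$, and many-to-one concludes. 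Now fix $z_0\in A$; as $x_0\ne z_0$, (B3) applied to $(x_0,z_0,A)$ gives $n$ with $P_{x_0}(X_{n+1}\in A)>0$. To turn this into a contradiction I would isolate a single particle: since the root's branching clock is independent of its motion, the event $E=\{\text{no branching before time }n+1\}\cap\{X_{n+1}\in A\}$ has $P_{x_0}$-probability $e^{-r(n+1)}P_{x_0}(X_{n+1}\in A)>0$, and on $E$ there is exactly one particle at time $n+1$, sitting at $z=X_{n+1}\in A$, whose subtree carries all of $Z^g_\infty$ and of $\Theta$ after time $n+1$. Conditioning on $\F_{n+1}$ then gives $\E_{x_0}(Z^g_\infty\mathbbm{1}_{\Theta})\ge\E_{x_0}(\mathbbm{1}_E\,\rho(X_{n+1}))>0$, contradicting $\E_{x_0}(Z^g_\infty\mathbbm{1}_{\Theta})=\rho(x_0)=0$. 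Hence $A=\emptyset$ and $g\equiv\eta$.

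The only genuine obstacle is the point just circumvented: in (ii), unlike in (iii), having a single particle land in $A$ does not by itself force $Z^g_\infty>0$ on the survival event, since other particles present at the same time may send the product to zero. This ``masking'' is exactly why one cannot run the clean ``$P_{x_0}(X_n\in A)=0$ for all $n$'' argument of (iii) here, and why the ``no branching up to time $n+1$'' isolation step, reducing to a genuinely single-particle configuration, is needed.
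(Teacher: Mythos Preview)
Your proof is correct and rests on the same core device as the paper's: to compare $g$ with $\eta$ (resp.\ $\mathbf{1}$) at a point $x_0$, one restricts to evolutions with no branching up to a given time so that a \emph{single} particle sits at a point of the set $A=\{g>\eta\}$ (resp.\ $\{g<1\}$), and then uses the irreducibility assumption (B3) to propagate.

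The main differences are in presentation rather than substance. For (i), the paper simply uses that $G$ is monotone and $\eta=\lim_n G^{(n)}(\mathbf{0})$, which is quicker than your martingale-limit representation $g(x)=\eta(x)+\E_x(Z^g_\infty\mathbbm{1}_\Theta)$, though yours yields the useful identity you exploit later. For (ii), the paper works at finite times: it shows directly that $P_x(X_n\in A)>0$ for some $n$ forces $G^{(n)}(\eta)(x)<G^{(n)}(g)(x)$ via the no-branching event, takes the contrapositive, applies (B3) to get $P_{x'}(X_1\in A)=0$ for all $x'$, and then concludes from the fixed-point equation at time $1$. This avoids your separate verification that $P_{z_0}(X_1\in A)>0$ for $z_0\in A$ (which you obtain through the subtree decomposition of $Z^g_\infty$). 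Your remark about ``masking'' correctly identifies why (ii) needs the isolation step while (iii) does not; the paper uses the same isolation trick but does not comment on this asymmetry.
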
 

\begin{proof} We show first that if $g$ is a fixed point of $G$ then $\eta \leq g \leq \mathbf{1}$. Indeed, the $g \leq \mathbf{1}$ inequality is immediate whereas the $\eta \leq g$ inequality follows from the fact that $G$ is an increasing operator, i.e. $G(f_1) \leq G(f_2)$ if $f_1 \leq f_2$, together with the fact that $\eta = \lim_{n \rightarrow +\infty} G^{(n)}(\mathbf{0})$, where $G^{(n)}$ denotes the $n$-th composition of $G$ with itself and $\mathbf{0}$ is the function constantly equal to $0$.

Now, let us prove (ii). First, we observe that it is easy to check by induction that for any $n \in \N$ 
$$
G^{(n)}(g)(x)= \E_x \left( \prod_{u \in \overline{\xi}_n} g(u_n) \right).
$$ In particular, if $x \in J$ satisfies $P_x( X_n \in \{ y \in J : \eta(y) < g(y) \}) > 0 ) > 0$ for some $n \in \N$ then, by considering only evolutions of $\xi^{(x)}$ in which there is no branching until time $n$, it is clear that 
$$
P_x\left( \prod_{u \in \overline{\xi}_n} \eta(u_n) < \prod_{u \in \overline{\xi}_n} g(u_n)\right) > 0
$$ so that 
$$
\eta(x)=G^{(n)}(\eta)(x)=\E_x \left( \prod_{u \in \overline{\xi}_n} \eta(u_n)\right) < 
\E_x \left( \prod_{u \in \overline{\xi}_n} g(u_n)\right)=G^{(n)}(h)(x)=g(x).
$$ Therefore, if $\eta(x)=g(x)$ then we must have $P_x( X_n \in \{ y \in J : \eta(y) < g(y) \}) = 0$ for every $n \in \N$. By irreducibility (assumption (B3)) we then obtain that $P_{x'}( X_{1} \in \{ y \in J : \eta(y) < g(y) \}) = 0$ and, as a consequence, that $P_{x'}( \xi_{1}( \{ y \in J: \eta(y) < g(y) \}) > 0 ) = 0$ holds for every $x' \in J$ since the particle positions $(u_1)_{u \in \overline{\xi}^{(x')}_1}$ are all distributed as $X^{(x')}_1$. Since $\eta$ and $h$ are fixed points of $G$, this implies that $g(x') \leq \eta(x')$ for every $x' \in J$ which, together with (i) shown above, allows us to conclude that $\eta \equiv g$. The proof of (iii) is analogous.
\end{proof}

\subsection{Part II} The following step is to show that, under (B1-B2), one has a suitable lower bound on the growth of our dynamics. To this end, for each $n \in \N$ we define the set 
\begin{equation}
\label{eq:defjotatilde}
\tilde{J}_n:=\{ x \in J : \Phi_x \leq n\}
\end{equation} and then write $\hat{J}_n:=J_n \cap \tilde{J}_n$. Notice that the sequence $(\hat{J}_n)_{n \in \N}$ is increasing and, furthermore, that $\cup_{n \in \N} \hat{J}_n = J$ by (B1). Now, the precise meaning of lower bound on the growth of our dynamics is formulated in the following definition. 

\begin{definition} \label{def:bstar} We say that the \textit{lower bound condition} holds, and denote it in the \mbox{sequel by (LB),} if for any $n \in \N$ and $B \in \B_J$ with $\nu(B) > 0$ there exists a time $T_{n,B} $ and a random variable $L_{n,B}$ satisfying $\E(L_{n,B}) > 1$ such that for all $x \in \hat{J}_n$ and every $t > T_{n,B}$ one has
	$$
	L_{n,B} \preceq \xi^{(x)}_t(B)
	$$ where $\preceq$ denotes stochastic domination, i.e. for any bounded measurable and increasing $f: \R \to \R$ one has that
	$$
	\E(f(L_{n,B})) \leq \inf_{x \in \hat{J}_n} \E_x( f(\xi_t(B))).
	$$ 
\end{definition}

\begin{remark}\label{rem:crec} Note that, by (B2) and Lemma \ref{lema:mt1} below, for any \mbox{$B \in \B_J$ with $\nu(B) > 0$ and $x \in J_n$} we have that 
	\begin{align}
	\E_x(\xi_t(B)) &\geq \E_x(\xi_t(B^*)) = h(x) p(t) e^{(r(m_1-1)-\lambda)t}(\nu(B^*)+s_{B^*}(x,t)) \nonumber\\
	& \geq \frac{1}{n}p(t) e^{(r(m_1-1)-\lambda)t}\left(\nu(B^*)+ \inf_{y \in J_n} s_{B^*}(y,t)\right) \label{eq:assump2B1}
	\end{align} so that, by Lemma \ref{lema:p} and (A2), for all $t$ large enough depending on $B$ and $n$ we have that
	\begin{eqnarray}
	\label{eq:assump2B}
	\inf_{x \in \hat{J}_n} \E_x(\xi_t(B)) > 1.
	\end{eqnarray} Thus, condition (LB) is simply a stronger form of \eqref{eq:assump2B}, one in which we ask the entire distributions of the random variables $(\xi_t^{(x)}(B))_{x \in \hat{J}_n}$ to be uniformly supercritical rather than just their means. 
\end{remark}

The lower bound (LB) will be the main tool in the proof of Lemma \ref{lemma:cru} in Part III below, which is crucial for proving Theorem \ref{theo:main3}. Our next result states that (LB) holds under (B2).  

\begin{proposition}\label{prop:b1} Assumption (B2) implies condition (LB). 
\end{proposition}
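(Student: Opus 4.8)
The plan is to upgrade the uniform lower bound on the \emph{mean} from Remark~\ref{rem:crec} to a uniform lower bound on the whole \emph{distribution} of $\xi_t^{(x)}(B)$ over $x\in\hat J_n$, by controlling its second moment and then running a Paley--Zygmund (truncated second moment) argument. First, using (B2), pick $B^*\in\mathcal{C}_X$ with $B^*\subseteq B$ and $\nu(B^*)>0$; since $\xi_t^{(x)}(B)\ge\xi_t^{(x)}(B^*)$ pathwise, it suffices to produce a random variable dominated by $\xi_t^{(x)}(B^*)$ uniformly in $x\in\hat J_n$ and $t$ large, so from now on we work with $B^*$ in place of $B$.

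The crux is to show that there exist constants $C_n,T_n$ (depending on $n$ and $B^*$, not on $x$ or $t$) with
\[
\E_x\big((\xi_t(B^*))^2\big)\le C_n\,\E_x^2(\xi_t(B^*))\qquad\text{for all }x\in\hat J_n,\ t>T_n,
\]
that is, $\E_x(W_t^2(B^*,B^*))\le C_n$ uniformly over $\hat J_n$. This follows by revisiting the proof of Theorem~\ref{theo:main}: in the decomposition $\E_x(W_t^2(B^*,B^*))=[1]_t+[a]_t+[b]_t+[c]_t$ used there, the terms $[1]_t$ and $[a]_t$ tend to $0$ uniformly over $J_n$ (using $h(x)\ge 1/n$ on $J_n$, the uniform convergence of $s_{B^*}(\cdot,t)$ over $J_n$ from (A2)(iii), and the subexponential growth of $p$ from Lemma~\ref{lema:p}(i)), while $[b]_t$ and $[c]_t$ are bounded, for $t$ large, by an absolute constant times
\[
\sup_{x\in\tilde J_n}\int_0^\infty\E_x(M_s^2)\,re^{-r(m_1-1)s}\,ds=\sup_{x\in\tilde J_n}\frac{\Phi_x}{m_2-m_1}\le\frac{n}{m_2-m_1},
\]
where one invokes the global bound $s_{B^*}(\cdot,\cdot)\le\overline{s}_{B^*}$ from (A2)(iv) and the boundedness of $q$ from Lemma~\ref{lema:p}(ii). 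This is exactly where both halves of $\hat J_n=J_n\cap\tilde J_n$ enter: $J_n$ keeps $h$ and the spatial error term under control, while $\tilde J_n$ keeps the temporal tail under control via $\Phi_x\le n$.

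With the moment bounds in hand, fix $x\in\hat J_n$ and $t>T_n$, write $Z=\xi_t^{(x)}(B^*)$, $\mu=\E(Z)$. From $\mu\le\mu/2+\E(Z\,\mathbbm{1}_{\{Z>\mu/2\}})$ and Cauchy--Schwarz one gets $P(Z>\mu/2)\ge\mu^2/\big(4\,\E(Z^2)\big)\ge 1/(4C_n)$. On the other hand, by (A2) together with $h(x)\ge 1/n$ and the uniform convergence $s_{B^*}(\cdot,t)\to 0$ over $J_n$,
\[
\mu=h(x)\,p(t)\,e^{(r(m_1-1)-\lambda)t}\big(\nu(B^*)+s_{B^*}(x,t)\big)\ge\frac{\nu(B^*)}{2n}\,p(t)\,e^{(r(m_1-1)-\lambda)t}
\]
for every $x\in J_n$ once $t$ is large. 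Since $p(t)e^{(r(m_1-1)-\lambda)t}\to+\infty$ (because $p$ is subexponential and $r(m_1-1)-\lambda>0$ by (I2)), we have $\inf_{t\ge T}p(t)e^{(r(m_1-1)-\lambda)t}\to+\infty$ as $T\to\infty$; hence, enlarging $T_n$ if needed, we may set $a_n:=\tfrac{\nu(B^*)}{4n}\inf_{t\ge T_n}p(t)e^{(r(m_1-1)-\lambda)t}$, which then satisfies $a_n\le\mu/2$ for all $x\in J_n$, $t>T_n$, and which — by a final enlargement of $T_n$ — can be taken with $a_n/(4C_n)>1$. Then $P(\xi_t^{(x)}(B^*)\ge a_n)\ge P(Z>\mu/2)\ge 1/(4C_n)$ uniformly, so
\[
L_{n,B}:=a_n\,\mathbbm{1}_{\{U\le 1/(4C_n)\}},\qquad U\ \text{uniform on }[0,1]\ \text{independent},
\]
satisfies $L_{n,B}\preceq\xi_t^{(x)}(B^*)\le\xi_t^{(x)}(B)$ for all $x\in\hat J_n$ and all $t>T_{n,B}:=T_n$, with $\E(L_{n,B})=a_n/(4C_n)>1$. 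This is precisely condition (LB). The main obstacle is the second step — making the estimates in the proof of Theorem~\ref{theo:main} uniform over $x\in\hat J_n$ rather than pointwise, which requires tracing exactly which subset of $\hat J_n$ controls each of $[1]_t,[a]_t,[b]_t,[c]_t$; once this is granted, the rest is a routine Paley--Zygmund bound together with the already established divergence of $\E_x(\xi_t(B^*))$.
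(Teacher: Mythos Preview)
Your proof is correct and follows essentially the same approach as the paper: both arguments combine the uniform divergence of the mean from Remark~\ref{rem:crec} with a uniform second-moment bound extracted from the proof of Theorem~\ref{theo:main}, and then apply a Cauchy--Schwarz/Paley--Zygmund inequality to produce a two-point random variable $L_{n,B}$ dominated by $\xi_t^{(x)}(B)$ uniformly over $\hat J_n$. The only cosmetic differences are that the paper states the second-moment bound in the form $\E_x(\xi_T^2(B))/\E_x^2(\xi_T(B))\le C_n\Phi_x+1$ (which you absorb directly into a single constant via $\Phi_x\le n$ on $\tilde J_n$) and takes $L_{n,B}$ integer-valued, whereas you reduce to $B^*\in\mathcal{C}_X$ at the outset and allow $a_n$ to be real; neither affects the argument.
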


\begin{proof}
	Let us fix $n \in \N$ and notice that, by the Cauchy-Schwarz inequality, we have for any $x \in J_n$, $B \in \B_{J}$ and $K,T \in \N$ that
	\begin{equation} \label{eq:holder}
	\E_{x}^2(\xi_T(B)\mathbbm{1}_{\xi_T(B)\geq K}) \leq \E_x(\xi^2_T(B))P_x(\xi_T(B)\geq K).
	\end{equation}On the other hand, if $\nu(B)>0$ then it follows from (B2), \eqref{eq:assump2B1} and Assumptions \ref{assumpG} that 
	$$
	\lim_{T \rightarrow +\infty} \left[ \inf_{y \in J_n} \E_y(\xi_T(B))\right] = +\infty.
	$$ Therefore, by \eqref{eq:holder} we conclude that if $T$ is sufficiently large (depending only on $K$, $n$ and $B$) then for all $x \in J_n$ we have
	$$
	P_x(\xi_T(B)\geq K) \geq \frac{[\E_x(\xi_T(B))-K]^2}{\E_x(\xi^2_T(B))} \geq \frac{1}{2} \cdot \frac{\E_x^2(\xi_T(B))}{\E_x(\xi_T^2(B))}.
	$$ Now, a careful inspection of the proof of Theorem \ref{theo:main} shows that there exists a constant $C_n > 0$ and a time $T_n > 0$ such that 
	$$
	\frac{\E_x(\xi^2_T(B))}{\E_x^2(\xi_T(B))} \leq C_n \Phi_x + 1
	$$ for all $x \in J_n$ and $T > T_n$. We stress that $C_n$ and $T_n$ do \textit{not} depend on $x \in J_n$, only on $n$ and $B$. Therefore, since $\sup_{y \in \tilde{J}_n} \Phi_y <+\infty$, we may take $K \in \N$ sufficiently large and $T \in \N$ accordingly so that 
	$$
	\inf_{x \in \hat{J}_n} P_x(\xi_T(B)\geq K ) \geq \frac{1}{K-1}.
	$$ It follows that $L_{n,B} \preceq \xi^{(x)}_T(B)$ for any such $T$ and all $x \in \hat{J}_n$, where $L_{n,B}$ has distribution given by 
	$$
	P(L_{n,B}=K) = \frac{1}{K-1} = 1 - P(L_{n,B}=0).
	$$ Since in this case $\E(L_{n,B})=\frac{K}{K-1}>1$, this concludes the proof.
\end{proof}

\subsection{Part III}

We continue by using Proposition \ref{prop:b1} to show that strong supercriticality can be reformulated \mbox{in terms of} certain fixed points of $G$. More precisely, we have the following result.

\begin{proposition} \label{prop:ss1} If Assumptions \ref{assumpG2} are satisfied then $\xi^{(x)}$ is strongly supercritical if and only if the following two conditions hold:
	\begin{enumerate}
		\item [i.] $\xi^{(x)}$ is supercritical, i.e. $P_x(\Theta) > 0$.
		\item [ii'.] There exists $n \in \N$ such that 
	$$
	P_x(\Theta) = P_x \left( \limsup_{k \rightarrow +\infty} \xi_k(\tilde{J}_n) > 0\right)
	$$ where $\tilde{J}_n$ is given by \eqref{eq:defjotatilde}.
	\end{enumerate}
\end{proposition}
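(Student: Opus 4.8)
The plan is to fix $x\in J$ with $P_x(\Theta)>0$; since clause (i) occurs on both sides, the content is the equivalence, under (i), between strong supercriticality of $\xi^{(x)}$ (that is, $P_x(\Gamma)=\eta(x)$) and clause (ii'). I would phrase everything through the auxiliary functions $g_n(y):=P_y(\limsup_{k\to+\infty}\xi_k(\tilde{J}_n)=0)$, $n\in\N$. Each $g_n$ is a fixed point of $G$: the event $\{\limsup_k\xi_k(\tilde{J}_n)=0\}$ means that eventually no particle alive at an integer time lies in $\tilde{J}_n$, and by the branching property this happens for the whole tree iff it happens for every sub-tree issued by the particles of $\overline{\xi}_1$, so $\mathbbm{1}^{(y)}_{\{\limsup_k\xi_k(\tilde{J}_n)=0\}}=\prod_{u\in\overline{\xi}_1^{(y)}}\mathbbm{1}^{(u_1)}_{\{\limsup_k\xi_k(\tilde{J}_n)=0\}}$ and hence $g_n=G(g_n)$. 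This is the ``reformulation in terms of fixed points of $G$'': since $\{\limsup_k\xi_k(\tilde{J}_n)>0\}\subseteq\Theta$ always, clause (ii') says exactly that $g_n(x)=\eta(x)$ for some $n$, which by Proposition \ref{prop:fpG} (using (B3)) is equivalent to $g_n\equiv\eta$ for some $n$.

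The implication (ii')$\,\Rightarrow\,$strong supercriticality is immediate: if $g_n\equiv\eta$ for some $n$, then $P_x$-a.s.\ on $\Theta$ one has $\xi_k(\tilde{J}_n)\geq1$ for infinitely many integers $k$, hence $\min_{u\in\overline{\xi}_t^{(x)}}\Phi_{u_t}\leq n$ along a time sequence tending to $+\infty$, so this minimum does not converge to $+\infty$ on $\Theta$; since on $\Theta^c$ it is eventually $+\infty$ (empty minimum), we get $\Gamma^{(x)}=(\Theta^{(x)})^c$ up to a $P_x$-null set, i.e.\ $P_x(\Gamma)=\eta(x)$, which with (i) is strong supercriticality.

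For the converse I would argue thus. From $P_x(\Gamma)=\eta(x)$ together with $\Theta^c\subseteq\Gamma$ (extinction forces an eventually-empty particle set, hence $\min_u\Phi_{u_t}=+\infty$) one gets $\Gamma^{(x)}=(\Theta^{(x)})^c$ $P_x$-a.s., so $P_x$-a.s.\ on $\Theta$ the quantity $\liminf_{t\to+\infty}\min_{u\in\overline{\xi}_t}\Phi_{u_t}$ is finite; equivalently, $P_x$-a.s.\ on $\Theta$ there is some (random) $m$ with $\limsup_{t\to+\infty}\xi_t(\tilde{J}_m)>0$. The key step is then the divergence property — proved via the lower bound (LB) of Proposition \ref{prop:b1} — that on the event $\{\limsup_{t\to+\infty}\xi_t(\tilde{J}_m)>0\}$ one in fact has $\xi_t(B)\to+\infty$ for every $B\in\B_J$ with $\nu(B)>0$. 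Granting this, fix $m_0$ large enough that $\nu(\hat{J}_{m_0})>0$ (possible since $\hat{J}_m\uparrow J$ by (B1) and $\nu$ is nontrivial on $\mathcal{C}_X$) and, via (B2), a set $B^*\in\mathcal{C}_X$ with $B^*\subseteq\hat{J}_{m_0}$ and $\nu(B^*)>0$; then $P_x$-a.s.\ on $\Theta$ we have $\xi_k(\tilde{J}_{m_0})\geq\xi_k(B^*)\to+\infty$, so $\limsup_k\xi_k(\tilde{J}_{m_0})>0$, i.e.\ $g_{m_0}(x)=\eta(x)$, which is (ii') with $n=m_0$.

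The main obstacle is exactly this divergence property: one must upgrade a realisation-dependent, a priori unbounded threshold $m$ and continuous-time visits to $\tilde{J}_m$ into control at one fixed level and at integer times. This is where Proposition \ref{prop:b1} does the work — each time the process places a particle in $\hat{J}_n$, (LB) lets one compare its descendants, counted at the integer epochs $T_{n,B},2T_{n,B},\ldots$ through their particles in a fixed subset of $B$, with a supercritical Galton--Watson process of mean $\E(L_{n,B})>1$; this comparison process survives with probability bounded below uniformly over $\hat{J}_n$, hence yields indefinitely many such particles, and a conditional Borel--Cantelli argument over successive visits — together with the irreducibility (B3), used to spread the resulting mass into an arbitrary set of positive $\nu$-measure — turns ``$\tilde{J}_m$ visited at arbitrarily large times'' into ``$\xi_t(B)$ diverges''. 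The remaining ingredients (the $g_n$-reformulation, the elementary direction, and the bookkeeping of the events $\Gamma$, $\Theta$ and $\{\limsup_k\xi_k(\tilde{J}_n)>0\}$) are routine.
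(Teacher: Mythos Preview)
Your proof is correct and follows essentially the same route as the paper: both directions rest on the fixed-point reformulation of (ii') (your $g_n$, the paper's Remark~\ref{rem:fpG}) and on the key divergence Lemma~\ref{lemma:cru}, proved via the lower bound (LB) of Proposition~\ref{prop:b1}. The only organisational difference is in the converse: the paper iterates the implication $\lim_k\xi_k(\tilde J_n)=0\Rightarrow\lim_t\xi_t(\tilde J_{n+1})=0$ level by level, whereas you apply Lemma~\ref{lemma:cru} once with $B=B^*\subseteq\tilde J_{m_0}$ and take a countable union over the random threshold $m$ --- arguably a touch more direct.

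Two small imprecisions are worth flagging. First, Lemma~\ref{lemma:cru} only yields $\xi_{kT}(B)\to+\infty$ along multiples of a fixed $T=T(m,B)$, not $\xi_t(B)\to+\infty$ along all $t$ as you state; this does not matter, since your argument only needs $\limsup_k\xi_k(\tilde J_{m_0})>0$. Second, irreducibility (B3) plays no role in the divergence step --- the proof of Lemma~\ref{lemma:cru} uses only (LB), hence (B1)--(B2); (B3) enters only through Proposition~\ref{prop:fpG}, exactly where you first invoke it.
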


\begin{remark}\label{rem:fpG}Observe that for $B \in \B$ the function $g_{B}$ defined as
	$$
	g_{B}(x):= P_x\left( \limsup_{n \rightarrow +\infty} \xi_{n}(B) = 0\right)
	$$ is a fixed point of $G$. Indeed, the proof of this statement is analogous to that of Proposition \ref{prop:fp}. Thus, Proposition \ref{prop:ss1} states that $\xi^{(x_0)}$ is strongly supercritical if and only if for some $n \in \N$
	$$
	g_{\tilde{J}_n}(x_0)=\eta(x_0)<1.
	$$ But then by Proposition \ref{prop:fpG} we conclude that the same statement must hold for all $x \in J$, so that $\xi^{(x)}$ is strongly supercritical for some $x \in J$ if and only if it is strongly supercritical for all $x \in J$.
\end{remark}

We now prove Proposition \ref{prop:ss1}. Let us notice that it suffices to show that (ii) in Definition \ref{def:ss} is equivalent to (ii') in the statement above. To see the (ii')$\Longrightarrow$(ii) implication, notice the inclusions
\begin{equation} \label{eq:traninc}
\left(\Theta^{(x)}\right)^c \subseteq \Gamma^{(x)} = \bigcap_{n \in \N} \left\{ \lim_{t \rightarrow +\infty} \xi^{(x)}_{t}(\tilde{J}_n) = 0 \right\} \subseteq \bigcap_{n \in \N} \left\{ \lim_{k \rightarrow +\infty} \xi^{(x)}_{k}(\tilde{J}_n) = 0 \right\}.
\end{equation} Now, for each $n \in \N$ let us write $A_{n}^{(x)}:=\left\{ \lim_{k \rightarrow +\infty} \xi^{(x)}_{k}(\tilde{J}_n) = 0 \right\}$. Notice that, since the sequence $(\tilde{J}_n)_{n \in \N}$ is increasing, we have that $(A^{(x)}_{n})_{n \in \N}$ is decreasing and therefore that
\begin{equation} \label{eq:traninc2}
P_x\left(\bigcap_{n \in \N} \left\{ \lim_{k \rightarrow +\infty} \xi_{k}(\tilde{J}_n) = 0 \right\}\right) = \lim_{n \rightarrow +\infty} P_x(A_{n}).
\end{equation} Therefore, if (ii') holds then it follows that $P_x(\Theta^c) = P_x( A_{n} )$ for some $n$ and, by \eqref{eq:traninc} and \eqref{eq:traninc2}, we conclude that (ii) holds. On the other hand, if (ii) holds then by \eqref{eq:traninc} we have 
\begin{equation} \label{eq:tran3}
P_x(\Theta^c)= \lim_{n \rightarrow +\infty} P_x \left( \lim_{t \rightarrow +\infty} \xi_{t}(\tilde{J}_n)= 0\right).
\end{equation} Thus, if we show that for all $n \in \N$ sufficiently large so that $x \in \hat{J}_n$ and $\nu(\tilde{J}_n) > 0$ we have  
\begin{equation} \label{eq:tranimpl}
\lim_{k \rightarrow +\infty} \xi^{(x)}_{k}(\tilde{J}_n) = 0 \Longrightarrow \lim_{t \rightarrow +\infty} \xi^{(x)}_{t}(\tilde{J}_{n+1}) = 0
\end{equation} then, from \eqref{eq:tran3} and the inclusion $\left(\Theta^{(x)}\right)^c \subseteq \{ \lim_{k \rightarrow +\infty} \xi^{(x)}_{k}(\tilde{J}_n) = 0\}$, by iterating \eqref{eq:tranimpl} we conclude that for $n$ sufficiently large 
$$
P_x(\Theta^c)\leq P_x\left( \lim_{k \rightarrow +\infty} \xi_{k}(\tilde{J}_n) = 0\right) \leq \lim_{m \rightarrow +\infty} P_x \left( \lim_{t \rightarrow +\infty} \xi_{t}(\tilde{J}_m)= 0\right) = P_x(\Theta^c),
$$ which immediately gives (ii'). Now, \eqref{eq:tranimpl} follows at once from the next lemma.

\begin{lemma} \label{lemma:cru} For any $m \in \N$ and $B \in \B_J$ such that $\hat{J}_m \neq \emptyset$ and $\nu(B)>0$ there exists $T \in \N$ satisfying that for any $x \in \hat{J}_m$ one has
	$$
	\limsup_{t \rightarrow +\infty} \xi^{(x)}_t(\tilde{J}_m) > 0 \Longrightarrow \lim_{k \rightarrow +\infty} \xi^{(x)}_{kT}(B) = +\infty.
	$$
\end{lemma}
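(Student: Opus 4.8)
The plan is to realise, on the event $E:=\{\limsup_{t\to\infty}\xi^{(x)}_t(\tilde J_m)>0\}$, infinitely many mutually independent fresh copies of the branching dynamics started inside $\tilde J_m$, to show that each such copy eventually floods $B$ with a probability bounded away from zero, and to conclude by a Borel--Cantelli argument. First I would reduce to the case $B\in\mathcal C_X$ with $B\subseteq\tilde J_m$: by (B2) there is $B^*\in\mathcal C_X$ with $B^*\subseteq B\cap\tilde J_m$ and $\nu(B^*)>0$ --- in the situation where this lemma is invoked one has $B=\tilde J_n$, $m=n+1$ and $\nu(\tilde J_n)>0$, so $B\cap\tilde J_m=\tilde J_n$ has positive $\nu$-mass --- and since $\xi_t(B)\ge\xi_t(B^*)$ it suffices to prove the statement for $B^*$.

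The key point is a uniform productivity estimate over $\tilde J_m$. By Theorem \ref{theo:main2} every $y\in\tilde J_m$ satisfies $\E_y(D^{(y)}_\infty)=1$ and $\E_y\big((D^{(y)}_\infty)^2\big)=\Phi_y\le m$, so the Paley--Zygmund inequality yields
$$
P_y\Bigl(D^{(y)}_\infty>\tfrac12\Bigr)\;\ge\;\frac{(1/2)^2}{\E_y\big((D^{(y)}_\infty)^2\big)}\;\ge\;\frac{1}{4m}\;=:\;\delta\;>\;0,
$$
uniformly over $y\in\tilde J_m$. On $\{D^{(y)}_\infty>0\}$, Theorem \ref{theo:main}(ii) gives $\xi^{(y)}_t(B^*)/\E_y(\xi_t(B^*))\to D^{(y)}_\infty>0$ in $L^2$, while by Lemma \ref{lema:mt1}, (A2), (I2) and Lemma \ref{lema:p} one has $\E_y(\xi_t(B^*))=h(y)p(t)e^{(r(m_1-1)-\lambda)t}\big(\nu(B^*)+s_{B^*}(y,t)\big)\to+\infty$; passing to a ($y$-dependent) deterministic subsequence of $T\N$ along which the $L^2$-convergence is almost sure, we obtain $\xi^{(y)}_{kT}(B^*)\to+\infty$ a.s.\ on $\{D^{(y)}_\infty>0\}$. (Alternatively one may use the lower bound (LB) of Proposition \ref{prop:b1} to dominate $\xi^{(y)}$ from below by a supercritical Galton--Watson process, iterated inside $B^*\subseteq\tilde J_m$.)

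Finally, on $E$ the process survives and a particle occupies $\tilde J_m$ at a sequence of times tending to infinity; using the branching and Markov properties I would build inductively stopping times $\sigma_1<\sigma_2<\cdots\to\infty$ in $T\N$ and particles $v_i\in\tilde J_m$ alive at time $\sigma_i$, with pairwise disjoint descendant subtrees, so that conditionally on $\mathcal F_{\sigma_i}$ the subtree of $v_i$ is an independent copy of $\xi^{(v_i)}$. By the estimate above each subtree independently ``succeeds'' --- meaning $\xi^{(v_i)}_{kT}(B^*)\to+\infty$ --- with probability at least $\delta$, so by the second Borel--Cantelli lemma infinitely many succeed $P_x$-almost surely on $E$; since the subtrees are disjoint, $\xi^{(x)}_{kT}(B^*)\ge\sum_{i\,:\,\sigma_i\le kT}\xi^{(v_i)}_{kT-\sigma_i}(B^*)$, and combining the divergence of the successful summands with the independence of the subtrees (so that at a large time $kT$ not all currently active successful copies can be simultaneously in a ``low'' phase) gives $\xi^{(x)}_{kT}(B^*)\to+\infty$ on $E$. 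I expect the main obstacle to be this renewal extraction: turning the qualitative recurrence hypothesis into a genuine sequence of fresh independent copies, choosing the $v_i$ measurably at lattice times and ensuring disjointness of their subtrees; a secondary subtlety is that, away from the $\lambda$-positive Lyapunov setting of Theorem \ref{theo:main5}, Theorem \ref{theo:main} provides only $L^2$-convergence, so the divergence of each successful subtree holds along a subsequence and the Borel--Cantelli bookkeeping must be arranged so that the sum over successful subtrees still diverges along the full lattice $T\N$.
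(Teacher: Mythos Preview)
Your overall architecture --- repeatedly selecting a particle in $\tilde J_m$ on the recurrence event and arguing that with uniformly positive probability its subtree eventually floods $B$ --- is the paper's. The difference lies in the mechanism used for the flooding step.

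The paper adopts precisely what you mention parenthetically as the ``alternative''. Using (LB) from Proposition~\ref{prop:b1}, one finds $T\in\N$ and a law $L_{m,B}$ with $\E(L_{m,B})>1$ such that $L_{m,B}\preceq\xi_T^{(y)}(B)$ for every $y\in\hat J_m$. Setting $V_j^{(1)}:=\xi_{jT}^{(x)}(B)$ and using $V_{j+1}^{(1)}\ge\sum_{u\in\overline\xi_{jT}^{(x)}(B)}\xi_T^{(u)}(B)$, one couples $V^{(1)}$ above a supercritical Galton--Watson chain $Z^{(1)}$ with offspring law $L_{m,B}$; survival of $Z^{(1)}$ then forces $V_j^{(1)}\to+\infty$ along the \emph{entire} lattice $\N$, with no subsequence issue whatsoever. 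If $Z^{(1)}$ dies, one waits for a fresh particle in $\tilde J_m$ and restarts to obtain $Z^{(2)}$, and so on; the construction is sequential rather than the parallel disjoint-subtree extraction you describe, so the measurable selection of the $v_i$'s and the disjointness bookkeeping never arise. Your reduction to $B^*\subseteq\hat J_m$ is needed here (and is implicit in the paper's proof), so that particles currently in $B^*$ are legitimate starting points for the (LB) domination at the next step.

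Your primary route --- Paley--Zygmund for a uniform lower bound $P_y(D_\infty^{(y)}>\tfrac12)\ge\delta$ on $\tilde J_m$, followed by Theorem~\ref{theo:main} --- runs into exactly the obstacle you flag at the end, and it is a genuine gap rather than a technicality. Theorem~\ref{theo:main} yields only $L^2$-convergence of $W_t^{(y)}(B^*,B^*)$, so a.s.\ divergence of $\xi_{kT}^{(y)}(B^*)$ is guaranteed only along a $y$-dependent subsequence. Your closing heuristic (``not all currently active successful copies can be simultaneously in a low phase'') is not justified: nothing prevents the low phases of the independent subtrees from aligning infinitely often along $T\N$. I do not see how to close this without either upgrading to almost sure convergence --- which is Theorem~\ref{theo:main5} and requires the extra $\lambda$-positive Lyapunov hypotheses --- or reverting to the Galton--Watson coupling, which is the paper's argument.
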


\begin{proof} The idea is to couple the sequence $(\xi^{(x)}_{kT}(B))_{k \in \N}$ together with an i.i.d. sequence $(Z^{(n)})_{n \in \N}$ of supercritical single-type Galton-Watson branching processes such that if at least one $Z^{(n)}$ survives on the event $\{\limsup_{t \rightarrow +\infty} \xi^{(x)}_t(\tilde{J}_m) > 0\}$ then $\xi^{(x)}_{kT}(B)$ tends to infinity as $k \rightarrow +\infty$. 
	
We proceed as follows. First we notice that, since condition (LB) holds by Proposition \ref{prop:b1}, there exists a random variable $L_{m,B}$ with $\E(L_{m,B})>1$ and a time $T \in \N$ such that for all $y \in \hat{J}_m$ and $t \geq T$ we have
\begin{equation} \label{eq:domi}
L_{m,B} \preceq \xi^{(y)}_t(B).
\end{equation} Next, given a fixed $x \in \hat{J}_m$, we define the process $V^{(1)}:=(V^{(1)}_j)_{j \in \N}$ by the formula
$$
V^{(1)}_j:= \xi^{(x)}_{jT}(B)
$$ and observe that for each $j \in \N$ we have
$$
V^{(1)}_{j+1} \geq \sum_{u \in \overline{\xi}^{(x)}_{jT}(B)} \xi^{(u)}_T(B),
$$ where $\overline{\xi}^{(x)}_{jT}(B)$ denotes the subcollection of particles of $\overline{\xi}^{(x)}_{jT}$ which are located inside the subset $B$ and, for $u \in \overline{\xi}^{(x)}_{t}$, $\xi^{(u)}$ is the sub-dynamics of $\xi^{(x)}$ associated to the particle $u$ starting at time $t$. 
Since for each $u \in \overline{\xi}^{(x)}_{jT}(B)$ we have that $L_{B,m} \preceq \xi^{(u)}_T(B)$ by \eqref{eq:domi}, it follows that (by enlarging the current probability space if necessary) one can couple $V^{(1)}$ with a Galton-Watson process $Z^{(1)}:=(Z^{(1)}_j)_{j \in \N}$ with offspring distribution given by $L_{m,B}$, in such a way that $V^{(1)}_j \geq Z^{(1)}_j$ holds for all $j \in \N$. Therefore, if $Z^{(1)}$ survives then $Z^{(1)}_j$ must tend to infinity as $j \rightarrow +\infty$ and, consequently, so must $\xi^{(x)}_{jT}(B)$. In this case, we decouple $\xi^{(x)}$ from the remaining $Z^{(n)}$ (for $n \geq 2$) by taking these to be independent from $\xi^{(x)}$. If $Z^{(1)}$ dies out, however, we proceed as follows:
\begin{enumerate}
	\item [i.] Define $\tau^{(1)}:=\inf \{ j \in \N : Z^{(1)}_j = 0\}$. Notice that $Z^{(1)}$ dies out if and only if $\tau^{(1)}<+\infty$.
	\item [ii.] If $\xi^{(x)}_{sT}(\tilde{J}_m)=0$ for all $s \geq \tau^{(1)}$, then decouple the $(Z^{(n)})_{n \geq 2}$ from $\xi^{(x)}$ as before.
	\item [iii.] If $\xi^{(x)}_{sT}(\tilde{J}_m)>0$ for some (random) $s \geq \tau^{(1)}$, then choose some $y \in \overline{\xi}^{(x)}_{sT}(\tilde{J}_m)$ at random and define the process $V^{(2)}=(V^{(2)}_j)_{j \in \N}$ according to the formula
	$$
	V^{(2)}_{j}:= \xi^{(y)}_{(\lceil s\rceil+j)T}(B),
	$$ where $\lceil s \rceil$ here denotes the smallest integer greater than or equal to $s$. Let us observe that, by construction, there exists a (random) $k \in \N$ such that $V^{(2)}_j \leq \xi^{(x)}_{(k+j)T}(B)$ for all $j \in \N$. By a similar argument than the one carried out for $V^{(1)}$, it is possible to couple $V^{(2)}$ with a Galton-Watson process $Z^{(2)}$ which is independent of $Z^{(1)}$ but has the same distribution, in such a way that $V^{(2)}_j \geq Z^{(2)}_j$ for all $j \in \N$. If $Z^{(2)}$ survives then, by the considerations above, $\xi^{(x)}_{jT}$ must tend to infinity as $j \rightarrow +\infty$. If not, then one can repeat this procedure to obtain a branching process $Z^{(3)}$ and so on.	
\end{enumerate} Since every $Z^{(n)}$ has the same \textit{positive} probability of survival, it follows that at least one of them will survive on the event $\{\limsup_{t \rightarrow +\infty} \xi^{(x)}_t(\tilde{J}_m) > 0\}$, and so the result now follows.
\end{proof}

\subsection{Part IV}

We now conclude by showing all implications in the statement of Theorem \ref{theo:main3}. 

First, let us observe that the condition $\Phi_x < +\infty$ implies that $\sigma(x)<1$. Indeed, if $\Phi_x < +\infty$ then by Theorem \ref{theo:main2} we have $\E_x(D_\infty)=1$ so that  $\sigma(x)=P_x(D_\infty=0) < 1$ necessarily holds. Thus, if (B1) holds then $\sigma \neq \mathbf{1}$ and therefore (i) must imply (ii). 

\mbox{That (ii) $\Longrightarrow$ (iii)} is obvious, so we move on to (iii) $\Longrightarrow$ (iv). Take $x \in J$ such that $\eta(x)=\sigma(x)$. Note that, by the argument given above, $\sigma(x)<1$ so that if $\eta(x)=\sigma(x)$ then $\xi^{(x)}$ is supercritical. It remains to verify (ii') of Proposition \ref{prop:ss1}. But (B2) together with Theorem \ref{theo:main} and \eqref{eq:assump2B1} imply that for any $B \in \B_J$ with $\nu(B)> 0$
$$
\limsup_{n \rightarrow +\infty} P_{x} ( \xi_n(B) = 0 | \Lambda )  \leq \limsup_{n \rightarrow +\infty} P_{x} ( \xi_n(B^*) = 0 | \Lambda ) = 0, 
$$ from which a straightforward calculation yields that  
$$
P_x \left( \left\{ \limsup_{n \rightarrow +\infty} \xi_n(B) > 0 \right\} \cap \Lambda \right) = P_x(\Lambda).
$$ Therefore, since we also have the inequalities
$$
P_x \left( \left\{ \limsup_{n \rightarrow +\infty} \xi_n(B) > 0 \right\} \cap \Lambda \right) \leq P_x\left( \limsup_{n \rightarrow +\infty} \xi_n(B) > 0\right) \leq P_x(\Theta),
$$ if $\eta(x)=\sigma(x)$ then we have $P_x(\Theta)=P_x(\Lambda)$ and so (ii') follows. This shows that (iii) $\Longrightarrow$ (iv). 

The implication (iv) $\Longrightarrow$ (v) is also obvious, so it remains to show (v) $\Longrightarrow$ (i). For this purpose, we note that, if $g \neq \mathbf{1}$ is a fixed point of $G$, Proposition \ref{prop:fpG} yields that $g(y) < 1$ for \textit{all} $y \in J$. Thus, by (B2) one can find $B \in \mathcal{C}_X$ with $\nu(B)>0$ and $\varepsilon > 0$ such that $\sup_{y \in B} g(y)<1-\varepsilon$. Now, since $g$ is a fixed point of $G$, we have
$$
g(x) = \lim_{n \rightarrow +\infty} G^{(n)}(g)(x) = \lim_{n \rightarrow +\infty}\E_{x} \left( \prod_{u \in \overline{\xi}_n} g(u_n) \right) \leq \liminf_{n \rightarrow +\infty} \E_{x}\left( (1-\varepsilon)^{\xi_n(B)}\right)
$$ where for the last inequality we have used the fact that $g \leq 1$. Moreover, let us observe that 
\begin{equation}
\label{eq:sscota}
\E_{x}\left( (1-\varepsilon)^{\xi_n(B)}\right) \leq P_{x}( N_n = 0) + P_{x}( \Theta^c \cap \{ N_n > 0\}) + 
\E_{x}\left( (1-\varepsilon)^{\xi_n(B)}\mathbbm{1}_\Theta\right),
\end{equation} where, since $\{N_n^{(x)} > 0\} \searrow \Theta^{(x)}$ as $n \rightarrow +\infty$, we have that 
$$
\lim_{n \rightarrow +\infty} P_{x}(N_n = 0) = \eta(x) \hspace{1cm}\text{ and }\hspace{1cm}\lim_{n \rightarrow +\infty} P_{x}(\Theta^c \cap \{N_n > 0\}) = P_{x} ( \Theta^c \cap \Theta) = 0.
$$ Hence, if we could show that 
\begin{equation}
\label{eq:sscota2}
\liminf_{n \rightarrow +\infty} \E_x\left( (1-\varepsilon)^{\xi_n(B)}\mathbbm{1}_{\Theta}\right) = 0,
\end{equation} then we would immediately obtain that $g(x) \leq \eta(x)$. Together with the obvious reverse inequality, this would yield $\eta(x)=g(x)$ and hence, by Proposition \ref{prop:fpG}, that $\eta \equiv g$. Since we have that $\eta \neq \mathbf{1}$ by the strong supercriticality of $\xi^{(x)}$, (v) $\Longrightarrow$ (i) would follow at once. Thus, let us show \eqref{eq:sscota2}.
Observe that \eqref{eq:sscota2} immediately follows if we can show that for any $K > 0$ 
\begin{equation} \label{eq:sscota3}
\liminf_{n \rightarrow +\infty} P_x ( \xi_n(B) \leq K | \Theta) = 0.
\end{equation} Since $\xi^{(x)}$ is strongly supercritical, by (ii') of Proposition \ref{prop:ss1} we have that \eqref{eq:sscota3} is then equivalent to 
\begin{equation} \label{eq:sscota4}
\liminf_{n \rightarrow +\infty} P_x \left( \xi_n(B) \leq K \Bigg| \limsup_{j \rightarrow +\infty} \xi_{j}(\tilde{J}_k) > 0\right) = 0
\end{equation} if $k \in \N$ is taken sufficiently large.
But \eqref{eq:sscota4} is now a straightforward consequence of Lemma \ref{lemma:cru}, so that \eqref{eq:sscota2} follows. 

\section{Proof of Proposition \ref{prop:lyapunov1}}\label{sec:rpos}

We conclude by proving Proposition \ref{prop:lyapunov1}, which is essentially a consequence of the following. 

\begin{proposition}\label{prop:Lyapunov} If $X$ is $\lambda$-positive and admits a Lyapunov functional $V$ as in Definition \ref{def:lyapunov}, 
	then $\mu(g) < +\infty$ for any $\B_J$-measurable $g:J \rightarrow \R$ such that $\|\frac{g}{1+V}\|_\infty < +\infty$ and, furthermore, there exist constants $C,\gamma > 0$ such that for all $x \in J$, $t \geq 0$ and any $g$ as above one has
	$$
	|\tilde{\E}_x(g(X_t)) - \mu(g)| \leq C (1+V(x)) e^{-\gamma t} \left\|\frac{g-\mu( g)}{1+V}\right\|_\infty.
	$$
\end{proposition}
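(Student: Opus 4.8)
The plan is to reduce the statement to Harris' ergodic theorem applied to the discrete-time chain sampled at the time $t_0>0$ furnished by (V1), and then to upgrade the resulting discrete-time estimate to a continuous-time one by means of the Markov property and the drift bound (V2).

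Write $\gamma_0$ for the constant in (V2) and set $P:=\tilde S_{t_0}$, a Markov kernel on the Polish space $J$. The first step is to observe that $P$ satisfies the hypotheses of Harris' theorem (in, say, the Hairer--Mattingly formulation): (V2) at $t=t_0$ gives the geometric drift bound $PV\le \alpha V+K$ with $\alpha:=e^{-\gamma_0 t_0}\in(0,1)$, while (V1) provides, for \emph{every} $R>0$, a constant $\alpha_R\in(0,1)$ with $\|P(x,\cdot)-P(y,\cdot)\|_{\mathrm{TV}}\le 2(1-\alpha_R)$ whenever $V(x)+V(y)\le R$, so that choosing $R>2K/(1-\alpha)$ makes the sublevel set $\{V\le R/2\}$ a small set in the required sense. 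Harris' theorem then yields a unique $P$-invariant probability measure $\mu_0$ with $\mu_0(V)<+\infty$ and constants $\bar C,\bar\gamma>0$ such that, writing $\|\phi\|_\star:=\|\phi/(1+V)\|_\infty$ and using $\|\cdot\|_\star$ also for the dual weighted norm on zero-mass signed measures,
$$
|\tilde S_{nt_0}\phi(x)-\mu_0(\phi)|\;\le\; \bar C\, e^{-\bar\gamma n}\,(1+V(x))\,\|\phi-\mu_0(\phi)\|_\star
$$
for all $n\in\N$, $x\in J$ and measurable $\phi$ with $\|\phi\|_\star<+\infty$. Since the probability measure $\mu$ (given by $d\mu/d\nu=h$, $\nu(h)=1$) is invariant for the whole semigroup $\tilde S$, hence for $P$, uniqueness forces $\mu_0=\mu$; in particular $\mu(V)<+\infty$, whence $\mu(g)\le\|g/(1+V)\|_\infty(1+\mu(V))<+\infty$ for every $g$ as in the statement. (Alternatively $\mu(V)<+\infty$ is seen directly: the displayed bound gives $P^n(V\wedge N)(x)\to\mu(V\wedge N)$ because $\|\cdot\|_\star$ dominates total variation, while iterating (V2) gives $P^nV(x)\le\alpha^nV(x)+K/(1-\alpha)$, so $\mu(V\wedge N)\le K/(1-\alpha)$ for all $N$ and one lets $N\to\infty$.)

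It remains to pass from the times $nt_0$ to arbitrary $t\ge0$. For $t\ge t_0$ write $t=nt_0+s$ with $n=\lfloor t/t_0\rfloor\ge1$ and $s\in[0,t_0)$; by the semigroup property $\tilde S_tg=\tilde S_{nt_0}(\tilde S_sg)$ and $\mu(\tilde S_sg)=\mu(g)$ by invariance, so applying the discrete estimate with $\phi=\tilde S_sg$ gives $|\tilde S_tg(x)-\mu(g)|\le \bar C e^{-\bar\gamma n}(1+V(x))\|\tilde S_sg-\mu(g)\|_\star$. To control $\|\tilde S_sg-\mu(g)\|_\star$ uniformly in $s\in[0,t_0)$, note that for every $y$,
$$
|\tilde S_sg(y)-\mu(g)|=|\tilde S_s(g-\mu(g))(y)|\le\|g-\mu(g)\|_\star\,\tilde S_s(1+V)(y)=\|g-\mu(g)\|_\star\,(1+\tilde S_sV(y))\le(1+K)\|g-\mu(g)\|_\star\,(1+V(y)),
$$
using $\tilde S_sV(y)\le e^{-\gamma_0 s}V(y)+K\le V(y)+K$; hence $\|\tilde S_sg-\mu(g)\|_\star\le(1+K)\|g-\mu(g)\|_\star$. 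Since moreover $e^{-\bar\gamma n}\le e^{\bar\gamma}e^{-(\bar\gamma/t_0)t}$, putting $\gamma:=\bar\gamma/t_0$ and $C:=\bar C(1+K)e^{\bar\gamma}$ — enlarged if necessary to absorb the range $0\le t<t_0$, on which the same computation gives $|\tilde S_tg(x)-\mu(g)|\le(1+K)(1+V(x))\|g-\mu(g)\|_\star$ — produces exactly the asserted inequality.

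I expect the only genuinely delicate point to be the bookkeeping in invoking Harris' theorem: checking that (V1)--(V2) are precisely its hypotheses for the sampled chain $P=\tilde S_{t_0}$ (in particular that one sublevel set of $V$, taken large enough relative to $K/(1-\alpha)$, serves as the small set) and that the invariant probability it delivers is the given $\mu$. Once this is settled, the discrete-to-continuous passage is routine, and the finiteness $\mu(g)<+\infty$ falls out of $\mu(V)<+\infty$.
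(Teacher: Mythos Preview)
Your proposal is correct and follows essentially the same approach as the paper: the paper's proof simply states that the result is ``a careful combination'' of Harris' theorem as presented in Hairer's notes \cite{hairer2016} and of the results in Meyn--Tweedie \cite{meyn1993}, notes the elementary bound $\mu(g)\le\|\tfrac{g}{1+V}\|_\infty(1+\mu(V))$, and explicitly omits all further details. You have supplied precisely those details---verifying that (V1)--(V2) feed into the Hairer--Mattingly hypotheses for the sampled chain $\tilde S_{t_0}$, identifying the resulting invariant measure with $\mu$, and carrying out the routine discrete-to-continuous upgrade---so your argument is a faithful (and more explicit) rendering of what the paper has in mind.
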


\begin{proof} This result is a careful combination of \cite[Theorem 3.6]{hairer2016} and \mbox{\cite[Theorem 4.3, Theorem 6.1]{meyn1993},} using the fact that since 
	$$
	\mu(g) \leq \left\| \frac{g}{1+V}\right\|_\infty \mu(1+V) = \left\| \frac{g}{1+V}\right\|_\infty (1+ \mu(V))
	$$ it suffices to check that $\mu(V) < +\infty$ to see that $\mu(g) < +\infty$ for any function $g$ as in the statement. We omit the details.
\end{proof}

Observe that if $X$ admits a Lyapunov functional $V$ satisfying (V4) then, by Proposition \ref{prop:Lyapunov}, \mbox{if we set}
$$
\mathcal{C}_X:=\left\{ B \in \B_J : \left\|\frac{\mathbbm{1}_B}{h}\right\|_\infty <+\infty\right\}
$$ then for any $B \in \mathcal{C}_X$ and $x \in J$ we have that 
$$
|s_B(x,t)|=\left|\tilde{\E}_x\left(\frac{\mathbbm{1}_B}{h}(X_t)\right) - \mu\left(\frac{\mathbbm{1}_B}{h}\right)\right| \leq 2C(1+V(x))e^{-\gamma t}\left\|\frac{\mathbbm{1}_B}{h}\right\|_\infty
$$ so that if (V3) holds then (A2-iii) is automatically satisfied. Furthermore, we always have that
$$
s_B(x,t) \leq \tilde{\E}_x\left(\frac{\mathbbm{1}_B}{h}(X_t)\right) \leq \left\|\frac{\mathbbm{1}_B}{h}\right\|_\infty < +\infty
$$ for any such $B$, so that (A2-iv) is also satisfied. Since we have already seen that \eqref{A2} and (A2-i-ii) hold whenever $X$ is $\lambda$-positive, this shows that (A2) is satisfied. On the other hand, if (V4) holds then again by Proposition \ref{prop:Lyapunov} we have that for any $x \in J$ and $t \geq 0$ 
\begin{equation}\label{eq:cotam}
\E_x(M_t^2) = \frac{e^{\lambda t}}{h(x)} \tilde{\E}_x(h(X_t)) \leq \frac{e^{\lambda t}}{h(x)} \left( \mu(h) + 2C(1+V(x))\left( \left\|\frac{h}{1+V}\right\|_\infty + \mu(h) \right)\right) < +\infty
\end{equation} so that (A1) is also satisfied. Finally, since from \eqref{eq:cotam} we can obtain in fact the bound
$$
\E_x(M_t^2) \leq C_{h,\mu}\frac{e^{\lambda t}}{h(x)} (1+V(x))
$$ for some constant $C_{h,\mu} > 0$, a straightforward computation shows that
$$
\Phi_x \leq \frac{m_2-m_1}{r(m_1-1)-\lambda} C_{h,\mu} \cdot \frac{1+V(x)}{h(x)}.
$$ In particular, this yields that (B1) immediately holds. Furthermore, since for any given $B \in \B_J$ we have that $B \cap J_n \in \mathcal{C}_X$ for all $n$, we obtain also (B2) by taking $B^*=B \cap J_n$ for $n$ large enough so as to guarantee that $\nu(B\cap J_n) \geq \frac{\nu(B)}{2}$. This concludes the proof of Proposition \ref{prop:lyapunov1}.


\section*{Acknowledgements:}
Matthieu Jonckheere would like to warmly thank Elie A\"idekon, Julien Beresticky, Simon Harris, Pablo Groisman and Pascal Maillard for many fruitful discussions on branching dynamics. 

Santiago Saglietti would like to also thank Maria Eul\'alia Vares for useful suggestions on how to improve this manuscript and Bastien Mallein for very helpful discussions on these topics.

\bibliographystyle{plain}
\bibliography{biblio4}

\begin{thebibliography}{10}

\bibitem{andjel2015}
E.~Andjel, F.~Ezanno, P.~Groisman, and L.~T. Rolla.
\newblock Subcritical contact process seen from the edge: convergence to
  quasi-equilibrium.
\newblock {\em Electron. J. Probab.}, 20:no. 32, 16, 2015.

\bibitem{asmussen1976}
S.~Asmussen and H.~Hering.
\newblock Strong limit theorems for general supercritical branching processes
  with applications to branching diffusions.
\newblock {\em Z. Wahrscheinlichkeitstheorie und Verw. Gebiete},
  36(3):195--212, 1976.

\bibitem{AFG}
A.~Asselah, P.A. Ferrari, and P.~Groisman.
\newblock {Quasistationary distributions and {F}leming-{V}iot processes in
  finite spaces}.
\newblock {\em J. Appl. Probab.}, 48(2):322--332, 2011.

\bibitem{AFGJ}
A.~Asselah, P.A. Ferrari, P.~Groisman, and M.~Jonckheere.
\newblock {Fleming-{V}iot selects the minimal {QSD}: The {G}alton-{W}atson
  case}.
\newblock {\em Ann. Inst. H. Poincaré}, 52-2:647--668, 2016.

\bibitem{athreya2}
K.~Athreya.
\newblock Limit theorems for multitype continuous time {M}arkov branching
  processes. {II}. {T}he case of an arbitrary linear functional.
\newblock {\em Z. Wahrscheinlichkeitstheorie und Verw. Gebiete}, 13:204--214,
  1969.

\bibitem{BG}
J.~B{\'e}rard and J.B. Gou{\'e}r{\'e}.
\newblock {Brunet-{D}errida behavior of branching-selection particle systems on
  the line}.
\newblock {\em Comm. Math. Phys.}, 298(2):323--342, 2010.

\bibitem{bertacchi2014}
D.~Bertacchi and F.~Zucca.
\newblock Strong local survival of branching random walks is not monotone.
\newblock {\em Adv. in Appl. Probab.}, 46(2):400--421, 2014.

\bibitem{BezGri1990}
C.~Bezuidenhout and G.~Grimmett.
\newblock The critical contact process dies out.
\newblock {\em Ann. Probab.}, 18(4):1462--1482, 1990.

\bibitem{BD2}
E.~Brunet and B.~Derrida.
\newblock {Shift in the velocity of a front due to a cutoff}.
\newblock {\em Phys. Rev. E (3)}, 56(3, part A):2597--2604, 1997.

\bibitem{BD}
{\'E}.~Brunet and B.~Derrida.
\newblock {Effect of microscopic noise on front propagation}.
\newblock {\em J. Statist. Phys.}, 103(1-2):269--282, 2001.

\bibitem{BHM}
K.~Burdzy, R.~Ho{l}yst, and P.~March.
\newblock {A {F}leming-{V}iot particle representation of the {D}irichlet
  {L}aplacian}.
\newblock {\em Comm. Math. Phys.}, 214(3):679--703, 2000.

\bibitem{Chen2015}
Z.-Q. Chen, Y.-X. Ren, R.~Song, and R.~Zhang.
\newblock Strong law of large numbers for supercritical superprocesses under
  second moment condition.
\newblock {\em Front. Math. China}, 10(4):807--838, 2015.

\bibitem{ChenRenWang2008}
Z.-Q. Chen, Y.-X. Ren, and Y.-X. Wang.
\newblock An almost sure scaling limit theorem for {D}awson-{W}atanabe
  superprocesses.
\newblock {\em J. Funct. Anal.}, 254(7):1988--2019, 2008.

\bibitem{chen2016}
Z.-Q. Chen, Y.-X. Ren, and T.~Yang.
\newblock Law of large numbers for branching symmetric hunt processes with
  measure-valued branching rates.
\newblock {\em Journal of Theoretical Probability}, pages 1--34, 2016.

\bibitem{ChenShio2007}
Z.-Q. Chen and Y.~Shiozawa.
\newblock Limit theorems for branching {M}arkov processes.
\newblock {\em J. Funct. Anal.}, 250(2):374--399, 2007.

\bibitem{DR}
R.~Durrett and D.~Remenik.
\newblock {Brunet-derrida particle systems, free boundary problems and
  Wiener-Hopf equations.}
\newblock {\em Ann. Probab.}, 39(6):2043--2078, 2011.

\bibitem{englandersurvey}
J.~Engl\"ander.
\newblock Branching diffusions, superdiffusions and random media.
\newblock {\em Probab. Surv.}, 4:303--364, 2007.

\bibitem{englander2009law}
J.~Engl\"ander.
\newblock Law of large numbers for superdiffusions: the non-ergodic case.
\newblock {\em Ann. Inst. Henri Poincar\'e Probab. Stat.}, 45(1):1--6, 2009.

\bibitem{englander2014spatial}
J.~Engl\"ander.
\newblock {\em Spatial branching in random environments and with interaction},
  volume~20 of {\em Advanced Series on Statistical Science \& Applied
  Probability}.
\newblock World Scientific Publishing Co. Pte. Ltd., Hackensack, NJ, 2015.

\bibitem{englander2010}
J.~Engl\"ander, S.~C. Harris, and A.~E. Kyprianou.
\newblock Strong law of large numbers for branching diffusions.
\newblock {\em Ann. Inst. Henri Poincar\'e Probab. Stat.}, 46(1):279--298,
  2010.

\bibitem{EngKyp2004}
J.~Engl\"ander and A.~E. Kyprianou.
\newblock Local extinction versus local exponential growth for spatial
  branching processes.
\newblock {\em Ann. Probab.}, 32(1A):78--99, 2004.

\bibitem{EngTur2002}
J.~Engl\"ander and D.~Turaev.
\newblock A scaling limit theorem for a class of superdiffusions.
\newblock {\em Ann. Probab.}, 30(2):683--722, 2002.

\bibitem{englander2006law}
J.~Engl{\"a}nder and A.~Winter.
\newblock Law of large numbers for a class of superdiffusions.
\newblock In {\em Annales de l'Institut Henri Poincare (B) Probability and
  Statistics}, volume~42, pages 171--185. Elsevier, 2006.

\bibitem{ferrari1996}
P.~A. Ferrari, H.~Kesten, and S.~Mart{\'\i}nez.
\newblock {$R$}-positivity, quasi-stationary distributions and ratio limit
  theorems for a class of probabilistic automata.
\newblock {\em Ann. Appl. Probab.}, 6(2):577--616, 1996.

\bibitem{FM}
P.A. Ferrari and N.~Mari{\'c}.
\newblock {Quasi stationary distributions and {F}leming-{V}iot processes in
  countable spaces}.
\newblock {\em Electron. J. Probab.}, 12:no. 24, 684--702 (electronic), 2007.

\bibitem{GJ}
P.~Groisman and M.~Jonckheere.
\newblock Simulation of quasi-stationary distributions on countable spaces.
\newblock {\em Markov Processes and Related Fields}, 19-3:521--542, 2013.

\bibitem{hairer2016}
M.~Hairer.
\newblock {\em Convergence of Markov processes (minicourse notes)}.
\newblock 2016.

\bibitem{harris2006v1}
J.~W. Harris, S.~C. Harris, and A.~E. Kyprianou.
\newblock Further probabilistic analysis of the
  {F}isher-{K}olmogorov-{P}etrovskii-{P}iscounov equation: one sided
  travelling-waves.
\newblock {\em Ann. Inst. H. Poincar\'e Probab. Statist.}, 42(1):125--145,
  2006.

\bibitem{harris2009}
S.~C. Harris and M.~I. Roberts.
\newblock Measure changes with extinction.
\newblock {\em Statist. Probab. Lett.}, 79(8):1129--1133, 2009.

\bibitem{harris2014}
S.~C. Harris and M.~I. Roberts.
\newblock A strong law of large numbers for branching processes: almost sure
  spine events.
\newblock {\em Electron. Commun. Probab.}, 19:no. 28, 6, 2014.

\bibitem{harris2015}
S.~C. Harris and M.~I. Roberts.
\newblock The many-to-few lemma and multiple spines.
\newblock {\em Ann. Inst. Henri Poincar\'e Probab. Stat.}, 53(1):226--242,
  2017.

\bibitem{JonckSag}
M.~Jonckheere and S.~Saglietti.
\newblock On the {K}esten-{S}tigum theorem in ${L}^2$ beyond
  $\lambda$-positivity.
\newblock {\em arXiv preprint. arXiv:1701.07634}, 2017.

\bibitem{kesten1978}
H.~Kesten.
\newblock Branching {B}rownian motion with absorption.
\newblock {\em Stochastic Processes Appl.}, 7(1):9--47, 1978.

\bibitem{kurtz1997}
T.~Kurtz, R.~Lyons, R.~Pemantle, and Y.~Peres.
\newblock A conceptual proof of the {K}esten-{S}tigum theorem for multi-type
  branching processes.
\newblock In {\em Classical and modern branching processes ({M}inneapolis,
  {MN}, 1994)}, volume~84 of {\em IMA Vol. Math. Appl.}, pages 181--185.
  Springer, New York, 1997.

\bibitem{KP1}
A.~E. Kyprianou and Z.~Palmowski.
\newblock Quasi-stationary distributions for {L}\'evy processes.
\newblock {\em Bernoulli}, 12(4):571--581, 2006.

\bibitem{LiuRenSong2013}
R.-L. Liu, Y.-X. Ren, and R.~Song.
\newblock Strong law of large numbers for a class of superdiffusions.
\newblock {\em Acta Appl. Math.}, 123:73--97, 2013.

\bibitem{lladser2000}
M.~Lladser and J.~San~Mart{\'\i}n.
\newblock Domain of attraction of the quasi-stationary distributions for the
  {O}rnstein-{U}hlenbeck process.
\newblock {\em J. Appl. Probab.}, 37(2):511--520, 2000.

\bibitem{louidor2017strong}
O.~Louidor and S.~Saglietti.
\newblock A strong law of large numbers for super-critical branching brownian
  motion with absorption.
\newblock {\em arXiv preprint. arXiv:1708.08440}, 2017.

\bibitem{M}
P.~Maillard.
\newblock {Branching Brownian motion with selection of the N right-most
  particles: An approximate model}, February 2012.

\bibitem{meyn1993}
S.~P. Meyn and R.~L. Tweedie.
\newblock Stability of {M}arkovian processes. {III}. {F}oster-{L}yapunov
  criteria for continuous-time processes.
\newblock {\em Adv. in Appl. Probab.}, 25(3):518--548, 1993.

\bibitem{moy1967}
S.~C. Moy.
\newblock Extensions of a limit theorem of {E}verett, {U}lam and {H}arris on
  multitype branching processes to a branching process with countably many
  types.
\newblock {\em Ann. Math. Statist.}, 38:992--999, 1967.

\bibitem{polak2012}
M.~Polak and T.~Rolski.
\newblock A note on speed of convergence to the quasi-stationary distribution.
\newblock {\em Demonstratio Math.}, 45(2):385--397, 2012.

\bibitem{seneta2006}
E.~Seneta.
\newblock {\em Non-negative matrices and {M}arkov chains}.
\newblock Springer Series in Statistics. Springer, New York, 2006.
\newblock Revised reprint of the second (1981) edition [Springer-Verlag, New
  York; MR0719544].

\bibitem{V}
D.~Villemonais.
\newblock Interacting particle systems and yaglom limit approximation of
  diffusions with unbounded drift.
\newblock {\em Electronic Journal of Probability}, 16, 2010.

\bibitem{watanabe1967}
S.~Watanabe.
\newblock Limit theorem for a class of branching processes.
\newblock In {\em Markov {P}rocesses and {P}otential {T}heory ({P}roc.
  {S}ympos. {M}ath. {R}es. {C}enter, {M}adison, {W}is., 1967)}, pages 205--232.
  Wiley, New York, 1967.

\end{thebibliography}
 
\end{document}